\newtheorem{thm}{Theorem}[section]
\newtheorem{lem}[thm]{Lemma}
\newtheorem{cor}[thm]{Corollary}
\newtheorem{prop}[thm]{Proposition}
\newtheorem{ques}[thm]{Question}
\theoremstyle{definition}
\newtheorem{exmp}[thm]{Example}
\newtheorem{defn}[thm]{Definition}
\newtheorem{deflem}[thm]{Definition-Lemma}
\newtheorem{defprop}[thm]{Definition-Proposition}
\theoremstyle{remark}
\newtheorem{rem}[thm]{Remark}
\newcommand{\Z}{{\mathbb{Z}}}
\newcommand{\Ext}{\operatorname{Ext}}
\newcommand{\Hom}{\operatorname{Hom}}
\newcommand{\Endhom}{\operatorname{End}}
\newcommand{\add}{\mathrm{add}\; }
\newcommand{\ind}{\mathrm{ind}\text{-} }
\newcommand{\modu}{\mathrm{mod}\text{-} }
\newcommand{\tiltq}{\overrightarrow{\mathcal{T}}}
\newcommand{\ptiltq}{\overrightarrow{\mathcal{T}_{\mathrm{p}}}}
\begin{document}

\title[Distributive lattices and the poset of pre-projective tilting modules]{Distributive lattices and the poset of pre-projective tilting modules}
\author{Ryoichi Kase}
\address{Department of Pure and Applied Mathematics
Graduate School of Information Science and Technology
,Osaka University, Toyonaka, Osaka 560-0043, Japan}
\email{r-kase@cr.math.sci.osaka-u.ac.jp}
\date{}

\maketitle
\footnote[0]{ 
2000 \textit{Mathematics Subject Classification}.
Primary 16G20; Secondary 16D80.
}
\footnote[0]{ %key words and phrases
\textit{Key words and phrases}. 
Tilting modules, representations of quivers, distributive lattices.
}
\begin{abstract}
D.Happel and L.Unger defined a partial order on the set of basic tilting modules. We study the poset of basic pre-projective tilting modules over path algebra of infinite type. We give an
equivalent condition for that this poset is a distributive lattice.
We also give an equivalent condition for that a distributive lattice is isomorphic to the poset of  basic pre-projective tilting modules over path algebra of infinite type.

\end{abstract}
\section*{Introduction}
Tilting theory first appeared in the article by Brenner and Butler\;\cite{BB}. In this article the notion of a tilting module for finite dimensional algebra was introduced. Tilting theory  now appear  in many areas of mathematics, for example algebraic geometry, theory of algebraic groups and algebraic topology. Let $T$ be a tilting module for finite dimensional algebra $A$ and let $B=\Endhom_{A}(T)$. Then Happel showed that the two bounded derived categories
$\mathcal{D}^{\mathrm{b}}(A)$ and $\mathcal{D}^{\mathrm{b}}(B)$
are equivalent as triangulated category.  Therefore classifying tilting modules is an important problem.
 
  Theory of tilting-mutation introduced by Riedtmann and Schofield is one of the approach to this problem. Riedtmann and Schofield defined the tilting quiver related with tilting-mutation.  Happel and Unger defined the partial order on the set of basic tilting modules  and showed that tilting quiver is coincided with Hasse quiver of this poset. These combinatorial structure are now studied by many authors. 
  
  \subsection*{notations} Let $Q$ be a finite connected quiver without loops or oriented cycles. We denote by $Q_{0}$\;(resp.\;$Q_{1}$) the set of vertices\;(resp.\;arrows) of $Q$. For any arrow $\alpha\in Q_{1}$ we denote by $s(\alpha)$ its starting point and denote by $t(\alpha)$ its target point\;(i.e.\;$\alpha$ is an arrow from $s(\alpha)$ to $t(\alpha)$).   Let $kQ$ be the path algebra of $Q$ over an algebraically closed field $k$. Denote by $\modu kQ$ the category of finite dimensional right $kQ$ modules and by $\ind kQ$ the category of
 indecomposable modules in $\modu kQ$. For any module $M\in \modu kQ$ we denote by $|M|$ the number of
 pairwise non isomorphic indecomposable direct summands of $M$.       For any paths $w:a_{0}\stackrel{\alpha_{1}}{\rightarrow}a_{1}\stackrel{\alpha_{2}}{\rightarrow}\cdots \stackrel{\alpha_{r}}{\rightarrow} a_{r}$ and $w^{'}:b_{0}\stackrel{\beta_{1}}{\rightarrow}b_{1}\stackrel{\beta_{2}}{\rightarrow}\cdots \stackrel{\beta_{s}}{\rightarrow} b_{s}$, \[w\cdot w^{'}:=\left\{\begin{array}{ll}
 a_{0}\stackrel{\alpha_{1}}{\rightarrow}a_{1}\stackrel{\alpha_{2}}{\rightarrow}\cdots \stackrel{\alpha_{r}}{\rightarrow} a_{r}=b_{0}\stackrel{\beta_{1}}{\rightarrow}b_{1}\stackrel{\beta_{2}}{\rightarrow}\cdots \stackrel{\beta_{s}}{\rightarrow} b_{s} & \mathrm{if\ }a_{r}=b_{0} \\ 
 0 & \mathrm{if\ }a_{r}\neq b_{0},
 \end{array}\right.\]
 in $kQ$.  Let $P(i)$ be an indecomposable projective module in $\modu kQ$  associated with vertex $i\in Q_{0}$. 
 
 In this paper we will consider the set $\mathcal{T}_{\mathrm{p}}(Q)$ of basic pre-projective tilting modules  and study its combinatorial structure. In\;\cite{K}  we showed following:
 
 \begin{thm}
 If $Q$ satisfies the following condition $(\mathrm{C})$,
 \[(\mathrm{C})\ \  \delta(a):=\#\{\alpha\in Q_{1}\mid s(\alpha)=a\ \mathrm{or\ }t(\alpha)=a\}\geq 2\ \forall a\in Q_{0},\]
 then for any $T\in \mathcal{T}_{\mathrm{p}}$ there exists $(r_{i})_{i\in Q_{0}}\in \mathbb{Z}_{\geq 0}^{Q_{0}}$ such that $T\simeq \oplus_{i\in Q_{0}}\tau_{Q}^{-r_{i}}P(i).$

 Moreover $\oplus_{i\in Q_{0}}\tau^{-r_{i}}P(i)\mapsto (r_{i})_{i\in Q_{0}}$ induces a poset inclusion,
 \[(\mathcal{T}_{\mathrm{p}}(Q),\leq) \rightarrow (\mathbb{Z}^{Q_{0}},\leq^{\mathrm{op}}), \]
 where $(r_{i})\leq^{\mathrm{op}}(s_{i})\stackrel{\mathrm{def}}{\Leftrightarrow} r_{i}\geq s_{i}$ for any $i\in Q_{0}$.

 \end{thm}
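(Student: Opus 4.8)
The plan is to prove the two assertions in turn, with essentially all of the work concentrated in a single Hom-nonvanishing statement inside the pre-projective component. First observe that condition $(\mathrm{C})$ forces $Q$ to be of infinite representation type: every Dynkin diagram has a vertex of degree $1$, so $\delta(a)\ge 2$ for all $a$ rules out the Dynkin case. Consequently no indecomposable pre-projective module is injective, the modules $\tau_Q^{-r}P(i)$ (for $i\in Q_{0}$, $r\ge 0$) are pairwise non-isomorphic, and they exhaust the indecomposable pre-projective modules. Thus any $T\in\mathcal{T}_{\mathrm p}$ may be written $T\simeq\bigoplus_{k=1}^{n}\tau_Q^{-r_k}P(i_k)$ with $n=|Q_{0}|=|T|$, and the content of the first assertion is precisely that the multiset $\{i_1,\dots,i_n\}$ equals $Q_{0}$, i.e.\ each vertex occurs exactly once.

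The key technical claim is that
\[ \Hom(\tau_Q^{-a}P(j),\tau_Q^{-c}P(j))\neq 0\qquad (0\le a\le c,\ j\in Q_{0}). \]
Granting this, the first assertion follows from rigidity. Since $kQ$ is hereditary, the Auslander--Reiten formula gives $\Ext^1(X,Y)\cong D\Hom(Y,\tau_Q X)$; applying it to two hypothetical summands $\tau_Q^{-b}P(j)$ and $\tau_Q^{-a}P(j)$ of $T$ with $a<b$ yields $\Ext^1(\tau_Q^{-b}P(j),\tau_Q^{-a}P(j))\cong D\Hom(\tau_Q^{-a}P(j),\tau_Q^{-(b-1)}P(j))$, which is nonzero by the claim since $a\le b-1$. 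This contradicts $\Ext^1(T,T)=0$, so no vertex is repeated; as there are $n$ summands and $n$ vertices, $k\mapsto i_k$ is a bijection onto $Q_{0}$, producing the required family $(r_i)_{i\in Q_{0}}$. To prove the claim I would reduce, using the $\tau$-invariance of Hom-dimensions inside the pre-projective component (legitimate here as all modules involved are non-injective and only $P(j)$ is projective), to $\Hom(P(j),\tau_Q^{-(c-a)}P(j))$, a space whose dimension is the $j$-th coordinate of $\dimvec\tau_Q^{-(c-a)}P(j)=\Phi^{-(c-a)}\dimvec P(j)$, where $\Phi$ is the Coxeter matrix.

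Everything therefore comes down to showing that the $j$-th coordinate of $\dimvec\tau_Q^{-s}P(j)$ is positive for every $s\ge 0$, and this is the step where I expect the genuine difficulty and where condition $(\mathrm{C})$ is indispensable: at a vertex of degree one the orbit $(\tau_Q^{-s}P(j))_{s}$ can vacate its own vertex, its $j$-th coordinate dropping to $0$, which would simultaneously break the claim and, by the computation above, permit a rigid pair supported twice over $j$ --- exactly the failure mode of the first assertion. Under $(\mathrm{C})$ I would establish the positivity by an induction on $s$ through the mesh relations (knitting the component), handling the tame case $\widetilde{A}_n$ via the explicit linear growth of $\Phi^{-s}$ and the wild case via Perron--Frobenius positivity of the dominant eigenvector of $\Phi^{-1}$.

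For the second assertion, injectivity of $T\mapsto (r_i)$ is immediate from the first, since the tuple recovers every indecomposable summand $\tau_Q^{-r_i}P(i)$ and hence $T$. For monotonicity I would use the torsion-class description of the Happel--Unger order, $T\le S\iff \operatorname{Gen}T\subseteq\operatorname{Gen}S$ (so that $kQ$ is the maximum, matching $(0,\dots,0)$ as the $\le^{\mathrm{op}}$-maximum), together with the formula
\[ r_i^{T}=\min\{\,r\ge 0:\ \tau_Q^{-r}P(i)\in\operatorname{Gen}T\,\}. \]
Indeed $\tau_Q^{-r_i}P(i)\in\add T\subseteq\operatorname{Gen}T$, while for $r<r_i$ the Auslander--Reiten formula applied to the summand $\tau_Q^{-r_i}P(i)$ gives $\Ext^1(\tau_Q^{-r_i}P(i),\tau_Q^{-r}P(i))\cong D\Hom(\tau_Q^{-r}P(i),\tau_Q^{-(r_i-1)}P(i))\neq 0$ by the same claim, so $\tau_Q^{-r}P(i)\notin\operatorname{Gen}T$; this verifies the displayed formula. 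Since $T\le S$ gives $\operatorname{Gen}T\subseteq\operatorname{Gen}S$, the defining set for $S$ contains that for $T$, so its minimum is no larger, i.e.\ $r_i^{T}\ge r_i^{S}$ for all $i$, which is exactly $(r_i^T)\le^{\mathrm{op}}(r_i^S)$. Hence the assignment is an order-preserving injection, as claimed, and the one nontrivial ingredient throughout remains the Hom-nonvanishing claim --- equivalently the coordinate-positivity of $\dimvec\tau_Q^{-s}P(j)$.
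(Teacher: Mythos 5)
Your reduction of the first assertion is correct and is essentially the route the paper (via \cite{K}) takes: your key Hom-nonvanishing claim is exactly what the $(\Rightarrow)$ direction of Proposition~\ref{p} provides when $i=j$, and your rigidity/pigeonhole argument then recovers Theorem~\ref{t0}(1). The problem is that the key claim itself---the only point where condition $(\mathrm{C})$ does any work---is left unproven, and the tools you propose cannot prove it. Perron--Frobenius positivity of the dominant eigenvector of the Coxeter matrix holds for \emph{every} connected wild quiver, including wild quivers having a vertex of degree one; but for any quiver with a degree-one source $s$ one has $\Hom_{kQ}(P(s),\tau^{-1}P(s))=0$ (this is precisely the computation in case (a) of the proof of Theorem~\ref{t1}, which produces a tilting module $P(s)\oplus\tau^{-2}P(s)\oplus C$ containing two summands from one $\tau$-orbit), so the claim is false for such quivers. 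Hence a spectral argument, which never sees $(\mathrm{C})$ and in any case only controls $\dimvec\,\tau^{-s}P(j)=\Phi^{-s}\dimvec\,P(j)$ for $s\gg 0$, cannot suffice: the failure occurs at small $s$. Similarly, ``induction on $s$ through the mesh relations'' does not close as stated, because the mesh recursion (the display before Lemma~\ref{l}) expresses each new dimension vector as a sum over neighbours \emph{minus} the previous one, so positivity of the inputs does not give positivity of the output. What makes the induction work is the arrow-monotonicity of Lemma~\ref{l}, $d(\tau^{-r}P(y),M)\le d(\tau^{-r}P(x),M)\le d(\tau^{-r-1}P(y),M)$, in whose proof $\delta\ge 2$ enters concretely; your sketch contains no substitute for it.

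Second, the theorem asserts a \emph{poset inclusion}, which in this paper is defined to be an order-preserving map that also reflects the order: $\phi(T)\le^{\mathrm{op}}\phi(S)$ must imply $T\le S$. You prove order-preservation (your torsion-class formula $r_{i}^{T}=\mathrm{min}\{r\ge 0\mid \tau^{-r}P(i)\in \mathrm{Gen}\,T\}$ is correct, granted the key claim, and is a nice alternative to the paper's treatment) and injectivity, but an injective order-preserving map need not reflect order, and reflection is what the paper relies on later (for instance in the proof of Theorem~\ref{t1}). Reflection says: if $r_{i}^{T}\ge r_{i}^{S}$ for all $i$, then $\Ext^{1}_{kQ}(S,T)=0$; the natural proof combines rigidity of $S$ (which gives $r_{i}^{S}\le r_{j}^{S}+l_{Q}(j,i)$) with the vanishing criterion of Corollary~\ref{c1}, namely $r\le s+l_{Q}(j,i)\Rightarrow \Ext^{1}_{kQ}(\tau^{-r}P(i),\tau^{-s}P(j))=0$. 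That criterion concerns distinct $\tau$-orbits and is logically independent of your single-orbit nonvanishing claim, so your closing statement that the Hom-nonvanishing claim is ``the one nontrivial ingredient throughout'' is not correct: a second ingredient (the $(\Leftarrow)$ direction of Proposition~\ref{p}) is indispensable and is missing from your proposal.
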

 
  One of the result of this paper is that $Q$ satisfies the condition  $(\mathrm{C})$ if and only if $(\mathcal{T}_{\mathrm{p}}(Q),\leq)$ is a distributive lattice. We note that under the condition $(\mathrm{C})$ the poset $(\mathcal{T}_{\mathrm{p}}(Q),\leq)$
  has  inner poset inclusion $\tau^{-1}_{Q}$.  
 \begin{ques}
 \label{q1} 
 Let $L$ be a distributive lattice equipped with inner poset inclusion
 $\tau^{-1}$. When $(L,\tau^{-1})\simeq (\mathcal{T}_{\mathrm{p}}(Q),\tau_{Q}^{-1})$ for some $Q$?
 \end{ques} 
 
 As the goal of this paper we will give an answer of this question.
 Moreover we will construct a quiver $Q$  satisfying $(L,\tau^{-1})\simeq (\mathcal{T}_{\mathrm{p}}(Q),\tau^{-1}_{Q})$. 

 We now give an outline of this paper.  
 
 In Section\;1 we recall definitions of tilting modules, tilting quivers, lattices and distributive lattices.
 
 In Section\;2 we define the pre-projective part of tilting quiver and recall results of \cite{K}. 
 
 In Section\;3 we first show that $Q$ satisfies the condition $(\mathrm{C})$ if and only if $\mathcal{T}_{\mathrm{p}}(Q)$ is an infinite distributive lattice.  Next we give an answer of Question\;\ref{q1}.
 
\section*{Acknowledgement}
The author would like to express his gratitude to Professor Susumu Ariki for his mathematical supports and warm encouragements.

\section{Preliminary}

\subsection{Tilting modules}
In this sub-section we will recall the definition of tilting modules and basic results for combinatorics of the set of tilting modules.

\begin{defn}
A module  $T\in \modu kQ$ is tilting module if,\\
$(1)$\;$\Ext^{1}_{kQ}(T,T)=0$,\\
$(2)$\;$|T|=\#Q_{0}.$ 
 
\end{defn}

\begin{rem}
In general, a module $T$ over a finite dimensional algebra $A$ is called a tilting module if (1)\;its projective dimension
is finite, (2)\;$\Ext_{A}^{i}(T,T)=0$ for any $i>0$ and (3)\;there is a exact sequence,
\[0\rightarrow A_{A}\rightarrow T_{0}\rightarrow T_{1}\rightarrow\cdots \rightarrow T_{r}\rightarrow 0,\]
with $T_{i}\in \add T$. If $A$ is hereditary, then it is well-known that this definition is equivalent to our definition. 
 
\end{rem}

We denote by $\mathcal{T}(Q)$ the set of (isomorphism classes of) basic tilting modules in $\modu kQ$.

\begin{defprop}\cite[Lemma\;2.1]{HU2} Let $T,T^{'}\in \mathcal{T}(Q)$. Then the following relation $\leq$ define  a partial order on $\mathcal{T}(Q)$,
\[T\geq T^{'}\stackrel{\mathrm{def}}{\Leftrightarrow}\Ext^{1}_{kQ}(T,T^{'})=0.\]
\end{defprop}

\begin{defn}
The tilting quiver $\tiltq (Q)$ is defined as follows:\\
(1)\; $\tiltq (Q)_{0}:=\mathcal{T}(Q)$,\\
(2)\;$T\rightarrow T^{'}$ in $\tiltq (Q)$ if  $T\simeq M\oplus X$
, $T^{'}\simeq M\oplus Y$ for some $X,Y\in \ind kQ$, $M\in \modu kQ$ and there is a non split exact sequence,
\[0\rightarrow X \rightarrow M^{'}\rightarrow Y\rightarrow 0, \]
with $M^{'}\in \add M$.

\end{defn}

\begin{thm}\cite[Theorem\;2.1]{HU1}
The tilting quiver $\tiltq (Q)$ is coincided with the $\mathrm{Hasse}$-quiver
of $(\mathcal{T}(Q),\leq)$.
\end{thm}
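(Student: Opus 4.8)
The plan is to prove the two inclusions of arrow sets separately. Recall that the Hasse quiver of $(\mathcal{T}(Q),\leq)$ has an arrow $T\to T'$ exactly when $T>T'$ is a covering relation, that is, $T\geq T'$, $T\neq T'$, and there is no $S\in\mathcal{T}(Q)$ with $T>S>T'$. So I must show: (I) every arrow of $\tiltq(Q)$ is a covering relation; and (II) every covering relation is an arrow of $\tiltq(Q)$. Throughout I would use freely that $kQ$ is hereditary, so $\Ext^{2}_{kQ}(-,-)=0$, together with Bongartz's theorem (every rigid module is a direct summand of a tilting module, hence rigid modules have at most $\#Q_{0}$ non-isomorphic indecomposable summands, and those with exactly $\#Q_{0}$ are tilting) and the exchange theorem for hereditary algebras (an almost complete tilting module, i.e.\ a rigid $M$ with $|M|=\#Q_{0}-1$, has one or two complements, and two complements $X,Y$ are joined by a unique non-split short exact sequence in one of the two directions).

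For (I), suppose $T\to T'$ is an arrow, say $T\simeq M\oplus X$, $T'\simeq M\oplus Y$ with a non-split sequence $0\to X\to M'\to Y\to 0$ and $M'\in\add M$. First, $X\not\simeq Y$: the sequence is a nonzero class in $\Ext^{1}_{kQ}(Y,X)$, while $\Ext^{1}_{kQ}(X,X)=0$ since $X$ is a summand of the tilting module $T$; hence $T\neq T'$. Next I would apply $\Hom_{kQ}(M,-)$ and $\Hom_{kQ}(X,-)$ to the exchange sequence. As $M'\in\add M$ and $M\oplus X$ is tilting, $\Ext^{1}_{kQ}(M,M')=\Ext^{1}_{kQ}(X,M')=0$, and $\Ext^{2}_{kQ}=0$; the long exact sequences then give $\Ext^{1}_{kQ}(M,Y)=\Ext^{1}_{kQ}(X,Y)=0$. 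Combined with the vanishing coming from $T$ and $M$ being partial tilting, this yields $\Ext^{1}_{kQ}(M\oplus X,M\oplus Y)=0$, so $T>T'$. Finally I would check it is a cover: if $T\geq S\geq T'$ then $\Ext^{1}_{kQ}(M,S)=\Ext^{1}_{kQ}(S,M)=0$, so $M\oplus S$ is rigid; since $S$ is tilting and hence a maximal rigid module, every indecomposable summand of $M$ already lies in $\add S$, forcing $S\simeq M\oplus Z$ for a single indecomposable $Z$. Then $Z$ is a complement of the almost complete tilting module $M$, and since its only complements are $X$ and $Y$ we get $S\in\{T,T'\}$; hence $T\to T'$ is a covering relation.

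For (II) the main step is a descent lemma: if $T>T'$ then there is an arrow $T\to T''$ in $\tiltq(Q)$ with $T\geq T''\geq T'$. Granting this, let $T>T'$ be any covering relation; the lemma gives an arrow $T\to T''$, which by (I) satisfies $T>T''\geq T'$, and the covering hypothesis forces $T''=T'$, so $T\to T'$ is an arrow of $\tiltq(Q)$. To prove the descent lemma I would start from $T\neq T'$: antisymmetry of $\leq$ together with $\Ext^{1}_{kQ}(T,T')=0$ gives $\Ext^{1}_{kQ}(T',T)\neq 0$, so there are indecomposable summands $Y'$ of $T'$ and $X$ of $T$ with $\Ext^{1}_{kQ}(Y',X)\neq0$; in particular $X\notin\add T'$. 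Setting $M=T/X$, I must show that $M$ is sincere (so it admits a second complement $Y$), that the exchange sequence has $X$ as its sub-object so the resulting arrow points $T\to M\oplus Y$, and that $M\oplus Y\geq T'$.

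The hard part will be exactly this descent lemma, namely choosing the summand $X$ of $T$ to remove so that (a) the almost complete tilting module $T/X$ really has a second complement $Y$ (sincerity can fail for a careless choice of $X$), (b) the unique exchange sequence is oriented with $X$ as submodule, guaranteeing an arrow $T\to M\oplus Y$ rather than its reverse, and (c) the new tilting module still dominates $T'$. I expect to pin down the correct $X$ using the nonvanishing $\Ext^{1}_{kQ}(Y',X)\neq0$ together with a minimality/approximation argument (building the exchange sequence from a minimal right $\add M$-approximation), and to verify $M\oplus Y\geq T'$ by another round of $\Ext$-vanishing computations analogous to those in (I). The two easy directions reduce to homological bookkeeping with $\Ext^{2}_{kQ}=0$; the genuine content is this controlled one-step descent through the tilting quiver.
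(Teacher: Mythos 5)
Your half (I) --- every arrow of $\tiltq(Q)$ is a covering relation --- is essentially complete and correct: the $\Ext$-computations using $\Ext^{2}_{kQ}=0$, and the covering argument via maximal rigidity of tilting modules together with the fact that an almost complete tilting module over a hereditary algebra has at most two complements, are sound (modulo those standard facts, which you cite). Note, though, that the paper itself gives no proof of this statement: it is quoted from [HU1, Theorem 2.1], so the only proof to compare against is Happel--Unger's, whose strategy for the hard direction (reduce ``cover $\Rightarrow$ arrow'' to a one-step descent) is exactly the one you adopt.

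The genuine gap is that your half (II) is a plan, not a proof. You reduce everything to the descent lemma --- if $T>T'$ then there is an arrow $T\to T''$ in $\tiltq(Q)$ with $T''\geq T'$ --- and of the three assertions this lemma needs, you prove none: (a) that the indecomposable summand $X$ of $T=M\oplus X$ you select (with $\Ext^{1}_{kQ}(T',X)\neq 0$) can be chosen so that $M$ is sincere and hence admits a second complement $Y$; (b) that the unique exchange sequence between the two complements is oriented as $0\to X\to M'\to Y\to 0$, so that the resulting arrow points out of $T$ rather than into it; and (c) that $M\oplus Y\geq T'$. You explicitly defer all three (``I expect to pin down the correct $X$\,\dots''), but these are precisely the technical core of Happel--Unger's theorem, the part requiring the complement theory of Coelho--Happel--Unger and Bongartz-type approximation sequences; half (I) by comparison is routine homological bookkeeping. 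In particular, (b) is not automatic even once $M$ is known to be sincere: for a sincere almost complete tilting module exactly one of $\Ext^{1}_{kQ}(X,Y)$, $\Ext^{1}_{kQ}(Y,X)$ is nonzero, and ruling out the wrong orientation requires an argument genuinely combining $\Ext^{1}_{kQ}(T,T')=0$ with $\Ext^{1}_{kQ}(T',X)\neq 0$; likewise (a) requires relating non-sincerity of $M$ (equivalently, uniqueness of its complement) to the hypothesis $T>T'$. Until the descent lemma is actually proved, the ``cover $\Rightarrow$ arrow'' inclusion --- the substantive half of the theorem --- is missing.
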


\begin{rem}
In this paper we define the Hasse-quiver $\overrightarrow{P}$ of  (finite or infinite)  poset $(P,\leq)$ as follows:\\
(1)\;$\overrightarrow{P}_{0}:=P$,\\
(2)\;$x\rightarrow y$ in $\overrightarrow{P}$ if $x>y$ and there is no $z\in P$ such that $x>z>y$.

\end{rem}

\subsection{Lattices and distributive lattices}
In this subsection we will recall definition of a lattice  and  a distributive lattice. 

\begin{defn}
A poset $(L,\leq)$ is a lattice if for any $x,y\in L$ there is the  minimum element of $\{z\in L\mid z\geq x,y\}$ and there is 
the  maximum element of $\{z\in L\mid z\leq x,y\}$. 

In this case we denote by $x\vee y$ the minimum element of $\{z\in L\mid z\geq x,y\}$ and denote by $x\wedge y$ the maximum element
of $\{z\in L\mid z\leq x,y \}$. 
\end{defn}
\begin{defn}
A lattice $L$ is a distributive lattice if $(x\vee y)\wedge z=(x\wedge z)\vee (y\wedge z)$ holds
for any $x,y,z \in L$.  
\end{defn}

\begin{rem}
It is well-known that $L$ is a distributive lattice if and only if
$(x\wedge y)\vee z=(x\vee z)\wedge (y\vee z)$ holds for any $x,y,z \in L$.

\end{rem}

In this paper we use the following notation.

\begin{defn}
Let $(L_{1},\leq_{1})$ and $(L_{2},\leq_{2} )$ are posets and $\phi:L_{1}\rightarrow L_{2}$ be an order preserving map. \\
(1)\;We call $\phi$ a poset inclusion if $\phi(x)\leq_{2}\phi(y)$ implies $x\leq_{1} y$.\\
(2)\;Assume that $L_{1}$ and $L_{2}$ are lattices. We call $\phi$
lattice inclusion if $\phi$ is a poset inclusion and $\phi(x\vee y)=\phi(x)\vee \phi(y)$,\;$\phi(x\wedge y)=\phi(x)\wedge\phi(y)$ holds for any $x,y\in L_{1}$.
\end{defn}

\begin{defn}
Let $L$ be a lattice. We call an element $x\in L$ join-irreducible if $x=y\vee z$ implies either $y=x$ or $z=x$.
\end{defn}

\begin{defn}
Let $P$ be a poset and $I\subset P$. We call $I$ poset-ideal of $P$ if $x\leq y\in I$ implies $x\in I$.

Then we denote by $\mathcal{I}(P)$ the poset $(\{I:\mathrm{poset\text{-}ideal\ of }\ P \},\subset)$ and call it the ideal-poset of $P$.
\end{defn}

\begin{thm}
\label{brt}
$($Birkhoff's representation theorem,\;$\mathrm{c.f.}$\;\cite{B},\;\cite{Gr}$)$
Let $L$ be a finite distributive lattice and $J\subset L$ be the poset of join-irreducible elements of $L$.
Then $L$ is isomorphic to $\mathcal{I}(J)$.

\end{thm}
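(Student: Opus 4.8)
The plan is to exhibit an explicit order isomorphism $\phi\colon L\to\mathcal{I}(J)$. I would define, for $x\in L$,
\[\phi(x):=\{\,j\in J\mid j\leq x\,\}.\]
This is a poset-ideal of $J$, since if $j'\leq j\in\phi(x)$ with $j'\in J$ then $j'\leq j\leq x$, and $\phi$ is visibly order preserving. The whole proof then amounts to showing that $\phi$ is a bijection which also reflects the order; the two facts that drive this are that join-irreducibles generate $L$ under joins (from finiteness) and that they are ``join-prime'' (from distributivity).

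More precisely, the argument rests on two lemmas. The first, using only that $L$ is finite, asserts that every element is recovered from the join-irreducibles below it:
\[x=\bigvee\phi(x)\qquad\text{for all }x\in L.\]
I would prove this by induction on the cardinality of the principal ideal $\{y\in L\mid y\leq x\}$: if $x$ is join-irreducible the claim is immediate, if $x$ is the least element it is the empty join (taking, as usual, the least element not to be join-irreducible), and otherwise $x=y\vee z$ with $y,z<x$, so the inductive hypothesis writes $y$ and $z$ as joins of join-irreducibles, all of which lie below $x$. The second lemma is the step where distributivity is indispensable, and is the one I expect to be the crux: every $j\in J$ is join-prime, meaning $j\leq a\vee b$ implies $j\leq a$ or $j\leq b$. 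This is forced by the distributive law, because $j=j\wedge(a\vee b)=(j\wedge a)\vee(j\wedge b)$, so join-irreducibility of $j$ gives $j=j\wedge a$ or $j=j\wedge b$; a short induction then handles joins of arbitrary finite families.

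Granting these, the rest is formal. Injectivity and order-reflection come from the first lemma: if $\phi(x)\subseteq\phi(y)$ then each join-irreducible below $x$ lies below $y$, so $x=\bigvee\phi(x)\leq y$, and in particular $\phi(x)=\phi(y)$ forces $x=y$. For surjectivity, given a poset-ideal $I$ of $J$ I would put $x:=\bigvee I$ and verify $\phi(x)=I$: the inclusion $I\subseteq\phi(x)$ is clear, and conversely if $j\in\phi(x)$ then $j\leq\bigvee I$, whence the second lemma yields $j\leq j'$ for some $j'\in I$, and the poset-ideal property of $I$ gives $j\in I$. Thus $\phi$ is an order-preserving, order-reflecting bijection, i.e.\ an isomorphism of posets $L\simeq\mathcal{I}(J)$.
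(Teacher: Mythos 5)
The paper does not actually prove this statement: it is quoted as Birkhoff's representation theorem with pointers to the cited books of Birkhoff and Gr\"atzer, and is then used as a black box. So there is no in-paper argument to compare yours against; what you wrote is the standard proof from those references, and it is correct. The three ingredients --- the ideal $\phi(x)=\{j\in J\mid j\leq x\}$, the join-generation lemma $x=\bigvee\phi(x)$ (induction on the principal ideal, using finiteness), and join-primeness of join-irreducibles (the single place where distributivity is used) --- are exactly the textbook decomposition, and your deduction of injectivity, order-reflection and surjectivity from them is carried out correctly.

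One point worth flagging: your parenthetical convention that the least element does not count as join-irreducible is not cosmetic but essential. Under the paper's literal definition (an element $x$ is join-irreducible if $x=y\vee z$ implies $y=x$ or $z=x$), the minimum element $\hat{0}$ of $L$ qualifies, since $\hat{0}=y\vee z$ forces $y=z=\hat{0}$. If $\hat{0}$ were admitted into $J$, then every nonempty poset-ideal of $J$ would contain $\hat{0}$, the map $\phi$ would never hit the empty ideal, and the correct conclusion would be $L\simeq\mathcal{I}(J)\setminus\{\emptyset\}$ rather than $L\simeq\mathcal{I}(J)$. So the theorem as stated is true only with your convention; the mismatch lies in the paper's definition of join-irreducible, not in your argument.
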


\section{Pre-projective tilting modules }
In this section we will review \cite{K}. Denote by $\tau=\tau_{Q}$ the Auslander-Reiten translation of $kQ$. First we collect basic properties of the Auslander-Reiten translation.

\begin{prop}
\label{arthm}
$(\mathrm{cf}.$\cite{ARS},\;\cite{ASS},\;\cite{G}$)$
Let $A=kQ$ be a path algebra and $M,N\in \ind A$.
Then the following assertions hold.\\
$(1)$\;If $M$ and $N$ are non-injective modules, then
\[\Hom_{A}(M,N)\simeq \Hom_{A}(\tau^{-1}M,\tau^{-1}N). \]
$(2)$\;$(\mathrm{Auslander}$-$\mathrm{Reiten\ duality})$\;There is a functorial isomorphism,
\[D\Hom_{A}(M,N)\simeq \Ext_{A}^{1}(N,\tau M),\]
where $D:=\Hom_{k}(-,k)$.\\
$(3)$\;For any indecomposable non-projective module $X$ and almost
split sequence
\[0\rightarrow \tau X\rightarrow E\rightarrow X\rightarrow 0,\]
we get
\[\mathrm{dim}\Hom(M,\tau X)-\mathrm{dim}\Hom(M,E)+\mathrm{dim}\Hom(M,X)=\left\{\begin{array}{ll}
1 & X\simeq M \\ 
0 & \mathrm{otherwise.}
\end{array}
\right.\]

\end{prop}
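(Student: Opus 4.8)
The three assertions are the standard pillars of Auslander--Reiten theory for the hereditary algebra $A=kQ$, so the plan is to recall the constructions behind $\tau$ and then prove the parts in the order $(2)$, $(1)$, $(3)$, with $(2)$ as the engine. The apparatus I would set up first is the transpose $\operatorname{Tr}$, defined from a minimal projective presentation $P_{1}\stackrel{p}{\to}P_{0}\to M\to 0$ as the cokernel of $\Hom_{A}(p,A)$, together with the Nakayama functor $\nu=D\Hom_{A}(-,A)$; by definition $\tau=D\operatorname{Tr}$ and $\tau^{-1}=\operatorname{Tr}D$. The one computational input I would isolate at the outset is the functorial adjunction isomorphism $\Hom_{A}(X,\nu P)\simeq D\Hom_{A}(P,X)$, valid for every projective $P$, which is immediate from $\nu P=D\Hom_{A}(P,A)$ and the $\Hom$-tensor adjunction.

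For $(2)$ I would apply the right exact functor $\nu$ to the presentation of $M$, obtaining the exact sequence $0\to\tau M\to\nu P_{1}\xrightarrow{\nu p}\nu P_{0}\to\nu M\to 0$ beginning with $\tau M=D\operatorname{Tr}M$. Splitting this into two short exact sequences and applying $\Hom_{A}(N,-)$, I would use that $\nu$ sends projectives to injectives, so that the relevant groups $\Ext^{1}_{A}(N,\nu P_{i})$ vanish, and then rewrite the surviving terms through the adjunction $\Hom_{A}(N,\nu P_{i})\simeq D\Hom_{A}(P_{i},N)$. Tracking the connecting homomorphisms through the resulting diagram expresses $\Ext^{1}_{A}(N,\tau M)$ as the homology of $D$ applied to the complex $\Hom_{A}(P_{\bullet},N)$ at the appropriate spot, which collapses to $D\Hom_{A}(M,N)$ modulo the image of those maps that factor through an injective; this is precisely the functorial Auslander--Reiten isomorphism, with functoriality in both variables inherited from that of every step.

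Statement $(1)$ is the hereditary phenomenon that $\tau^{-1}$ is fully faithful on non-injectives, and the cleanest route I would take passes to $\mathcal{D}^{\mathrm{b}}(A)$. Since $A$ is hereditary, $\tau^{-1}$ is the restriction of the autoequivalence $\nu^{-1}[1]$, and for a non-injective indecomposable $M$ the object $\tau^{-1}M$ is concentrated in degree $0$, i.e.\ is again a module. Hence for non-injective $M,N$ one obtains $\Hom_{A}(M,N)=\Hom_{\mathcal{D}^{\mathrm{b}}(A)}(M,N)\simeq\Hom_{\mathcal{D}^{\mathrm{b}}(A)}(\tau^{-1}M,\tau^{-1}N)=\Hom_{A}(\tau^{-1}M,\tau^{-1}N)$, the middle isomorphism being full faithfulness of the autoequivalence. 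A purely module-theoretic alternative would invoke that $\operatorname{Tr}$ and $D$ are dualities of the relevant stable categories, making $\tau$ and $\tau^{-1}$ mutually inverse equivalences there; hereditariness is exactly what upgrades the stable statement to honest $\Hom$-spaces once $M,N$ are non-injective.

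Finally, $(3)$ needs only the defining property of the almost split sequence and is independent of $(1)$ and $(2)$. Applying the left exact functor $\Hom_{A}(M,-)$ to $0\to\tau X\to E\to X\to 0$ gives $0\to\Hom(M,\tau X)\to\Hom(M,E)\to\Hom(M,X)$, whence the alternating sum of the three dimensions equals $\dim_{k}\operatorname{coker}[\Hom(M,E)\to\Hom(M,X)]$. Because $E\to X$ is minimal right almost split, a map $M\to X$ lies in the image exactly when it is not a retraction: if $X\not\simeq M$ no such map is a retraction, so the cokernel vanishes, while if $X\simeq M$ the image is the right ideal $\operatorname{rad}\Endhom_{A}(M)$, of codimension $\dim_{k}\Endhom_{A}(M)/\operatorname{rad}=1$ over the algebraically closed field $k$, giving cokernel dimension $1$. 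The main obstacle across the whole package is $(2)$: keeping the diagram chase functorial and correctly accounting for the maps through injectives that separate $\Hom$ from its injectively stable quotient is the only genuinely delicate bookkeeping, everything else being essentially formal.
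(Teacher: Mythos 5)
The paper does not prove this proposition at all: it is quoted as standard from \cite{ARS}, \cite{ASS}, \cite{G}, so your proposal can only be compared with the textbook arguments it reconstructs. Your parts $(1)$ and $(3)$ are correct and are exactly those standard arguments: for $(1)$, both the derived-category route (for hereditary $A$, $\tau^{-1}$ is the restriction of the autoequivalence $\nu^{-1}[1]$ to non-injective indecomposables) and the stable-category alternative work; for $(3)$, left exactness of $\Hom_A(M,-)$ plus the right almost split property of $E\to X$, together with $\Endhom_A(M)/\operatorname{rad}\Endhom_A(M)\simeq k$ over the algebraically closed field $k$, give precisely the stated count.

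In $(2)$, however, the step you yourself single out as "the only genuinely delicate bookkeeping" is done wrongly. The computation you set up --- apply $\nu$ to $P_1\to P_0\to M\to 0$, then $\Hom_A(N,-)$, then the adjunction $\Hom_A(N,\nu P)\simeq D\Hom_A(P,N)$ --- identifies $\Ext^1_A(N,\tau M)$ with the dual of $\Hom_A(M,N)$ modulo the maps that factor through a \emph{projective}, not through an injective: the defect subspace lies inside $\Hom_A(N,\nu M)\simeq D(\Hom_A(M,A)\otimes_A N)$, and the image of the evaluation map $\Hom_A(M,A)\otimes_A N\to \Hom_A(M,N)$ is exactly the set of maps factoring through a projective module. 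The statement as you wrote it is false: for the Kronecker quiver (two vertices, two parallel arrows $1\to 2$) take $M=N=P(1)$; since $P(1)$ is not injective, no nonzero endomorphism of $P(1)$ factors through an injective (such a factorization would make $P(1)$ a direct summand of an injective), so your claimed isomorphism would give $\dim_k D\Hom_A(P(1),P(1))=1$ on one side while $\Ext^1_A(P(1),\tau P(1))=\Ext^1_A(P(1),0)=0$ on the other. With the correct identification $\Ext^1_A(N,\tau M)\simeq D\,\underline{\Hom}_A(M,N)$ ($\Hom$ modulo projectives), the reduction to the formula of the Proposition needs $M$ non-projective, and the hereditary input is the projective-side one: a nonzero map from an indecomposable $M$ to a projective has projective image (submodules of projectives are projective over a hereditary algebra), hence splits, forcing $M$ itself to be projective. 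Some hypothesis of this kind is genuinely required --- the formula as printed already fails whenever $M$ is projective and $\Hom_A(M,N)\neq 0$ --- so the projective/injective mix-up is not cosmetic: it places the necessary hypothesis on the wrong module ($N$ non-injective instead of $M$ non-projective), and the final step of your proof of $(2)$ cannot be completed as written.
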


\begin{defn}
Let $\ptiltq(Q)$ be a full sub-quiver of $\tiltq(Q)$ with $\ptiltq(Q)_{0}=\mathcal{T}_{\mathrm{p}}(Q)$.
\end{defn}

\begin{lem}
$\ptiltq(Q)$ is coincided with the $\mathrm{Hasse}$-quiver of $(\mathcal{T}_{\mathrm{p}}(Q),\leq)$. 
\end{lem}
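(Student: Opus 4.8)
The plan is to reduce the statement to a purely order-theoretic fact together with one structural input about preprojective tilting modules. First I would record the general observation that, for any poset $P$ and any subset $S\subseteq P$, the full sub-quiver of $\overrightarrow{P}$ supported on $S$ is always contained in $\overrightarrow{S}$: if $x\to y$ is an arrow of $\overrightarrow{P}$ with $x,y\in S$, then $x>y$ and no element of $P$ (a fortiori no element of $S$) lies strictly between them, so $x\to y$ is an arrow of $\overrightarrow{S}$. Applying this with $P=(\mathcal{T}(Q),\leq)$ and $S=\mathcal{T}_{\mathrm{p}}(Q)$, and using the Happel--Unger theorem $\tiltq(Q)=\overrightarrow{\mathcal{T}(Q)}$, every arrow of $\ptiltq(Q)$ is an arrow of the Hasse-quiver of $(\mathcal{T}_{\mathrm{p}}(Q),\leq)$.

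It remains to prove the reverse inclusion: if $T,T'\in\mathcal{T}_{\mathrm{p}}(Q)$ and $T$ covers $T'$ in the sub-poset $(\mathcal{T}_{\mathrm{p}}(Q),\leq)$, then $T$ covers $T'$ already in $(\mathcal{T}(Q),\leq)$. For this it suffices to show that $\mathcal{T}_{\mathrm{p}}(Q)$ is an order filter of $\mathcal{T}(Q)$, i.e. that $T''\geq T$ with $T\in\mathcal{T}_{\mathrm{p}}(Q)$ forces $T''\in\mathcal{T}_{\mathrm{p}}(Q)$. Granting this, any $T''$ with $T>T''>T'$ would satisfy $T''>T'\in\mathcal{T}_{\mathrm{p}}(Q)$, hence $T''\in\mathcal{T}_{\mathrm{p}}(Q)$, contradicting that $T$ covers $T'$ in the sub-poset; so no element strictly between $T$ and $T'$ exists in $\mathcal{T}(Q)$ either. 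This filter property is where the real content lies.

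To establish it I would argue as follows. Suppose $T=\bigoplus_{i}\tau^{-r_{i}}P(i)\in\mathcal{T}_{\mathrm{p}}(Q)$ and $T''\geq T$, that is $\Ext^{1}_{kQ}(T'',T)=0$, and let $Z$ be an indecomposable direct summand of $T''$. Assume for contradiction that $Z$ is not preprojective, so $Z$ is regular or preinjective. For each summand $\tau^{-r_{i}}P(i)$ of $T$ one has $\Hom_{kQ}(Z,\tau^{-r_{i}}P(i))=0$, since over a hereditary algebra there are no nonzero morphisms from a regular or preinjective module to a preprojective one; and $\Ext^{1}_{kQ}(Z,\tau^{-r_{i}}P(i))=0$ because $Z$ and $\tau^{-r_{i}}P(i)$ are summands of $T''$ and $T$. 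Hence the Euler form satisfies $\langle\dimvec Z,\dimvec\tau^{-r_{i}}P(i)\rangle=0$ for all $i$. As $T$ is a tilting module, the classes $\dimvec\tau^{-r_{i}}P(i)$ form a $\mathbb{Z}$-basis of $K_{0}(kQ)$, and the Euler form of the hereditary algebra $kQ$ is non-degenerate; therefore $\langle\dimvec Z,-\rangle\equiv 0$ and $\dimvec Z=0$, which is absurd. Thus every summand of $T''$ is preprojective, i.e. $T''\in\mathcal{T}_{\mathrm{p}}(Q)$.

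Combining the two inclusions gives $\ptiltq(Q)=\overrightarrow{(\mathcal{T}_{\mathrm{p}}(Q),\leq)}$, the vertex sets agreeing by definition. The main obstacle is precisely the filter property of $\mathcal{T}_{\mathrm{p}}(Q)$: the order-theoretic reductions are routine, but ruling out a non-preprojective tilting module strictly between two preprojective ones needs genuine input, and the clean device is the non-degeneracy of the Euler form evaluated against the basis furnished by the summands of $T$. The two classical facts to verify carefully are that the summands of a tilting module give a basis of $K_{0}(kQ)$ and that $\Hom$ vanishes from regular and preinjective modules to preprojective ones.
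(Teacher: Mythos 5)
Your proposal is correct, but there is no in-paper proof to compare it against: the lemma sits in Section 2, which explicitly reviews \cite{K}, and the paper defers the argument to that reference. Judged on its own terms, your reduction is exactly the right one. The inclusion of $\ptiltq(Q)$ into the Hasse-quiver of $(\mathcal{T}_{\mathrm{p}}(Q),\leq)$ is the formal half: passing from $\mathcal{T}(Q)$ to a subset can only create new covering relations among its elements (intermediate elements may disappear), never destroy them, so by the Happel--Unger theorem every arrow of $\ptiltq(Q)$ survives. All the content is in the up-closure (filter) property, namely that $T''\geq T$ with $T\in\mathcal{T}_{\mathrm{p}}(Q)$ forces $T''\in\mathcal{T}_{\mathrm{p}}(Q)$, and your Euler-form argument for it is sound: for an indecomposable non-preprojective summand $Z$ of $T''$ one has $\Hom_{kQ}(Z,\tau^{-r_{i}}P(i))=0$ (no nonzero maps from regular or preinjective modules to preprojective ones), and $\Ext^{1}_{kQ}(Z,\tau^{-r_{i}}P(i))=0$ from $T''\geq T$, so $\langle \dimvec Z,\dimvec \tau^{-r_{i}}P(i)\rangle=0$ for all $i$; since the classes of the indecomposable summands of a tilting module form a $\mathbb{Z}$-basis of $K_{0}(kQ)$ (standard, e.g.\ via the derived equivalence with $\Endhom_{kQ}(T)$) and the Euler form of an acyclic quiver is non-degenerate (its matrix is unitriangular in a topological ordering of $Q_{0}$), this gives $\dimvec Z=0$, a contradiction. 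The two classical facts you flag are genuinely the only external inputs and both are textbook statements for hereditary algebras, so the proof is complete; note also that in the representation-finite case the filter property is vacuous, since every indecomposable is then preprojective.

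As a small remark, the $K$-theoretic step can be bypassed: the vanishing $\Hom_{kQ}(Z,T)=0=\Ext^{1}_{kQ}(Z,T)$ says the total derived Hom from $Z$ to $T$ is zero, and since a tilting module generates $\mathcal{D}^{\mathrm{b}}(\modu kQ)$ as a thick subcategory, this forces $\Hom$ and $\Ext^{1}$ from $Z$ to every module to vanish, in particular $\Endhom_{kQ}(Z)=0$, i.e.\ $Z=0$. This buys the same conclusion without invoking non-degeneracy of the Euler form or the basis property, at the cost of quoting the generation property of tilting modules.
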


Now we consider the condition,
 \[(\mathrm{C})\ \ \delta(x):=\#\{\alpha\in Q_{1}\mid s(\alpha)=x\ \mathrm{or\ } t(\alpha)=x\}\geq 2 \ \mathrm{for\ any\ }x\in Q_{0}.\]
 Let $d(X,Y):=\mathrm{dim}$\;$\Ext_{kQ}^{1}(X,Y)$. If $Q$ satisfies
 the condition (C), then $kQ$ is representation infinite and pre-projective part of its Auslander-Reiten quiver is the translation quiver $\Z_{\leq 0}Q$ (cf.\cite{ASS}).
 Let $Y=\tau^{-s}P(y)$. Then Proposition\;\ref{arthm} implies the following.
\[
d(\tau^{-r}P(x),Y)    =\left\{\begin{array}{ll}
0 & \mathrm{if\ }(r,x)\not\succeq (s,y)  \\ 
1 & \mathrm{if\ }(r,x)=(s,y) \\ 
\begin{array}{l}
  \sum_{\alpha:s(\alpha)=x}d(\tau^{-r}P(t(\alpha)),Y) \\ 
 + \sum_{\alpha:t(\alpha)=x}d(\tau^{-r+1}P(s(\alpha)),Y)  \\ 
 - d(\tau^{-r+1}P(x),Y) 
 \end{array}  &  \mathrm{if\ }(r,x)\succ (s,y), \\
\end{array} \right.
\]
where $(r,x)\succeq (s,y)$ means either (i)\:$r>s$ or (ii)\;$r=s$ and there is a path from $x$ to $y$ hold.

\begin{lem}
\label{l}
Assume $Q$ satisfies the condition $(\mathrm{C})$. If there is an arrow $\gamma : x\rightarrow y$ in $Q$, then 
\[\mathrm{dim\;}\Ext_{kQ}^{1}(\tau^{-r}P(y),M)\leq \mathrm{dim\;}\Ext_{kQ}^{1}(\tau^{-r}P(x),M)\leq \mathrm{dim\;}\Ext_{kQ}^{1}(\tau^{-r-1}P(y),M).\]
for any $r\geq 0$ and $M\in \modu kQ$.
\end{lem}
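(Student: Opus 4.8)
The plan is to fix $M$ and study the integer-valued function $f(r,a):=\dim\Ext^{1}_{kQ}(\tau^{-r}P(a),M)$ on $\Z_{\geq 0}\times Q_{0}$, reformulating the assertion as the two families of inequalities
\[
(A_{r})\colon\ f(r,b)\leq f(r,a),\qquad (B_{r})\colon\ f(r,a)\leq f(r+1,b)\quad\text{for every arrow }a\to b\text{ in }Q.
\]
The lemma is exactly ``$(A_{r})$ and $(B_{r})$ hold for all $r\geq 0$''. I would first record two inputs. Since $P(a)$ is projective, $f(0,a)=0$ for all $a$, which settles $(A_{0})$ and $(B_{0})$ at once. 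Next, the displayed recursion preceding the lemma holds with $M$ in place of the preprojective module $Y$: applying $\Hom_{kQ}(-,M)$ to the almost split sequence ending at $\tau^{-r}P(a)$ (whose middle term is $\bigoplus_{s(\alpha)=a}\tau^{-r}P(t(\alpha))\oplus\bigoplus_{t(\alpha)=a}\tau^{-r+1}P(s(\alpha))$) and using that $kQ$ is hereditary together with Proposition~\ref{arthm}(3), one obtains for every $r\geq 1$
\[
f(r,a)=\sum_{s(\alpha)=a}f(r,t(\alpha))+\sum_{t(\alpha)=a}f(r-1,s(\alpha))-f(r-1,a)+\varepsilon_{r,a},
\]
where $\varepsilon_{r,a}\geq 0$ is the dimension of the cokernel of the induced map on $\Hom$-spaces. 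The key point is that this correction $\varepsilon_{r,a}$ only ever helps the inequalities below, so it may simply be discarded.

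Then I would prove $(A_{r})$ and $(B_{r})$ by induction on $r$, the cases $r=0$ being done. Assume both hold below level $r$. To prove $(A_{r})$ for an arrow $\gamma\colon a\to b$, isolate the summand $f(r,b)$ contributed by $\gamma$ in the recursion, so that $f(r,a)-f(r,b)$ equals a sum of non-negative terms minus $f(r-1,a)$. If $a$ has an incoming arrow $w\to a$, then $(A_{r-1})$ gives $f(r-1,a)\leq f(r-1,w)$, and $f(r-1,w)$ is one of the retained summands; if $a$ has no incoming arrow, then condition $(\mathrm{C})$ forces a second outgoing arrow $a\to b'$, and $(B_{r-1})$ gives $f(r-1,a)\leq f(r,b')$ with $f(r,b')$ a retained summand. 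Either way the subtracted term is dominated and $(A_{r})$ follows; this step uses only the induction hypothesis at level $r-1$.

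For $(B_{r})$ I would run a secondary induction on the vertices $b$ taken in reverse topological order, which is legitimate because $Q$ is acyclic. Writing the recursion for $f(r+1,b)$ and isolating the summand $f(r,a)$ contributed by $\gamma\colon a\to b$, the quantity $f(r+1,b)-f(r,a)$ becomes a sum of non-negative terms minus $f(r,b)$. Condition $(\mathrm{C})$ supplies a second arrow at $b$ besides $\gamma$: if it is an outgoing arrow $b\to c$, then $c$ is strictly downstream of $b$, so $(B_{r})$ for $b\to c$ is available by the secondary induction and yields $f(r,b)\leq f(r+1,c)$ with $f(r+1,c)$ a retained summand; if it is a second incoming arrow $w\to b$, then $(A_{r})$, already established at this level in the previous paragraph, gives $f(r,b)\leq f(r,w)$ with $f(r,w)$ a retained summand. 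In both cases the subtracted term is dominated, giving $(B_{r})$.

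I expect the only genuine subtlety to be the apparent circularity in $(B_{r})$, namely that proving it for $a\to b$ may invoke $(B_{r})$ again for a downstream arrow $b\to c$; this is precisely what the secondary induction along the acyclic quiver is designed to break, its base being the sinks, where no outgoing arrow exists and condition $(\mathrm{C})$ forces the second alternative through $(A_{r})$. The other thing to get right is that condition $(\mathrm{C})$ is used in exactly the form ``there is always one more arrow at the relevant vertex than the arrow $\gamma$ we singled out,'' which is what lets the negative term $-f(r-1,a)$ (resp.\ $-f(r,b)$) be absorbed; this also makes transparent why the hypothesis $(\mathrm{C})$ is indispensable.
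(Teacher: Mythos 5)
There is no in-paper proof to compare you against: the paper states Lemma~\ref{l} without proof, as part of its review of \cite{K}. Judged on its own terms, your argument is correct and complete. Writing $f(r,a)=\dim\Ext^{1}_{kQ}(\tau^{-r}P(a),M)$ as you do, the recursion you start from is the right generalization of the paper's displayed formula: applying $\Hom_{kQ}(-,M)$ to the almost split sequence $0\to\tau^{-r+1}P(a)\to E\to\tau^{-r}P(a)\to 0$ and using $\Ext^{2}_{kQ}=0$ gives
\[
f(r,a)=\sum_{\alpha:s(\alpha)=a}f(r,t(\alpha))+\sum_{\alpha:t(\alpha)=a}f(r-1,s(\alpha))-f(r-1,a)+\varepsilon_{r,a},
\]
where $\varepsilon_{r,a}=\dim \mathrm{coker}\bigl(\Hom_{kQ}(E,M)\to \Hom_{kQ}(\tau^{-r+1}P(a),M)\bigr)$ is in fact the multiplicity of $\tau^{-r+1}P(a)$ as a direct summand of $M$; as you note, only $\varepsilon_{r,a}\geq 0$ is used, and this correction term is precisely what lets you pass from preprojective $Y$ (the paper's setting) to arbitrary $M$. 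Your double induction is well-founded and closes every case: $(A_{0})$ and $(B_{0})$ are trivial since $\Ext^{1}_{kQ}(P(a),M)=0$; $(A_{r})$ needs only $(A_{r-1})$ (when $a$ has an incoming arrow) or $(B_{r-1})$ (when $a$ is a source, condition $(\mathrm{C})$ supplying a second outgoing arrow); and $(B_{r})$ needs $(A_{r})$, already established at the same level (when $b$ is a sink, condition $(\mathrm{C})$ supplying a second incoming arrow), or $(B_{r})$ at a target one step nearer the sinks, which your secondary induction with sinks processed first makes available — acyclicity of $Q$ is exactly what makes that inner induction terminate, so there is no circularity. Two details worth making explicit in a full write-up: under $(\mathrm{C})$ the underlying graph is not Dynkin, so every $\tau^{-r}P(a)$ is nonzero, indecomposable and non-injective, and the almost split sequence ending at it has exactly the middle term you wrote (this is the paper's assertion that the preprojective component is $\Z_{\leq 0}Q$); and the ``second arrow'' produced by $(\mathrm{C})$ may be parallel to $\gamma$, which is harmless — the retained summand is then $f(r,b)$ (resp.\ $f(r,a)$) itself and the same inequality from level $r-1$ (resp.\ $(A_{r})$) applies.
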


We define a map $\l_{Q}:Q_{0}\times Q_{0}\rightarrow \mathbb{Z}_{\geq 0}$ as follows: Let $\tilde{Q}$ be a quiver with $\tilde{Q}_{0}:=Q_{0}$ and
 $\tilde{Q}_{1}:=Q_{1}\coprod -Q_{1}$ where for any arrow $\alpha: x\rightarrow y$ in $Q$ we set $-\alpha:y\rightarrow x$. For any path $w:x_{0}\stackrel{\alpha_{1}}{\rightarrow} x_{1}\stackrel{\alpha_{2}}{\rightarrow}\cdots \stackrel{\alpha_{r}}{\rightarrow} x_{r}$ in $\tilde{Q}$ we put $c^{+}(w):=\#\{i\mid \alpha_{i}\in Q_{1}\}$. Then we set $l_{Q}(x,y):=\mathrm{min}\{c^{+}(w)\mid w:\mathrm{path\ from\ }x\ \mathrm{to}\ y\ \mathrm{in}\ \tilde{Q}\}$.

\begin{prop}
\label{p}
If $Q$ satisfies the condition $(\mathrm{C})$, 
 then
 \[\Ext_{kQ}^{1}(\tau^{-r}P(i),\tau^{-s}P(j))=0\Leftrightarrow r\leq s+l_{Q}(j,i)\]
 \end{prop}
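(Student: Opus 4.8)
The plan is to fix the second argument $M=\tau^{-s}P(j)$ and study the function $(r,v)\mapsto d(\tau^{-r}P(v),M)=\dim\Ext^{1}_{kQ}(\tau^{-r}P(v),M)$ on $\Z_{\geq 0}\times Q_{0}$; the asserted equivalence says exactly that, for each $v$, this vanishes precisely for $r\leq s+l_{Q}(j,v)$, i.e.\ its first nonzero value occurs at $r=s+l_{Q}(j,v)+1$. I read Lemma~\ref{l} as a pair of one-step monotonicity rules: for an arrow $x\to y$ in $Q$,
\[d(\tau^{-r}P(y),M)\leq d(\tau^{-r}P(x),M)\leq d(\tau^{-r-1}P(y),M),\]
so that crossing a $Q$-arrow \emph{backwards} (target to source) at fixed $r$ cannot decrease the value, while crossing it \emph{forwards} (source to target) at the cost of raising $r$ by one also cannot decrease the value. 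Since condition $(\mathrm{C})$ forces every vertex to be incident to an arrow, chaining the two inequalities through an arrow incident to $v$ yields $d(\tau^{-r}P(v),M)\leq d(\tau^{-r-1}P(v),M)$, so for each fixed $v$ the vanishing set in $r$ is downward closed and it suffices to locate the threshold. At the base vertex $v=j$ I use Proposition~\ref{arthm}: indecomposable preprojective modules are rigid, giving $d(\tau^{-s}P(j),M)=0$, whereas Auslander--Reiten duality rewrites $d(\tau^{-s-1}P(j),M)=\dim\Hom(\tau^{-s-1}P(j),\tau^{-s-1}P(j))=1\neq 0$. As $l_{Q}(j,j)=0$, this is the claimed threshold at $j$.

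For the implication $r>s+l_{Q}(j,i)\Rightarrow d\neq0$ I propagate this nonzero base value outward. I choose a walk $w$ from $j$ to $i$ in $\tilde{Q}$ with $c^{+}(w)=l_{Q}(j,i)$ and traverse it: each backward arrow keeps $r$ fixed and each forward arrow raises $r$ by one, and by the two monotonicity rules the value never drops. Starting from $(s+1,j)$ the walk ends at $(s+1+l_{Q}(j,i),i)$ with value $\geq 1$, so $d(\tau^{-(s+l_{Q}(j,i)+1)}P(i),M)\neq0$, and the monotonicity in $r$ established above extends nonvanishing to every $r\geq s+l_{Q}(j,i)+1$. Note that the minimality in the definition of $l_{Q}$ is used only to make the walk as cheap as possible, so that the threshold is reached exactly; any nonminimal walk would give a weaker bound.

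The converse $d\neq0\Rightarrow r>s+l_{Q}(j,i)$ is the crux. Here I again invoke Auslander--Reiten duality to write $d(\tau^{-r}P(i),M)=\dim\Hom(\tau^{-s-1}P(j),\tau^{-r}P(i))$, and use that in the preprojective component $\Z_{\leq 0}Q$ a $\Hom$ space between indecomposables can be nonzero only when the target is a successor of the source. A connecting path in $\Z_{\leq 0}Q$ from $(s+1,j)$ to $(r,i)$ projects to a walk from $j$ to $i$ in $\tilde{Q}$ whose number of forward arrows equals the level difference $r-(s+1)$; hence $r-s-1=c^{+}(w)\geq l_{Q}(j,i)$, which is the claim. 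The main obstacle is precisely the input that \emph{nonvanishing of $\Hom$ is equivalent to reachability} in the preprojective component, i.e.\ that the mesh relations never force the $\Hom$ between two comparable indecomposables to vanish. I would establish this by a reversed application of Lemma~\ref{l}, propagating \emph{vanishing} (if $d(\tau^{-r}P(x),M)=0$ then $d(\tau^{-r}P(y),M)=0$ for every arrow $x\to y$, and if $d(\tau^{-r-1}P(y),M)=0$ then $d(\tau^{-r}P(x),M)=0$), using the displayed recursion for $d$ to control the subtracted term, and showing by induction on $r-s$ that $d(\tau^{-r}P(v),M)=0$ for all $(r,v)$ with $r\leq s+l_{Q}(j,v)$. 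Synchronizing this induction with the minimum defining $l_{Q}$, so that the inductive hypothesis is invoked exactly along a geodesic walk and the sign in the recursion cannot resurrect a nonzero value, is the delicate point.
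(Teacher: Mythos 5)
Your first half is fine and is essentially the paper's own argument: the paper proves $(\Rightarrow)$ by exactly your propagation, chaining the two inequalities of Lemma~\ref{l} along a walk realizing $l_{Q}(j,i)$, starting from the positive value $d(\tau^{-s-1}P(j),\tau^{-s}P(j))=\dim\Endhom_{kQ}(\tau^{-s-1}P(j))=1$ (the paper states this contrapositively, deriving $0\geq d(\tau^{-1}P(j),P(j))>0$). The gap is in the converse, which you rightly call the crux. Your main route hinges on the claim that a nonzero $\Hom$ between indecomposables in the preprojective component forces a path in the Auslander--Reiten quiver; you do not prove it, and the fallback induction you sketch in its place does not close. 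Concretely, the mesh inequality at a pair $(r,v)$,
\[
d(\tau^{-r}P(v),P(j))\ \leq\ \sum_{\alpha:\,s(\alpha)=v} d(\tau^{-r}P(t(\alpha)),P(j))\ +\ \sum_{\beta:\,t(\beta)=v} d(\tau^{-r+1}P(s(\beta)),P(j)),
\]
has its first sum at the \emph{same} level $r$, so an induction on $r-s$ alone can never discharge those terms; and ``synchronizing along a geodesic walk'' does not address this, because the recursion at $v$ involves every arrow incident to $v$, not only the arrows of a geodesic. (The sign you worry about is harmless: one simply drops the nonnegative subtracted term, which is how the displayed inequality arises.)

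The missing idea is the paper's double extremal choice. Set $\mathcal{A}(j):=\{(i,r)\mid r\leq l_{Q}(j,i),\ \Ext^{1}_{kQ}(\tau^{-r}P(i),P(j))\neq 0\}$ and suppose it is nonempty; take $r$ minimal, and then take $i$ with $(i,r)\in\mathcal{A}(j)$ such that $(i',r)\notin\mathcal{A}(j)$ for every arrow $i\rightarrow i'$ in $Q$ (possible since $Q$ is finite and acyclic). Then every term of the first sum vanishes: $r\leq l_{Q}(j,i)\leq l_{Q}(j,t(\alpha))$ puts $(t(\alpha),r)$ inside the region governed by $\mathcal{A}(j)$, and the choice of $i$ says it is not a counterexample. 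Every term of the second sum vanishes by minimality of $r$, since $r-1\leq l_{Q}(j,i)-1\leq l_{Q}(j,s(\beta))$. Hence $0<d(\tau^{-r}P(i),P(j))\leq 0$, a contradiction, and the case of general $s$ follows by applying $\tau^{s}$ via Proposition~\ref{arthm}. This vertex-level extremality---processing, at the critical level $r$, a vertex all of whose $Q$-successors are already known to vanish---is precisely what your sketch lacks. Alternatively, your AR-duality route does work if you are willing to quote the standard fact (see \cite{ARS} or \cite{ASS}) that nonzero morphisms between indecomposables in a postprojective component are sums of compositions of irreducible morphisms; that would give a genuinely different and shorter proof of the converse, but as written you decline to assume it, and the replacement argument you offer is incomplete.
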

 \begin{proof} 
 $(\Rightarrow)$: Let $w:j=x_{0}\stackrel{\alpha_{1}}{\rightarrow}x_{1}\rightarrow
 \cdots \stackrel{\alpha_{t}}{\rightarrow}x_{t}=i$ be a path such that
 $l(j,i):=l_{Q}(j,i)=c^{+}(w)$ and $\{k_{1}<k_{2}<\cdots<k_{l(j,i)}\}=
 \{k\mid \alpha_{k}\in Q_{1}\}.$ If there exists $r>l(j,i)$ such that
 $\Ext^{1}(\tau^{-r}P(i),P(j))=0$, then, by Lemma\;\ref{l}, we obtain
 \[\begin{array}{l}
 0=d(\tau^{-r}P(x_{t}),P(j))\geq d(\tau^{-r}P(x_{k_{l(j,i)}}),P(j))\geq d(\tau^{-r+1}P(x_{k_{l(j,i)}-1}),P(j))\\ \geq\cdots \geq d(\tau^{-r+l(j,i)-1}P(x_{k_{1}}),P(j)) 
 \geq d(\tau^{-r+l(j,i)}P(x_{k_{1}-1}),P(j))\\ \geq d(\tau^{-r+l(j,i)}P(j),P(j))
 \geq \cdots \geq d(\tau^{-1}P(j),P(j))>0.
 \end{array}\]
 Therefore we get a contradiction. In particular if $\Ext_{kQ}^{1}(\tau^{-r}P(i),\tau^{-s}P(j))=0$ with $r > s+l(j,i)$, then by Proposition\;\ref{arthm}, we obtain a contradiction.
 
$(\Leftarrow)$ Let $\mathcal{A}(j):=\{(i,r)\mid r\leq l(j,i),\ \Ext_{kQ}^{1}(\tau^{-r}P(i),P(j))\neq 0\}$. If $\mathcal{A}(j)\neq \emptyset$,
 then we take $r:=\mathrm{min}\{r\mid (i,r)\in \mathcal{A}(j)\ \mathrm{for\ some\ }i\}$. Let $i\in Q_{0}$ such that $(i,r)\in \mathcal{A}(j)$ and
$(i^{'},r)\notin \mathcal{A}(j)$ for any $i^{'}\leftarrow i$ in $Q$.
Since 
\[0< d(\tau^{-r}P(i),P(j))\leq \sum_{\alpha:s(\alpha)=i }d(\tau^{-r}P(t(\alpha)),P(j))+\sum_{\beta:t(\beta)=i}d(\tau^{-r+1}P(s(\beta)),P(j)),\]
we obtain (1)\;$d_{j}(t(\alpha)+rn)\neq 0 $ for some $\alpha \in s(i)$ or (2)\;$d_{j}(s(\beta)+(r-1)n)\neq 0$ for some $\beta \in t(i)$.
Note that $r\leq l(j,i)\leq l(j,t(\alpha))$ for any $\alpha\in Q_{1}$ with $s(\alpha)=i$ and $r-1\leq l(j,i)-1 \leq l(j,s(\beta))$ for any $\beta\in Q_{1}$ with $t(\beta)=i$.
By the definition of $(i,r)$, we obtain  $d(\tau^{-r}P(t(\alpha)),P(j))=0=d(\tau^{-r+1}P(s(\beta)),P(j))$ for any $\alpha\in Q_{1}$ with $s(\alpha)=i$ and $\beta \in Q_{1}$ with $t(\beta)=i$. Therefore  we get a contradiction. In particular we obtain 
$\mathcal{A}(j)=\emptyset.$ 

Suppose there exists $(i,r,s)\in Q_{0}\times \mathbb{Z}_{\geq 0}\times\mathbb{Z}_{\geq 0}$
 such that $r\leq s+l(j,i)$ and \[\Ext_{kQ}^{1}(\tau^{-r}P(i),\tau^{-s}P(j))\neq 0.\] 
 If $r< s$, then Proposition\;\ref{arthm} shows $\Ext_{kQ}^{1}(\tau^{-r}P(i),\tau^{-s}P(j))= 0$. If  $r\geq s$, then Proposition\;\ref{arthm} implies $(i,r-s)\in \mathcal{A}(j)$. Therefore we obtain a contradiction.    
   
 \end{proof}

We note that in the proof of $(\Leftarrow)$ of the above Proposition we did not use the condition $(\mathrm{C})$. In particular we obtain
the following Corollary.

\begin{cor}
\label{c1}
Let $i,j\in Q_{0}$ and $r,s\in \mathbb{Z}_{\geq 0}$. If $r\leq s+l_{Q}(i,j)$, then
\[\Ext_{kQ}^{1}(\tau^{-r}P(i),\tau^{-s}P(j))=0.\] 

\end{cor}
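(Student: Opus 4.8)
The statement is exactly the implication $(\Leftarrow)$ of Proposition \ref{p}, and the remark immediately preceding it records that the proof of that implication never used condition $(\mathrm{C})$; so the plan is simply to check that that argument remains valid for an arbitrary finite connected quiver $Q$ without loops or oriented cycles. Condition $(\mathrm{C})$ entered Proposition \ref{p} only to guarantee that $kQ$ is representation-infinite and that the pre-projective component of its Auslander--Reiten quiver is $\Z_{\le 0}Q$, the latter being what underlies the boxed recursion for $d(\tau^{-r}P(x),Y)$. I would therefore justify that recursion directly, reading it off from the almost-split-sequence identity in Proposition \ref{arthm}$(3)$ applied to the almost split sequence ending in $\tau^{-r}P(x)$, together with Auslander--Reiten duality; both inputs are available over any hereditary path algebra. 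In particular I would not invoke Lemma \ref{l}, whose statement does assume $(\mathrm{C})$.

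Granting this, I would first reduce to the case $s=0$. When $r<s$ the group $\Ext^{1}_{kQ}(\tau^{-r}P(i),\tau^{-s}P(j))$ already vanishes by Proposition \ref{arthm}, and the hypothesis $r\le s+l_{Q}(i,j)$ holds automatically; so I may assume $r\ge s$ and shift both arguments down by $\tau^{s}$ using Proposition \ref{arthm}$(1)$, obtaining
\[\Ext^{1}_{kQ}(\tau^{-r}P(i),\tau^{-s}P(j))\simeq \Ext^{1}_{kQ}(\tau^{-(r-s)}P(i),P(j)),\]
where now $r-s\le l_{Q}(i,j)$. It thus suffices to show, for every $r'\ge 0$ with $r'\le l_{Q}(i,j)$, that $\Ext^{1}_{kQ}(\tau^{-r'}P(i),P(j))=0$. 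I would prove this by the same minimal-counterexample induction as in Proposition \ref{p}: if the set of ``bad'' pairs is nonempty, choose one whose $\tau$-power $r'$ is minimal, then a vertex $i$ that is bad at power $r'$ while no target of an arrow out of $i$ is bad at power $r'$, and substitute $i$ into the dimension recursion
\[0<d(\tau^{-r'}P(i),P(j))\le \sum_{s(\alpha)=i}d(\tau^{-r'}P(t(\alpha)),P(j))+\sum_{t(\beta)=i}d(\tau^{-r'+1}P(s(\beta)),P(j)).\]
Provided the forward neighbours $t(\alpha)$ stay inside the range controlled by $l_{Q}$ (the point isolated below), the choice of $i$ forces the summands with source $i$ to vanish, so some summand with target $i$ must be nonzero; this exhibits a bad pair of strictly smaller $\tau$-power, contradicting minimality.

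The one real point to check, and where I expect the work to concentrate, is precisely that the neighbouring pairs appearing on the right-hand side of the recursion genuinely remain within the range controlled by $l_{Q}$, so that the minimality of $(i,r')$ is legitimately contradicted. This is governed by the one-step behaviour of $l_{Q}$ along the arrows incident to $i$ already exploited inside the proof of Proposition \ref{p}: along each arrow the value of $l_{Q}$ moves by at most one, in the direction fixed by the orientation, and these comparisons are what carry the bound $r'\le l_{Q}(i,j)$ from $i$ to the relevant neighbours $t(\alpha)$ and $s(\beta)$. Since both this combinatorial input and the dimension recursion are independent of $(\mathrm{C})$, the whole argument goes through for arbitrary $Q$, which is the assertion of the corollary.
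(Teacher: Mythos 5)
Your proposal is correct and is essentially the paper's own argument: the paper obtains the corollary precisely by remarking that the proof of the $(\Leftarrow)$ half of Proposition~\ref{p} nowhere used condition $(\mathrm{C})$, and the one point you elaborate---justifying the dimension recursion directly from Proposition~\ref{arthm}$(3)$ and Auslander--Reiten duality, with vanished summands contributing zero where the preprojective component is truncated---is exactly the implicit content of that remark. One caveat: the corollary as printed transposes the arguments of $l_{Q}$ relative to Proposition~\ref{p}, and the correct hypothesis is $r\leq s+l_{Q}(j,i)$ (for the Kronecker quiver with two arrows $1\to 2$ one has $r=1\leq l_{Q}(1,2)$ yet $\Ext^{1}_{kQ}(\tau^{-1}P(1),P(2))\simeq D\Hom_{kQ}(P(2),P(1))\neq 0$); since you run the minimal-counterexample induction exactly as in Proposition~\ref{p}, where the bad set is $\{(i,r)\mid r\leq l_{Q}(j,i),\ \Ext^{1}_{kQ}(\tau^{-r}P(i),P(j))\neq 0\}$, what your argument actually proves is this corrected statement, so your intermediate bounds written with $l_{Q}(i,j)$ should be read with the arguments in the order $l_{Q}(j,i)$.
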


\begin{thm}
\label{t0}
Assume that $Q$ satisfies the condition $(\mathrm{C})$. Then we get followings:\\
$(1)$\;Let $T\in \mathcal{T}_{\mathrm{p}}(Q)$. Then there exists
$(r_{x})_{x\in Q_{0}}\in \mathbb{Z}^{Q_{0}}_{\geq 0} $ such that
$T=\bigoplus_{x\in Q_{0}}\tau^{-r_{x}}P(x)$.\\
$(2)$\; $\bigoplus_{x\in Q_{0}}\tau^{-r_{x}}P(x)\rightarrow (r_{x})_{x\in Q_{0}}$ induces both a poset inclusion, \[\mathcal{T}_{\mathrm{p}}(Q)\rightarrow (\mathbb{Z}^{Q_{0}}_{\geq 0},\leq^{\mathrm{op}})\] and a quiver inclusion, \[\ptiltq(Q)\rightarrow \overrightarrow{(\mathbb{Z}^{Q_{0}}_{\geq 0},\leq^{\mathrm{op}})}\].
In this case we set $T_{x}:=r_{x}$ for any $T\simeq \bigoplus_{x\in Q_{0}}\tau^{-r_{x}}P(x)$.

\end{thm}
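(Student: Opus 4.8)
The central tool throughout is Proposition~\ref{p}, together with the observation that $l_{Q}(x,x)=0$ for every $x\in Q_{0}$, since the trivial path at $x$ has $c^{+}=0$. For part (1), I would argue as follows. Under (C) the preprojective component is $\mathbb{Z}_{\leq 0}Q$, so every indecomposable summand of $T$ has the form $\tau^{-r}P(x)$ with $r\geq 0$, distinct pairs $(r,x)$ giving non-isomorphic modules; write $T=\bigoplus_{i}\tau^{-r_{i}}P(x_{i})$, which has $\#Q_{0}$ summands. First I would rule out repeated vertices: if $\tau^{-r}P(x)$ and $\tau^{-s}P(x)$ with $r<s$ both occurred, then Proposition~\ref{p} with $l_{Q}(x,x)=0$ gives $\Ext^{1}_{kQ}(\tau^{-s}P(x),\tau^{-r}P(x))\neq 0$ because $s>r$, contradicting $\Ext^{1}_{kQ}(T,T)=0$. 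Hence the $\#Q_{0}$ summands lie at $\#Q_{0}$ pairwise distinct vertices, and by the pigeonhole principle each vertex occurs exactly once, so $T\simeq\bigoplus_{x\in Q_{0}}\tau^{-r_{x}}P(x)$ with $(r_{x})_{x}$ well defined.

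For part (2) I would first reformulate rigidity through Proposition~\ref{p}: the module $\bigoplus_{x}\tau^{-T_{x}}P(x)$ is rigid if and only if $T_{y}-T_{x}\leq l_{Q}(x,y)$ for all $x,y\in Q_{0}$, so in particular every $T\in\mathcal{T}_{\mathrm{p}}(Q)$ obeys this family of inequalities. The assignment $\phi:T\mapsto (T_{x})_{x}$ is injective by (1). It is order preserving: if $T\leq T'$ then $\Ext^{1}_{kQ}(T',T)=0$, and taking the summand at the same vertex $x$ on both sides, Proposition~\ref{p} yields $T'_{x}\leq T_{x}+l_{Q}(x,x)=T_{x}$ for all $x$, i.e. $\phi(T)\leq^{\mathrm{op}}\phi(T')$. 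Conversely, if $\phi(T)\leq^{\mathrm{op}}\phi(T')$, that is $T'_{x}\leq T_{x}$ for all $x$, then the rigidity inequalities for $T'$ give $T'_{y}\leq T'_{x}+l_{Q}(x,y)\leq T_{x}+l_{Q}(x,y)$ for all $x,y$, whence $\Ext^{1}_{kQ}(T',T)=0$ and $T\leq T'$. Thus $\phi$ is a poset inclusion.

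For the quiver statement, I would use that an arrow $T\to T'$ of $\ptiltq(Q)$ is a mutation $T=M\oplus X$, $T'=M\oplus Y$ with $X,Y$ indecomposable. Since $M$ already exhausts $\#Q_{0}-1$ distinct vertices by (1), both $X$ and $Y$ sit at the one remaining vertex $x$, say $X=\tau^{-r}P(x)$ and $Y=\tau^{-s}P(x)$; hence $\phi(T)$ and $\phi(T')$ differ only in the coordinate $x$, with $r<s$ (from $T>T'$, order preservation, and injectivity). It then remains to prove $s=r+1$, which is precisely the assertion that $\phi(T)\to\phi(T')$ is a covering relation in $\overrightarrow{(\mathbb{Z}^{Q_{0}}_{\geq 0},\leq^{\mathrm{op}})}$. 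To force this I would introduce $T'':=M\oplus\tau^{-(r+1)}P(x)$ and verify, using the rigidity inequalities of $T$ and $T'$ and the bound $r<r+1\leq s$, that $T''$ satisfies all inequalities $T''_{b}-T''_{a}\leq l_{Q}(a,b)$; it is then a preprojective rigid module with $\#Q_{0}$ summands, hence lies in $\mathcal{T}_{\mathrm{p}}(Q)$. The reflecting property established above then forces $T>T''>T'$, contradicting that $T\to T'$ is a cover unless $s=r+1$.

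The main obstacle I anticipate is this last step. Checking that $T''$ is tilting amounts to verifying $T''_{b}-T''_{a}\leq l_{Q}(a,b)$ in the four cases $a,b\neq x$, $a=x\neq b$, $b=x\neq a$, and $a=b=x$, where one uses $l_{Q}(x,x)=0$ in the diagonal case, the rigidity of $T$ (and $T_{x}=r$) in the case $a=x$, and the rigidity of $T'$ together with $r+1\leq s$ in the case $b=x$. By contrast the poset-inclusion half is comparatively routine once rigidity has been restated as the single family of inequalities $T_{y}-T_{x}\leq l_{Q}(x,y)$, since both the order-preserving and the reflecting directions then reduce to manipulating these inequalities.
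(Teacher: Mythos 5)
Your proposal is correct, but a head-to-head comparison is not really possible: this paper states Theorem~\ref{t0} as a review of the author's earlier work \cite{K} and contains no proof of it, so your argument should be judged as a self-contained derivation from what the paper does prove, and on that score it succeeds. Part (1) works exactly as you say: since the preprojective component is $\Z_{\leq 0}Q$ under (C), every summand is some $\tau^{-r}P(x)$ with distinct pairs giving non-isomorphic modules, and Proposition~\ref{p} with $l_{Q}(x,x)=0$ (a normalization the paper itself relies on, e.g.\ in the Remark following the theorem, where $P(a)=\tau^{-l_{Q}(a,a)}P(a)$ must be a summand of $T(a)$) rules out two summands over the same vertex, so pigeonhole finishes. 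Your restatement of rigidity as the inequality family $T_{y}-T_{x}\leq l_{Q}(x,y)$ is precisely the mechanism the paper uses implicitly later (the Remark after Theorem~\ref{t0} and the proof of Theorem~\ref{t1} both declare such verifications ``easy to check''), and both the order-preserving and order-reflecting halves of your poset argument are correct. The covering argument, interpolating $T''=M\oplus\tau^{-(r+1)}P(x)$ and checking the four cases of the rigidity inequalities, is also correct: $T''$ is basic, preprojective, rigid with $\#Q_{0}$ pairwise non-isomorphic summands, hence lies in $\mathcal{T}_{\mathrm{p}}(Q)$ by the paper's definition of tilting module, and $T>T''>T'$ contradicts the covering property when $s>r+1$. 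Two citations you should make explicit rather than leave implicit: first, when deriving that contradiction you need that an arrow of $\ptiltq(Q)$ is a covering relation in $(\mathcal{T}_{\mathrm{p}}(Q),\leq)$, which is the content of the Lemma in Section~2 identifying $\ptiltq(Q)$ with the Hasse quiver of $(\mathcal{T}_{\mathrm{p}}(Q),\leq)$ (alternatively, Happel--Unger's theorem that $\tiltq(Q)$ is the Hasse quiver of $\mathcal{T}(Q)$ excludes even non-preprojective intermediates); second, the pairwise non-isomorphy of the modules $\tau^{-r}P(x)$ uses that $kQ$ is representation infinite under (C), which the paper records just before Lemma~\ref{l}. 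Neither is a gap, only a reference to add.
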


\begin{rem}
Assume that $Q$ satisfies the condition $(\mathrm{C})$. We define \[T(a):=\bigoplus_{x\in Q_{0}}\tau^{-l_{Q}(a,x)}P(x)\] for any $a\in Q_{0}$. Then $\tau^{-r}T(a)$ is the minimum element of $\{\mathcal{T}_{\mathrm{p}}(Q)\ni T\simeq \bigoplus_{x\in Q_{0}}\tau^{-r_{x}}P(x)\mid r_{a}\leq r \}$.

\end{rem}
\begin{proof}
Proposition\;\ref{p} shows that $T(a)$ is a minimum element of
$\{T\in \mathcal{T}_{\mathrm{p}}(Q)\mid P(a)\in \add T\}$. Let
$T\simeq \bigoplus_{x\in Q_{0}}\tau^{-r_{x}}P(s) \in \mathcal{T}_{\mathrm{p}}(Q)$ such that $r_{a}\leq r$ and $T^{'}=\bigoplus_{x\in Q_{0}}\tau^{-r^{'}_{x}}P(x)$ where $r^{'}_{x}:=\mathrm{max}\{r_{x},r+l_{Q}(a,x)\}$ for any $x\in Q_{0}$. It is easy to check that $T^{'}\in \mathcal{T}_{\mathrm{p}}(Q)$. Since $r^{'}_{x}\geq r$ for any $x\in Q_{0}$, we have $\tau^{r}T^{'}$ is a basic pre-projective tilting module with $P(a)\in \add T^{'}$. In particular we obtain $\tau^{r}T^{'}\geq T(a)$. Therefore we have $T^{'}\geq \tau^{-r}T(a)$.  
\end{proof}

\section{Main results}

In this section we give our main results. Denote by $\mathcal{Q}$ the set of finite connected  quivers without loops or oriented cycles.  First we show the following Theorem.

\begin{thm}
\label{t1}
Let $Q\in \mathcal{Q}$. Then
$\mathcal{T}_{\mathrm{p}}(Q)$ is an infinite distributive lattice if and only if $Q$ satisfies the condition $(\mathrm{C})$. 

\end{thm}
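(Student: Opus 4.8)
The plan is to treat the two implications separately. The forward implication is a clean sublattice argument; the converse I would prove by contraposition, where the representation-infinite case is the real difficulty.

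For sufficiency, assume $Q$ satisfies $(\mathrm{C})$. By Theorem \ref{t0} the map $\bigoplus_{x}\tau^{-r_x}P(x)\mapsto(r_x)_{x\in Q_0}$ is a poset inclusion $\mathcal{T}_{\mathrm{p}}(Q)\hookrightarrow(\mathbb{Z}_{\ge 0}^{Q_0},\leq^{\mathrm{op}})$, and by Proposition \ref{p} a tuple $(r_x)$ lies in the image precisely when $\Ext^{1}_{kQ}(\tau^{-r_i}P(i),\tau^{-r_j}P(j))=0$ for all $i,j$, i.e.\ when $r_i\le r_j+l_Q(j,i)$ for all $i,j$; call this set $S$, so that $\mathcal{T}_{\mathrm{p}}(Q)\cong S$ as posets. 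The ambient poset $(\mathbb{Z}_{\ge 0}^{Q_0},\leq^{\mathrm{op}})$ is a distributive lattice, being a product of chains, in which the join is the coordinatewise minimum and the meet is the coordinatewise maximum. The key step is to verify that $S$ is closed under both operations. For $(r_x),(r'_x)\in S$ put $m_x=\max(r_x,r'_x)$ and $n_x=\min(r_x,r'_x)$; fixing $i,j$ and taking $m_i=r_i$ without loss of generality, $m_i=r_i\le r_j+l_Q(j,i)\le m_j+l_Q(j,i)$, while taking $n_j=r_j$ gives $n_i\le r_i\le r_j+l_Q(j,i)=n_j+l_Q(j,i)$. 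Hence $S$ is a sublattice of a distributive lattice, so $\mathcal{T}_{\mathrm{p}}(Q)\cong S$ is distributive; and since $(r,\dots,r)\in S$ for every $r\ge 0$, it is infinite.

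For necessity I would argue the contrapositive. Suppose $(\mathrm{C})$ fails and fix $a\in Q_0$ with $\delta(a)\le 1$. If $kQ$ is representation-finite (equivalently $Q$ is Dynkin, which includes the degenerate case of a single vertex forced by connectedness when $\delta(a)=0$), then $\mathcal{T}_{\mathrm{p}}(Q)=\mathcal{T}(Q)$ is a finite set and in particular not an infinite lattice. This reduces everything to the case $\delta(a)=1$ with $kQ$ representation-infinite, which is where the statement has content.

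In that remaining case I would locate the exact point at which Theorem \ref{t0}$(1)$ uses $(\mathrm{C})$. Under $(\mathrm{C})$ one has $l_Q(i,i)=0$, so Proposition \ref{p} forces $\Ext^{1}_{kQ}(\tau^{-s}P(i),\tau^{-r}P(i))\neq 0$ whenever $s>r$; this is precisely what prevents two indecomposables of a single $\tau$-orbit from lying in one tilting module and yields the one-summand-per-orbit normal form. At a degree-one vertex $a$ this obstruction should disappear: I expect to find $r<s$ with $\Ext^{1}_{kQ}(\tau^{-s}P(a),\tau^{-r}P(a))=0$ as well, producing a preprojective tilting module with two summands in the $a$-orbit and none in some other orbit. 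Playing such an irregular tilting module against the regular ones $\bigoplus_x\tau^{-r_x}P(x)$ should then exhibit either a pair with no greatest lower bound or a sublattice isomorphic to the pentagon $N_5$, so that $\mathcal{T}_{\mathrm{p}}(Q)$ fails to be a distributive lattice. I expect this last step to be the main obstacle: away from $(\mathrm{C})$ one loses the uniform vanishing bounds of Proposition \ref{p}, so identifying the irregular tilting modules requires a hands-on analysis of $\Hom$- and $\Ext$-groups along the $a$-orbit in the preprojective component near the boundary vertex, followed by a careful verification that the resulting configuration violates distributivity.
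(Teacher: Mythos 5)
Your sufficiency direction is complete and is essentially the paper's own argument: under $(\mathrm{C})$ you use Theorem \ref{t0} to identify $\mathcal{T}_{\mathrm{p}}(Q)$ with the set $S=\{(r_x)\in\mathbb{Z}_{\geq 0}^{Q_0}\mid r_i\leq r_j+l_Q(j,i)\ \forall i,j\}$ via Proposition \ref{p}, and then check that $S$ is closed under coordinatewise $\min$ and $\max$, hence is an (infinite) sublattice of the distributive lattice $(\mathbb{Z}_{\geq 0}^{Q_0},\leq^{\mathrm{op}})$; this is exactly the verification the paper dismisses as ``easy to check'', and your explicit inequalities are fine. Your reduction of the converse to the representation-infinite case (if $kQ$ is representation-finite then $\mathcal{T}_{\mathrm{p}}(Q)$ is finite, hence not an infinite distributive lattice) is also correct, and is in fact spelled out more carefully than in the paper, whose pentagon construction tacitly presupposes that the relevant $\tau^{-r}P(i)$ are nonzero.

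The genuine gap is in the remaining case --- $Q$ representation-infinite with a vertex of degree one --- which is the actual content of the converse, and which you leave entirely as a plan (``I expect to find $r<s$ with $\Ext^1_{kQ}(\tau^{-s}P(a),\tau^{-r}P(a))=0$'', ``should then exhibit \dots a pentagon'', ``I expect this last step to be the main obstacle''). Your strategy is precisely the paper's, but the construction and the two verifications it requires are the proof, and they are absent. Concretely, for a degree-one source $s$ with unique neighbour $x$ (the sink case is dual), the paper proves $\Ext^{1}_{kQ}(\tau^{-2}P(s),P(s))=0$: by Auslander--Reiten duality and Proposition \ref{arthm}(1) this dimension equals $\dim\Hom_{kQ}(P(s),\tau^{-1}P(s))$, and because $\delta(s)=1$ the almost split sequence starting at $P(s)$ is $0\to P(s)\to\tau^{-1}P(x)\to\tau^{-1}P(s)\to 0$, so Proposition \ref{arthm}(3) combined with Corollary \ref{c1} (applied to $\Ext^1(\tau^{-2}P(z),P(s))$ for arrows $x\to z$, where acyclicity and $\delta(s)=1$ force $l_Q(s,z)\geq 2$) makes all terms cancel. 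This is exactly where $\delta(s)=1$ enters, and it produces the ``irregular'' tilting module $Y=P(s)\oplus\tau^{-2}P(s)\oplus C$ with two summands in the $s$-orbit, where $C=(\oplus_{i\in I}\tau^{-2}P(i))\oplus(\oplus_{i\notin I}\tau^{-1}P(i))$ and $I$ is the set of successors of $x$; Corollary \ref{c1} then also guarantees that $T=P(s)\oplus\tau^{-1}P(x)\oplus C$, $X_1=\tau^{-1}P(s)\oplus\tau^{-1}P(x)\oplus C$, $X_2=\tau^{-1}P(s)\oplus\tau^{-2}P(x)\oplus C$ and $T'=\tau^{-2}P(s)\oplus\tau^{-2}P(x)\oplus C$ are preprojective tilting modules, and the covering relations $T\to Y\to T'$, $T\to X_1\to X_2\to T'$ form a pentagon $N_5$, which kills distributivity (two elements covered by a common element must join to it, dually for meets). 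Until you carry out this Ext computation, the choice of $C$, and the pentagon check --- or some substitute for them --- your converse direction is a conjecture, not a proof.
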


\begin{proof}
First we assume that  $Q$ doesn't satisfy the condition $(\mathrm{C})$. Then one of the following holds,\\
(a)\;there is a source $s$ in $Q$ such that $\delta(s)=1$,\\
(b)\;there is a sink $s$ in $Q$ such that $\delta(s)=1$.

In the case (a), let $x$ be the unique direct successor of $s$. We
denote by $I$ the set of successors of $x$. Let $C:=(\oplus_{i\in I}\tau^{-2}P(i))\oplus (\oplus_{i\not\in I}\tau^{-1}P(i))$. Then we consider following five modules $T:=P(s)\oplus\tau^{-1}P(x)\oplus C$, $T^{'}:=\tau^{-2}P(s)\oplus \tau^{-2}P(x)\oplus C$, $X_{1}:=\tau^{-1}P(s)\oplus \tau^{-1}P(x)\oplus C$, $X_{2}:=\tau^{-1}P(s)\oplus \tau^{-2}P(x)\oplus C$ and $Y:=P(s)\oplus \tau^{-2}P(s)\oplus C$.  We note that Corollary\;\ref{c1} implies $T$, $T^{'}$, $X_{1}$ and $X_{2}$ are in $\mathcal{T}_{\mathrm{p}}$. We also note that $\Ext^{1}_{kQ}(\tau^{-2}P(s),P(s))=0$. Indeed \[\begin{array}{lll}
\mathrm{dim\;}\Ext^{1}_{kQ}(\tau^{-2}P(s),P(s)) & = & \mathrm{dim\;}\Hom_{kQ}(P(s),\tau^{-1}P(s)) \\ 
 & = & \mathrm{dim\;}\Hom_{kQ}(P(s),\tau^{-1}P(x))-\mathrm{dim\;}\Hom_{kQ}(P(s),P(s))\ \\ 
 & = & \mathrm{dim\;}\Hom_{kQ}(P(s),\bigoplus_{y\rightarrow x}P(y))\\
 & & +\mathrm{dim\;}\Hom_{kQ}(P(s),\bigoplus_{z\leftarrow x}\tau^{-1}P(z))\\
 & & -\mathrm{dim\;}\Hom_{kQ}(P(s),P(x))\\
 & = & \mathrm{dim\;}\Hom_{kQ}(P(s),\bigoplus_{y\rightarrow x}P(y))\\
 & &+ \mathrm{dim\;}\Ext^{1}_{kQ}(\bigoplus_{z\leftarrow x}\tau^{-2}P(z),P(s))\\
 & &-\mathrm{dim\;}\Hom_{kQ}(P(s),P(x))\\
 & = & 0.
\end{array} \]
 Therefore we obtain $Y\in \mathcal{T}_{\mathrm{p}}(Q)$. Since there is the following diagram in $\ptiltq(Q)$,
 \[%WinTpicVersion3.08
\unitlength 0.1in
\begin{picture}( 14.6500, 13.8000)( 20.7000,-20.1000)
% STR 2 0 3 0
% 3 2730 700 2730 800 2 0
% $T$
\put(27.3000,-8.0000){\makebox(0,0)[lb]{$T$}}%
% VECTOR 2 0 3 0
% 2 2690 830 2190 1220
% 
\special{pn 8}%
\special{pa 2690 830}%
\special{pa 2190 1220}%
\special{fp}%
\special{sh 1}%
\special{pa 2190 1220}%
\special{pa 2256 1196}%
\special{pa 2232 1188}%
\special{pa 2230 1164}%
\special{pa 2190 1220}%
\special{fp}%
% STR 2 0 3 0
% 3 2070 1310 2070 1410 2 0
% $Y$
\put(20.7000,-14.1000){\makebox(0,0)[lb]{$Y$}}%
% VECTOR 2 0 3 0
% 2 2190 1460 2730 2010
% 
\special{pn 8}%
\special{pa 2190 1460}%
\special{pa 2730 2010}%
\special{fp}%
\special{sh 1}%
\special{pa 2730 2010}%
\special{pa 2698 1948}%
\special{pa 2694 1972}%
\special{pa 2670 1976}%
\special{pa 2730 2010}%
\special{fp}%
% STR 2 0 3 0
% 3 3470 960 3470 1060 2 0
% $X_{1}$
\put(34.7000,-10.6000){\makebox(0,0)[lb]{$X_{1}$}}%
% STR 2 0 3 0
% 3 3470 1650 3470 1750 2 0
% $X_{2}$
\put(34.7000,-17.5000){\makebox(0,0)[lb]{$X_{2}$}}%
% STR 2 0 3 0
% 3 2750 1980 2750 2080 2 0
% $T^{'}$
\put(27.5000,-20.8000){\makebox(0,0)[lb]{$T^{'}$}}%
% VECTOR 2 0 3 0
% 2 2901 790 3448 967
% 
\special{pn 8}%
\special{pa 2902 790}%
\special{pa 3448 968}%
\special{fp}%
\special{sh 1}%
\special{pa 3448 968}%
\special{pa 3392 928}%
\special{pa 3398 952}%
\special{pa 3378 966}%
\special{pa 3448 968}%
\special{fp}%
% VECTOR 2 0 3 0
% 2 3530 1100 3530 1540
% 
\special{pn 8}%
\special{pa 3530 1100}%
\special{pa 3530 1540}%
\special{fp}%
\special{sh 1}%
\special{pa 3530 1540}%
\special{pa 3550 1474}%
\special{pa 3530 1488}%
\special{pa 3510 1474}%
\special{pa 3530 1540}%
\special{fp}%
% VECTOR 2 0 3 0
% 2 3450 1700 2910 2010
% 
\special{pn 8}%
\special{pa 3450 1700}%
\special{pa 2910 2010}%
\special{fp}%
\special{sh 1}%
\special{pa 2910 2010}%
\special{pa 2978 1994}%
\special{pa 2956 1984}%
\special{pa 2958 1960}%
\special{pa 2910 2010}%
\special{fp}%
\end{picture}%\] 
  $\mathcal{T}_{\mathrm{p}} $ is not
 a distributive lattice.
 
 Similarly, in the case (b), we obtain that $\mathcal{T}_{\mathrm{p}}(Q) $ is not
 a distributive lattice.
 
 Next we assume  $Q$ satisfies the condition (C). Then Theorem\;\ref{t0} implies that $\mathcal{T}_{\mathrm{p}}$ is an infinite distributive lattice. Indeed it is easy to check that
for any basic pre-projective modules $T\simeq\bigoplus_{x\in Q_{0}}\tau^{-r_{x}}P(x)$ and $T^{'}\simeq \bigoplus_{x\in Q_{0}}\tau^{-r^{'}_{x}}P(x)$, both $\bigoplus_{x\in Q_{0}}\tau^{-\mathrm{min}\{r_{x},r^{'}_{x}\}}P(x)$ and $\bigoplus_{x\in Q_{0}}\tau^{-\mathrm{max}\{r_{x},r^{'}_{x}\}}P(x)$ are also basic pre-projective tilting modules\;(Remark.\;Let $a:=(r_{x})_{x\in Q_{0}},b=(r^{'}_{x})_{x\in Q_{0}}\in \mathbb{Z}_{\geq 0}^{Q_{0}}$. Then it is obvious that $a\vee b=(\mathrm{min}\{r_{x},r^{'}_{x}\})_{x\in Q_{0}}$ and $a\wedge b=(\mathrm{max}\{r_{x},r^{'}_{x}\})_{x\in Q_{0}}$ in the distributive lattice $(\mathbb{Z}^{Q_{0}},\leq^{\mathrm{op}})$.).

\end{proof}

\begin{exmp}
We give three examples of $\ptiltq(Q)$.

(1)\;Let $Q$ be the quiver:
\[%WinTpicVersion3.08
\unitlength 0.1in
\begin{picture}(  8.6000,  8.6000)(  7.7000,-12.3000)
% CIRCLE 2 0 3 0
% 4 800 800 800 830 800 830 800 830
% 
\special{pn 8}%
\special{ar 800 800 30 30  0.0000000 6.2831853}%
% CIRCLE 2 0 3 0
% 4 1200 400 1200 430 1200 430 1200 430
% 
\special{pn 8}%
\special{ar 1200 400 30 30  0.0000000 6.2831853}%
% CIRCLE 2 0 3 0
% 4 1600 800 1600 830 1600 830 1600 830
% 
\special{pn 8}%
\special{ar 1600 800 30 30  0.0000000 6.2831853}%
% CIRCLE 2 0 3 0
% 4 1200 1200 1200 1230 1200 1230 1200 1230
% 
\special{pn 8}%
\special{ar 1200 1200 30 30  0.0000000 6.2831853}%
% VECTOR 2 0 3 0
% 2 840 760 1170 430
% 
\special{pn 8}%
\special{pa 840 760}%
\special{pa 1170 430}%
\special{fp}%
\special{sh 1}%
\special{pa 1170 430}%
\special{pa 1110 464}%
\special{pa 1132 468}%
\special{pa 1138 492}%
\special{pa 1170 430}%
\special{fp}%
% VECTOR 2 0 3 0
% 2 840 840 1170 1170
% 
\special{pn 8}%
\special{pa 840 840}%
\special{pa 1170 1170}%
\special{fp}%
\special{sh 1}%
\special{pa 1170 1170}%
\special{pa 1138 1110}%
\special{pa 1132 1132}%
\special{pa 1110 1138}%
\special{pa 1170 1170}%
\special{fp}%
% VECTOR 2 0 3 0
% 2 1240 1160 1570 830
% 
\special{pn 8}%
\special{pa 1240 1160}%
\special{pa 1570 830}%
\special{fp}%
\special{sh 1}%
\special{pa 1570 830}%
\special{pa 1510 864}%
\special{pa 1532 868}%
\special{pa 1538 892}%
\special{pa 1570 830}%
\special{fp}%
% VECTOR 2 0 3 0
% 2 1240 430 1570 760
% 
\special{pn 8}%
\special{pa 1240 430}%
\special{pa 1570 760}%
\special{fp}%
\special{sh 1}%
\special{pa 1570 760}%
\special{pa 1538 700}%
\special{pa 1532 722}%
\special{pa 1510 728}%
\special{pa 1570 760}%
\special{fp}%
\end{picture}%\]

Then $\ptiltq(Q)$ is given by the following:
\[%WinTpicVersion3.08
\unitlength 0.1in
\begin{picture}(  8.5000, 20.3000)(  9.8000,-26.1000)
% CIRCLE 2 0 3 0
% 4 1800 600 1800 620 1800 620 1800 620
% 
\special{pn 8}%
\special{ar 1800 600 20 20  0.0000000 6.2831853}%
% VECTOR 2 0 3 0
% 2 1770 620 1440 780
% 
\special{pn 8}%
\special{pa 1770 620}%
\special{pa 1440 780}%
\special{fp}%
\special{sh 1}%
\special{pa 1440 780}%
\special{pa 1510 770}%
\special{pa 1488 758}%
\special{pa 1492 734}%
\special{pa 1440 780}%
\special{fp}%
% CIRCLE 2 0 3 0
% 4 1400 810 1400 830 1400 830 1400 830
% 
\special{pn 8}%
\special{ar 1400 810 20 20  0.0000000 6.2831853}%
% VECTOR 2 0 3 0
% 2 1370 830 1040 990
% 
\special{pn 8}%
\special{pa 1370 830}%
\special{pa 1040 990}%
\special{fp}%
\special{sh 1}%
\special{pa 1040 990}%
\special{pa 1110 980}%
\special{pa 1088 968}%
\special{pa 1092 944}%
\special{pa 1040 990}%
\special{fp}%
% CIRCLE 2 0 3 0
% 4 1000 1000 1000 1020 1000 1020 1000 1020
% 
\special{pn 8}%
\special{ar 1000 1000 20 20  0.0000000 6.2831853}%
% CIRCLE 2 0 3 0
% 4 1800 1000 1800 1020 1800 1020 1800 1020
% 
\special{pn 8}%
\special{ar 1800 1000 20 20  0.0000000 6.2831853}%
% VECTOR 2 0 3 0
% 2 1770 1020 1440 1180
% 
\special{pn 8}%
\special{pa 1770 1020}%
\special{pa 1440 1180}%
\special{fp}%
\special{sh 1}%
\special{pa 1440 1180}%
\special{pa 1510 1170}%
\special{pa 1488 1158}%
\special{pa 1492 1134}%
\special{pa 1440 1180}%
\special{fp}%
% CIRCLE 2 0 3 0
% 4 1400 1210 1400 1230 1400 1230 1400 1230
% 
\special{pn 8}%
\special{ar 1400 1210 20 20  0.0000000 6.2831853}%
% VECTOR 2 0 3 0
% 2 1370 1230 1040 1390
% 
\special{pn 8}%
\special{pa 1370 1230}%
\special{pa 1040 1390}%
\special{fp}%
\special{sh 1}%
\special{pa 1040 1390}%
\special{pa 1110 1380}%
\special{pa 1088 1368}%
\special{pa 1092 1344}%
\special{pa 1040 1390}%
\special{fp}%
% CIRCLE 2 0 3 0
% 4 1000 1400 1000 1420 1000 1420 1000 1420
% 
\special{pn 8}%
\special{ar 1000 1400 20 20  0.0000000 6.2831853}%
% CIRCLE 2 0 3 0
% 4 1800 1400 1800 1420 1800 1420 1800 1420
% 
\special{pn 8}%
\special{ar 1800 1400 20 20  0.0000000 6.2831853}%
% VECTOR 2 0 3 0
% 2 1770 1420 1440 1580
% 
\special{pn 8}%
\special{pa 1770 1420}%
\special{pa 1440 1580}%
\special{fp}%
\special{sh 1}%
\special{pa 1440 1580}%
\special{pa 1510 1570}%
\special{pa 1488 1558}%
\special{pa 1492 1534}%
\special{pa 1440 1580}%
\special{fp}%
% CIRCLE 2 0 3 0
% 4 1400 1610 1400 1630 1400 1630 1400 1630
% 
\special{pn 8}%
\special{ar 1400 1610 20 20  0.0000000 6.2831853}%
% VECTOR 2 0 3 0
% 2 1370 1630 1040 1790
% 
\special{pn 8}%
\special{pa 1370 1630}%
\special{pa 1040 1790}%
\special{fp}%
\special{sh 1}%
\special{pa 1040 1790}%
\special{pa 1110 1780}%
\special{pa 1088 1768}%
\special{pa 1092 1744}%
\special{pa 1040 1790}%
\special{fp}%
% CIRCLE 2 0 3 0
% 4 1000 1800 1000 1820 1000 1820 1000 1820
% 
\special{pn 8}%
\special{ar 1000 1800 20 20  0.0000000 6.2831853}%
% CIRCLE 2 0 3 0
% 4 1800 1800 1800 1820 1800 1820 1800 1820
% 
\special{pn 8}%
\special{ar 1800 1800 20 20  0.0000000 6.2831853}%
% VECTOR 2 0 3 0
% 2 1770 1820 1440 1980
% 
\special{pn 8}%
\special{pa 1770 1820}%
\special{pa 1440 1980}%
\special{fp}%
\special{sh 1}%
\special{pa 1440 1980}%
\special{pa 1510 1970}%
\special{pa 1488 1958}%
\special{pa 1492 1934}%
\special{pa 1440 1980}%
\special{fp}%
% CIRCLE 2 0 3 0
% 4 1400 2010 1400 2030 1400 2030 1400 2030
% 
\special{pn 8}%
\special{ar 1400 2010 20 20  0.0000000 6.2831853}%
% VECTOR 2 0 3 0
% 2 1370 2030 1040 2190
% 
\special{pn 8}%
\special{pa 1370 2030}%
\special{pa 1040 2190}%
\special{fp}%
\special{sh 1}%
\special{pa 1040 2190}%
\special{pa 1110 2180}%
\special{pa 1088 2168}%
\special{pa 1092 2144}%
\special{pa 1040 2190}%
\special{fp}%
% CIRCLE 2 0 3 0
% 4 1000 2200 1000 2220 1000 2220 1000 2220
% 
\special{pn 8}%
\special{ar 1000 2200 20 20  0.0000000 6.2831853}%
% VECTOR 2 0 3 0
% 2 1430 830 1760 980
% 
\special{pn 8}%
\special{pa 1430 830}%
\special{pa 1760 980}%
\special{fp}%
\special{sh 1}%
\special{pa 1760 980}%
\special{pa 1708 934}%
\special{pa 1712 958}%
\special{pa 1692 972}%
\special{pa 1760 980}%
\special{fp}%
% VECTOR 2 0 3 0
% 2 1050 1030 1380 1180
% 
\special{pn 8}%
\special{pa 1050 1030}%
\special{pa 1380 1180}%
\special{fp}%
\special{sh 1}%
\special{pa 1380 1180}%
\special{pa 1328 1134}%
\special{pa 1332 1158}%
\special{pa 1312 1172}%
\special{pa 1380 1180}%
\special{fp}%
% VECTOR 2 0 3 0
% 2 1450 1230 1780 1380
% 
\special{pn 8}%
\special{pa 1450 1230}%
\special{pa 1780 1380}%
\special{fp}%
\special{sh 1}%
\special{pa 1780 1380}%
\special{pa 1728 1334}%
\special{pa 1732 1358}%
\special{pa 1712 1372}%
\special{pa 1780 1380}%
\special{fp}%
% VECTOR 2 0 3 0
% 2 1050 1430 1380 1580
% 
\special{pn 8}%
\special{pa 1050 1430}%
\special{pa 1380 1580}%
\special{fp}%
\special{sh 1}%
\special{pa 1380 1580}%
\special{pa 1328 1534}%
\special{pa 1332 1558}%
\special{pa 1312 1572}%
\special{pa 1380 1580}%
\special{fp}%
% VECTOR 2 0 3 0
% 2 1450 1630 1780 1780
% 
\special{pn 8}%
\special{pa 1450 1630}%
\special{pa 1780 1780}%
\special{fp}%
\special{sh 1}%
\special{pa 1780 1780}%
\special{pa 1728 1734}%
\special{pa 1732 1758}%
\special{pa 1712 1772}%
\special{pa 1780 1780}%
\special{fp}%
% VECTOR 2 0 3 0
% 2 1040 1830 1370 1980
% 
\special{pn 8}%
\special{pa 1040 1830}%
\special{pa 1370 1980}%
\special{fp}%
\special{sh 1}%
\special{pa 1370 1980}%
\special{pa 1318 1934}%
\special{pa 1322 1958}%
\special{pa 1302 1972}%
\special{pa 1370 1980}%
\special{fp}%
% VECTOR 2 0 3 0
% 2 1460 2030 1790 2180
% 
\special{pn 8}%
\special{pa 1460 2030}%
\special{pa 1790 2180}%
\special{fp}%
\special{sh 1}%
\special{pa 1790 2180}%
\special{pa 1738 2134}%
\special{pa 1742 2158}%
\special{pa 1722 2172}%
\special{pa 1790 2180}%
\special{fp}%
% CIRCLE 2 0 3 0
% 4 1810 2200 1810 2220 1810 2220 1810 2220
% 
\special{pn 8}%
\special{ar 1810 2200 20 20  0.0000000 6.2831853}%
% VECTOR 2 0 3 0
% 2 1040 2220 1360 2380
% 
\special{pn 8}%
\special{pa 1040 2220}%
\special{pa 1360 2380}%
\special{fp}%
\special{sh 1}%
\special{pa 1360 2380}%
\special{pa 1310 2332}%
\special{pa 1312 2356}%
\special{pa 1292 2368}%
\special{pa 1360 2380}%
\special{fp}%
% VECTOR 2 0 3 0
% 2 1770 2210 1440 2380
% 
\special{pn 8}%
\special{pa 1770 2210}%
\special{pa 1440 2380}%
\special{fp}%
\special{sh 1}%
\special{pa 1440 2380}%
\special{pa 1508 2368}%
\special{pa 1488 2356}%
\special{pa 1490 2332}%
\special{pa 1440 2380}%
\special{fp}%
% DOT 2 0 3 0
% 3 1410 2400 1410 2500 1410 2610
% 
\special{pn 8}%
\special{sh 1}%
\special{ar 1410 2400 10 10 0  6.28318530717959E+0000}%
\special{sh 1}%
\special{ar 1410 2500 10 10 0  6.28318530717959E+0000}%
\special{sh 1}%
\special{ar 1410 2610 10 10 0  6.28318530717959E+0000}%
\end{picture}%\]

(2)\;Let $Q$ be the quiver:
\[%WinTpicVersion3.08
\unitlength 0.1in
\begin{picture}( 10.1200,  9.7100)( 12.1000,-12.9100)
% CIRCLE 2 0 3 0
% 4 1610 800 1620 830 2260 880 2260 880
% 
\special{pn 8}%
\special{ar 1610 800 32 32  0.0000000 6.2831853}%
% CIRCLE 2 0 3 0
% 4 2190 800 2200 830 2840 880 2840 880
% 
\special{pn 8}%
\special{ar 2190 800 32 32  0.0000000 6.2831853}%
% VECTOR 2 0 3 0
% 2 1680 770 2080 770
% 
\special{pn 8}%
\special{pa 1680 770}%
\special{pa 2080 770}%
\special{fp}%
\special{sh 1}%
\special{pa 2080 770}%
\special{pa 2014 750}%
\special{pa 2028 770}%
\special{pa 2014 790}%
\special{pa 2080 770}%
\special{fp}%
% VECTOR 2 0 3 0
% 2 1680 830 2080 830
% 
\special{pn 8}%
\special{pa 1680 830}%
\special{pa 2080 830}%
\special{fp}%
\special{sh 1}%
\special{pa 2080 830}%
\special{pa 2014 810}%
\special{pa 2028 830}%
\special{pa 2014 850}%
\special{pa 2080 830}%
\special{fp}%
% CIRCLE 2 0 3 0
% 4 1241 351 1257 325 880 -195 880 -195
% 
\special{pn 8}%
\special{ar 1242 352 32 32  0.0000000 6.2831853}%
% VECTOR 2 0 3 0
% 2 1548 759 1289 455
% 
\special{pn 8}%
\special{pa 1548 760}%
\special{pa 1290 456}%
\special{fp}%
\special{sh 1}%
\special{pa 1290 456}%
\special{pa 1318 520}%
\special{pa 1324 496}%
\special{pa 1348 494}%
\special{pa 1290 456}%
\special{fp}%
% VECTOR 2 0 3 0
% 2 1594 720 1335 416
% 
\special{pn 8}%
\special{pa 1594 720}%
\special{pa 1336 416}%
\special{fp}%
\special{sh 1}%
\special{pa 1336 416}%
\special{pa 1364 480}%
\special{pa 1370 458}%
\special{pa 1394 454}%
\special{pa 1336 416}%
\special{fp}%
% CIRCLE 2 0 3 0
% 4 1242 1259 1212 1248 762 1704 762 1704
% 
\special{pn 8}%
\special{ar 1242 1260 32 32  0.0000000 6.2831853}%
% VECTOR 2 0 3 0
% 2 1594 888 1337 1194
% 
\special{pn 8}%
\special{pa 1594 888}%
\special{pa 1338 1194}%
\special{fp}%
\special{sh 1}%
\special{pa 1338 1194}%
\special{pa 1396 1156}%
\special{pa 1372 1154}%
\special{pa 1366 1130}%
\special{pa 1338 1194}%
\special{fp}%
% VECTOR 2 0 3 0
% 2 1547 850 1290 1155
% 
\special{pn 8}%
\special{pa 1548 850}%
\special{pa 1290 1156}%
\special{fp}%
\special{sh 1}%
\special{pa 1290 1156}%
\special{pa 1348 1118}%
\special{pa 1324 1114}%
\special{pa 1318 1092}%
\special{pa 1290 1156}%
\special{fp}%
\end{picture}%\]

Then $\ptiltq(Q)$ is given by the following:
\[%WinTpicVersion3.08
\unitlength 0.1in
\begin{picture}( 12.6000, 34.0300)(  7.7000,-37.7300)
% CIRCLE 2 0 3 0
% 4 1200 400 1200 430 1200 430 1200 430
% 
\special{pn 8}%
\special{ar 1200 400 30 30  0.0000000 6.2831853}%
% VECTOR 2 0 3 0
% 2 1160 430 850 570
% 
\special{pn 8}%
\special{pa 1160 430}%
\special{pa 850 570}%
\special{fp}%
\special{sh 1}%
\special{pa 850 570}%
\special{pa 920 562}%
\special{pa 900 548}%
\special{pa 904 524}%
\special{pa 850 570}%
\special{fp}%
% CIRCLE 2 0 3 0
% 4 800 600 800 630 800 630 800 630
% 
\special{pn 8}%
\special{ar 800 600 30 30  0.0000000 6.2831853}%
% CIRCLE 2 0 3 0
% 4 1800 600 1800 630 1800 630 1800 630
% 
\special{pn 8}%
\special{ar 1800 600 30 30  0.0000000 6.2831853}%
% CIRCLE 2 0 3 0
% 4 1400 800 1400 830 1400 830 1400 830
% 
\special{pn 8}%
\special{ar 1400 800 30 30  0.0000000 6.2831853}%
% CIRCLE 2 0 3 0
% 4 1200 1200 1200 1230 1200 1230 1200 1230
% 
\special{pn 8}%
\special{ar 1200 1200 30 30  0.0000000 6.2831853}%
% CIRCLE 2 0 3 0
% 4 800 1400 800 1430 800 1430 800 1430
% 
\special{pn 8}%
\special{ar 800 1400 30 30  0.0000000 6.2831853}%
% CIRCLE 2 0 3 0
% 4 1400 1600 1400 1630 1400 1630 1400 1630
% 
\special{pn 8}%
\special{ar 1400 1600 30 30  0.0000000 6.2831853}%
% CIRCLE 2 0 3 0
% 4 1800 1400 1800 1430 1800 1430 1800 1430
% 
\special{pn 8}%
\special{ar 1800 1400 30 30  0.0000000 6.2831853}%
% VECTOR 2 0 3 0
% 2 850 630 1350 790
% 
\special{pn 8}%
\special{pa 850 630}%
\special{pa 1350 790}%
\special{fp}%
\special{sh 1}%
\special{pa 1350 790}%
\special{pa 1294 752}%
\special{pa 1300 774}%
\special{pa 1280 790}%
\special{pa 1350 790}%
\special{fp}%
% VECTOR 2 0 3 0
% 2 1250 410 1740 570
% 
\special{pn 8}%
\special{pa 1250 410}%
\special{pa 1740 570}%
\special{fp}%
\special{sh 1}%
\special{pa 1740 570}%
\special{pa 1684 530}%
\special{pa 1690 554}%
\special{pa 1670 568}%
\special{pa 1740 570}%
\special{fp}%
% VECTOR 2 0 3 0
% 2 1770 620 1440 780
% 
\special{pn 8}%
\special{pa 1770 620}%
\special{pa 1440 780}%
\special{fp}%
\special{sh 1}%
\special{pa 1440 780}%
\special{pa 1510 770}%
\special{pa 1488 758}%
\special{pa 1492 734}%
\special{pa 1440 780}%
\special{fp}%
% VECTOR 2 0 3 0
% 2 1200 450 1200 1150
% 
\special{pn 8}%
\special{pa 1200 450}%
\special{pa 1200 1150}%
\special{fp}%
\special{sh 1}%
\special{pa 1200 1150}%
\special{pa 1220 1084}%
\special{pa 1200 1098}%
\special{pa 1180 1084}%
\special{pa 1200 1150}%
\special{fp}%
% VECTOR 2 0 3 0
% 2 800 650 800 1340
% 
\special{pn 8}%
\special{pa 800 650}%
\special{pa 800 1340}%
\special{fp}%
\special{sh 1}%
\special{pa 800 1340}%
\special{pa 820 1274}%
\special{pa 800 1288}%
\special{pa 780 1274}%
\special{pa 800 1340}%
\special{fp}%
% VECTOR 2 0 3 0
% 2 1800 640 1800 1350
% 
\special{pn 8}%
\special{pa 1800 640}%
\special{pa 1800 1350}%
\special{fp}%
\special{sh 1}%
\special{pa 1800 1350}%
\special{pa 1820 1284}%
\special{pa 1800 1298}%
\special{pa 1780 1284}%
\special{pa 1800 1350}%
\special{fp}%
% VECTOR 2 0 3 0
% 2 1240 1220 1750 1390
% 
\special{pn 8}%
\special{pa 1240 1220}%
\special{pa 1750 1390}%
\special{fp}%
\special{sh 1}%
\special{pa 1750 1390}%
\special{pa 1694 1350}%
\special{pa 1700 1374}%
\special{pa 1680 1388}%
\special{pa 1750 1390}%
\special{fp}%
% VECTOR 2 0 3 0
% 2 1170 1230 850 1380
% 
\special{pn 8}%
\special{pa 1170 1230}%
\special{pa 850 1380}%
\special{fp}%
\special{sh 1}%
\special{pa 850 1380}%
\special{pa 920 1370}%
\special{pa 898 1358}%
\special{pa 902 1334}%
\special{pa 850 1380}%
\special{fp}%
% VECTOR 2 0 3 0
% 2 850 1430 1340 1600
% 
\special{pn 8}%
\special{pa 850 1430}%
\special{pa 1340 1600}%
\special{fp}%
\special{sh 1}%
\special{pa 1340 1600}%
\special{pa 1284 1560}%
\special{pa 1290 1584}%
\special{pa 1270 1598}%
\special{pa 1340 1600}%
\special{fp}%
% VECTOR 2 0 3 0
% 2 1770 1430 1450 1590
% 
\special{pn 8}%
\special{pa 1770 1430}%
\special{pa 1450 1590}%
\special{fp}%
\special{sh 1}%
\special{pa 1450 1590}%
\special{pa 1520 1578}%
\special{pa 1498 1566}%
\special{pa 1502 1542}%
\special{pa 1450 1590}%
\special{fp}%
% VECTOR 2 0 3 0
% 2 1400 840 1400 1550
% 
\special{pn 8}%
\special{pa 1400 840}%
\special{pa 1400 1550}%
\special{fp}%
\special{sh 1}%
\special{pa 1400 1550}%
\special{pa 1420 1484}%
\special{pa 1400 1498}%
\special{pa 1380 1484}%
\special{pa 1400 1550}%
\special{fp}%
% CIRCLE 2 0 3 0
% 4 1400 1940 1400 1970 1400 1970 1400 1970
% 
\special{pn 8}%
\special{ar 1400 1940 30 30  0.0000000 6.2831853}%
% VECTOR 2 0 3 0
% 2 1360 1970 1050 2110
% 
\special{pn 8}%
\special{pa 1360 1970}%
\special{pa 1050 2110}%
\special{fp}%
\special{sh 1}%
\special{pa 1050 2110}%
\special{pa 1120 2102}%
\special{pa 1100 2088}%
\special{pa 1104 2064}%
\special{pa 1050 2110}%
\special{fp}%
% CIRCLE 2 0 3 0
% 4 1000 2140 1000 2170 1000 2170 1000 2170
% 
\special{pn 8}%
\special{ar 1000 2140 30 30  0.0000000 6.2831853}%
% CIRCLE 2 0 3 0
% 4 2000 2140 2000 2170 2000 2170 2000 2170
% 
\special{pn 8}%
\special{ar 2000 2140 30 30  0.0000000 6.2831853}%
% CIRCLE 2 0 3 0
% 4 1600 2340 1600 2370 1600 2370 1600 2370
% 
\special{pn 8}%
\special{ar 1600 2340 30 30  0.0000000 6.2831853}%
% CIRCLE 2 0 3 0
% 4 1400 2740 1400 2770 1400 2770 1400 2770
% 
\special{pn 8}%
\special{ar 1400 2740 30 30  0.0000000 6.2831853}%
% CIRCLE 2 0 3 0
% 4 1000 2940 1000 2970 1000 2970 1000 2970
% 
\special{pn 8}%
\special{ar 1000 2940 30 30  0.0000000 6.2831853}%
% CIRCLE 2 0 3 0
% 4 1600 3140 1600 3170 1600 3170 1600 3170
% 
\special{pn 8}%
\special{ar 1600 3140 30 30  0.0000000 6.2831853}%
% CIRCLE 2 0 3 0
% 4 2000 2940 2000 2970 2000 2970 2000 2970
% 
\special{pn 8}%
\special{ar 2000 2940 30 30  0.0000000 6.2831853}%
% VECTOR 2 0 3 0
% 2 1050 2170 1550 2330
% 
\special{pn 8}%
\special{pa 1050 2170}%
\special{pa 1550 2330}%
\special{fp}%
\special{sh 1}%
\special{pa 1550 2330}%
\special{pa 1494 2292}%
\special{pa 1500 2314}%
\special{pa 1480 2330}%
\special{pa 1550 2330}%
\special{fp}%
% VECTOR 2 0 3 0
% 2 1450 1950 1940 2110
% 
\special{pn 8}%
\special{pa 1450 1950}%
\special{pa 1940 2110}%
\special{fp}%
\special{sh 1}%
\special{pa 1940 2110}%
\special{pa 1884 2070}%
\special{pa 1890 2094}%
\special{pa 1870 2108}%
\special{pa 1940 2110}%
\special{fp}%
% VECTOR 2 0 3 0
% 2 1970 2160 1640 2320
% 
\special{pn 8}%
\special{pa 1970 2160}%
\special{pa 1640 2320}%
\special{fp}%
\special{sh 1}%
\special{pa 1640 2320}%
\special{pa 1710 2310}%
\special{pa 1688 2298}%
\special{pa 1692 2274}%
\special{pa 1640 2320}%
\special{fp}%
% VECTOR 2 0 3 0
% 2 1400 1990 1400 2690
% 
\special{pn 8}%
\special{pa 1400 1990}%
\special{pa 1400 2690}%
\special{fp}%
\special{sh 1}%
\special{pa 1400 2690}%
\special{pa 1420 2624}%
\special{pa 1400 2638}%
\special{pa 1380 2624}%
\special{pa 1400 2690}%
\special{fp}%
% VECTOR 2 0 3 0
% 2 1000 2190 1000 2880
% 
\special{pn 8}%
\special{pa 1000 2190}%
\special{pa 1000 2880}%
\special{fp}%
\special{sh 1}%
\special{pa 1000 2880}%
\special{pa 1020 2814}%
\special{pa 1000 2828}%
\special{pa 980 2814}%
\special{pa 1000 2880}%
\special{fp}%
% VECTOR 2 0 3 0
% 2 2000 2180 2000 2890
% 
\special{pn 8}%
\special{pa 2000 2180}%
\special{pa 2000 2890}%
\special{fp}%
\special{sh 1}%
\special{pa 2000 2890}%
\special{pa 2020 2824}%
\special{pa 2000 2838}%
\special{pa 1980 2824}%
\special{pa 2000 2890}%
\special{fp}%
% VECTOR 2 0 3 0
% 2 1440 2760 1950 2930
% 
\special{pn 8}%
\special{pa 1440 2760}%
\special{pa 1950 2930}%
\special{fp}%
\special{sh 1}%
\special{pa 1950 2930}%
\special{pa 1894 2890}%
\special{pa 1900 2914}%
\special{pa 1880 2928}%
\special{pa 1950 2930}%
\special{fp}%
% VECTOR 2 0 3 0
% 2 1370 2770 1050 2920
% 
\special{pn 8}%
\special{pa 1370 2770}%
\special{pa 1050 2920}%
\special{fp}%
\special{sh 1}%
\special{pa 1050 2920}%
\special{pa 1120 2910}%
\special{pa 1098 2898}%
\special{pa 1102 2874}%
\special{pa 1050 2920}%
\special{fp}%
% VECTOR 2 0 3 0
% 2 1050 2970 1540 3140
% 
\special{pn 8}%
\special{pa 1050 2970}%
\special{pa 1540 3140}%
\special{fp}%
\special{sh 1}%
\special{pa 1540 3140}%
\special{pa 1484 3100}%
\special{pa 1490 3124}%
\special{pa 1470 3138}%
\special{pa 1540 3140}%
\special{fp}%
% VECTOR 2 0 3 0
% 2 1970 2970 1650 3130
% 
\special{pn 8}%
\special{pa 1970 2970}%
\special{pa 1650 3130}%
\special{fp}%
\special{sh 1}%
\special{pa 1650 3130}%
\special{pa 1720 3118}%
\special{pa 1698 3106}%
\special{pa 1702 3082}%
\special{pa 1650 3130}%
\special{fp}%
% VECTOR 2 0 3 0
% 2 1600 2380 1600 3090
% 
\special{pn 8}%
\special{pa 1600 2380}%
\special{pa 1600 3090}%
\special{fp}%
\special{sh 1}%
\special{pa 1600 3090}%
\special{pa 1620 3024}%
\special{pa 1600 3038}%
\special{pa 1580 3024}%
\special{pa 1600 3090}%
\special{fp}%
% DOT 2 0 3 0
% 4 1603 3470 1603 3470 1603 3470 1603 3470
% 
\special{pn 8}%
\special{sh 1}%
\special{ar 1604 3470 10 10 0  6.28318530717959E+0000}%
\special{sh 1}%
\special{ar 1604 3470 10 10 0  6.28318530717959E+0000}%
\special{sh 1}%
\special{ar 1604 3470 10 10 0  6.28318530717959E+0000}%
\special{sh 1}%
\special{ar 1604 3470 10 10 0  6.28318530717959E+0000}%
% DOT 2 0 3 0
% 1 1603 3570
% 
\special{pn 8}%
\special{sh 1}%
\special{ar 1604 3570 10 10 0  6.28318530717959E+0000}%
% DOT 2 0 3 0
% 1 1603 3670
% 
\special{pn 8}%
\special{sh 1}%
\special{ar 1604 3670 10 10 0  6.28318530717959E+0000}%
% DOT 2 0 3 0
% 1 1603 3770
% 
\special{pn 8}%
\special{sh 1}%
\special{ar 1604 3770 10 10 0  6.28318530717959E+0000}%
% VECTOR 2 0 3 0
% 2 1400 1660 1400 1860
% 
\special{pn 8}%
\special{pa 1400 1660}%
\special{pa 1400 1860}%
\special{fp}%
\special{sh 1}%
\special{pa 1400 1860}%
\special{pa 1420 1794}%
\special{pa 1400 1808}%
\special{pa 1380 1794}%
\special{pa 1400 1860}%
\special{fp}%
% VECTOR 2 0 3 0
% 2 1600 3210 1600 3410
% 
\special{pn 8}%
\special{pa 1600 3210}%
\special{pa 1600 3410}%
\special{fp}%
\special{sh 1}%
\special{pa 1600 3410}%
\special{pa 1620 3344}%
\special{pa 1600 3358}%
\special{pa 1580 3344}%
\special{pa 1600 3410}%
\special{fp}%
\end{picture}%\]

(3)\;Let $Q$ be the quiver:
\[%WinTpicVersion3.08
\unitlength 0.1in
\begin{picture}( 11.3000,  0.6500)( 11.7000,-10.3000)
% CIRCLE 2 0 3 0
% 4 1200 1000 1200 1030 1200 1030 1200 1030
% 
\special{pn 8}%
\special{ar 1200 1000 30 30  0.0000000 6.2831853}%
% VECTOR 2 0 3 0
% 2 1260 990 1660 990
% 
\special{pn 8}%
\special{pa 1260 990}%
\special{pa 1660 990}%
\special{fp}%
\special{sh 1}%
\special{pa 1660 990}%
\special{pa 1594 970}%
\special{pa 1608 990}%
\special{pa 1594 1010}%
\special{pa 1660 990}%
\special{fp}%
% CIRCLE 2 0 3 0
% 4 1720 1000 1720 1030 1720 1030 1720 1030
% 
\special{pn 8}%
\special{ar 1720 1000 30 30  0.0000000 6.2831853}%
% VECTOR 2 0 3 0
% 2 1780 970 2180 970
% 
\special{pn 8}%
\special{pa 1780 970}%
\special{pa 2180 970}%
\special{fp}%
\special{sh 1}%
\special{pa 2180 970}%
\special{pa 2114 950}%
\special{pa 2128 970}%
\special{pa 2114 990}%
\special{pa 2180 970}%
\special{fp}%
% VECTOR 2 0 3 0
% 2 1780 1020 2180 1020
% 
\special{pn 8}%
\special{pa 1780 1020}%
\special{pa 2180 1020}%
\special{fp}%
\special{sh 1}%
\special{pa 2180 1020}%
\special{pa 2114 1000}%
\special{pa 2128 1020}%
\special{pa 2114 1040}%
\special{pa 2180 1020}%
\special{fp}%
% CIRCLE 2 0 3 0
% 4 2270 1000 2270 1030 2270 1030 2270 1030
% 
\special{pn 8}%
\special{ar 2270 1000 30 30  0.0000000 6.2831853}%
\end{picture}%\]

Then $\ptiltq(Q)$ is given by the following:
\[\input{lat-example-3}\]

\end{exmp}

\begin{lem}
\label{join-irre}
Assume that $Q$ satisfies the condition $\mathrm{(C)}$. Then the set of join-irreducible elements of $\mathcal{T}_{\mathrm{p}}(Q)$ is $\{\tau^{-r}T(a)\mid a\in Q_{0},\ r\in \mathbb{Z}_{\geq 0}^{Q_{0}} \}$.

\end{lem}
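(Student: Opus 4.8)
The plan is to work entirely through the order-embedding $T \mapsto (T_x)_{x \in Q_0} \in (\mathbb{Z}^{Q_0}_{\geq 0}, \leq^{\mathrm{op}})$ of Theorem \ref{t0}, under which (as recorded in the proof of Theorem \ref{t1}) the join and meet of $\mathcal{T}_{\mathrm{p}}(Q)$ are computed coordinatewise: $(U \vee V)_x = \min\{U_x, V_x\}$ and $(U \wedge V)_x = \max\{U_x, V_x\}$. I will use three facts: that $\tau^{-r}T(a)$ corresponds to the tuple $(r + l_Q(a,x))_{x}$ (in particular its $a$-coordinate equals $r$, since $l_Q(a,a)=0$); that by the Remark following Theorem \ref{t0} the module $\tau^{-r}T(a)$ is the minimum element of $\{U \in \mathcal{T}_{\mathrm{p}}(Q) \mid U_a \leq r\}$; and that by Proposition \ref{p} a tuple $(r_x)$ represents a tilting module exactly when $r_i \leq r_j + l_Q(j,i)$ for all $i,j$ (rigidity).

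For the inclusion $\supseteq$, that every $\tau^{-r}T(a)$ is join-irreducible, I would suppose $\tau^{-r}T(a) = U \vee V$ and read off the $a$-coordinate: $r = \min\{U_a, V_a\}$, so after relabelling $U_a = r$. Then $U$ lies in $\{U' \mid U'_a \leq r\}$, so minimality gives $\tau^{-r}T(a) \leq U$; but also $U \leq U \vee V = \tau^{-r}T(a)$, forcing $U = \tau^{-r}T(a)$. Hence one of the two joinands equals $\tau^{-r}T(a)$, which is the definition of join-irreducibility.

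For the inclusion $\subseteq$, that every join-irreducible $T$ has this form, the key step is the decomposition $T = \bigvee_{a \in Q_0} \tau^{-T_a}T(a)$. To verify it I compute the $x$-coordinate of the right-hand side, namely $\min_{a \in Q_0}(T_a + l_Q(a,x))$: rigidity gives $T_x \leq T_a + l_Q(a,x)$ for every $a$, so this minimum is $\geq T_x$, while the term $a = x$ equals $T_x + l_Q(x,x) = T_x$, so the minimum is exactly $T_x$. Since $Q_0$ is finite this is a finite join, and a join-irreducible element that equals a finite join must coincide with one of the joinands; thus $T = \tau^{-T_a}T(a)$ for some $a$, as desired.

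I expect the only real content to be this decomposition identity: recognizing that Proposition \ref{p} (rigidity of $T$) is precisely the inequality collapsing $\min_{a}(T_a + l_Q(a,x))$ down to $T_x$, with the diagonal term $a=x$ supplying the matching bound. Everything else is bookkeeping through the embedding, once one notes that each $\tau^{-T_a}T(a)$ is automatically a genuine pre-projective tilting module because $l_Q$ satisfies the triangle inequality $l_Q(a,i) \leq l_Q(a,j) + l_Q(j,i)$, so that all the joins in question really live inside $\mathcal{T}_{\mathrm{p}}(Q)$.
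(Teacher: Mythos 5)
Your proof is correct, but it takes a genuinely different route from the paper's. The paper argues through the Hasse quiver: by Theorem \ref{t0}, $T$ is join-irreducible exactly when it has a unique direct successor $T'$ in $\ptiltq(Q)$; after normalizing by $\tau$ so that some coordinate of $T$ is zero, a minimality argument on $\{y \in Q_{0} \mid T_{y}=0\}$ forces $T_{a}=0$ for the vertex $a$ with $T'_{a}=T_{a}+1$, whence $T \geq T(a)$, and $T > T(a)$ is excluded by exhibiting a second direct successor along a path from $T$ to $T(a)$. You instead work purely inside the coordinate embedding: join-irreducibility of $\tau^{-r}T(a)$ falls out of the minimality property in the Remark after Theorem \ref{t0}, and the converse inclusion comes from the decomposition $T=\bigvee_{a\in Q_{0}}\tau^{-T_{a}}T(a)$, which you verify by observing that the rigidity inequalities $T_{x}\leq T_{a}+l_{Q}(a,x)$ of Proposition \ref{p} are exactly what collapse $\min_{a}(T_{a}+l_{Q}(a,x))$ to $T_{x}$ (plus the standard induction showing that an element which is join-irreducible for binary joins cannot equal a finite join of strictly smaller elements). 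Two points of comparison: your argument writes out both inclusions, whereas the paper's proof only spells out that every join-irreducible element is some $\tau^{-r}T(a)$, leaving the converse implicit, so yours is if anything more complete; and your decomposition identity is the same one the paper establishes later, in the Corollary following this lemma (there proved via minimality rather than coordinates), so your route front-loads that computation, while the paper's route additionally identifies the unique direct successor of each join-irreducible element, which your argument neither needs nor provides.
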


\begin{proof}
Theorem\;\ref{t0} implies $T\in \mathcal{T}_{\mathrm{p}}(Q)$ is join-irreducible if and only if there is the unique direct successor of $T$ in $\ptiltq(Q)$. Let $T$ be a join irreducible element and $T^{'}$ be the its direct successor. Let $a\in Q_{0}$ such that $T^{'}_{a}=T_{a}+1$. Without loss of generality, we can assume that $T_{x}=0$ for some $x\in Q_{0}$. Then it is suffice to show that $T=T(a)$. 

We now define a partial order $\leq_{Q}$ on $Q_{0}$ as follows:
\[x\leq_{Q} y\stackrel{\mathrm{def}}{\Leftrightarrow} \mathrm{there\ exists\ a\ path\ from\ }x\ \mathrm{to\ }y.\]

Let $b\in Q_{0}$ be a minimal element of $\{y\in Q_{0}\mid T_{y}=0\}$. Then $T^{''}:=\bigoplus_{x\neq b}\tau^{-T_{x}}P(x)\oplus \tau^{-1}P(b)\in \mathcal{T}_{\mathrm{p}}(Q)$. Therefore we obtain $T_{a}=0$. In particular $T\geq T(a)$. Suppose that $T>T(a)$. Then there is a path 
\[T\rightarrow T^{1}\rightarrow T^{2}\rightarrow\cdots \rightarrow T^{r}=T(a).\]
Since $T_{a}=0=T(a)_{a}$, we have $T^{1}\neq T^{'}$. We now get a contradiction.      

\end{proof}
\begin{defn}
We define a poset $J=J(Q)$ as follows:\\
$\bullet\;J=\mathbb{Z}_{\geq 0}\times Q_{0}$ as a set.\\    $\bullet\;(r,a)\leq (s,b)\stackrel{\mathrm{def}}{\Leftrightarrow} l_{Q}(a,x)+r\geq l_{Q}(b,x)+s$ for any $x\in Q_{0}$.

\end{defn}
We set $T(\mathbf{j}):=\tau^{-r}T(x)$ for any $\mathbf{j}=(r,x)\in J$. Note that \[\mathbf{j}_{1}\leq \mathbf{j}_{2}\Leftrightarrow T(\mathbf{j}_{1})\leq T(\mathbf{j}_{2}).\] 

\begin{cor}
Assume that $Q$ satisfies the condition $(\mathrm{C})$. Then 
a map $\rho:\mathcal{I}(Q)\setminus\{\emptyset\}\ni I \mapsto \bigvee_{\mathbf{i}\in I}T(\mathbf{i})\in \mathcal{T}_{\mathrm{p}}(Q)$ induces a poset isomorphism
\[\mathcal{I}(Q)\setminus\{\emptyset\}\simeq \mathcal{T}_{\mathrm{p}}(Q),\]
where $\mathcal{I}(Q)$ be a ideal-poset of $J(Q)$.
\end{cor}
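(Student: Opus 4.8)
The plan is to realize $\rho$ as the infinite analogue of the bijection in Birkhoff's representation theorem (Theorem \ref{brt}), with inverse the map $\sigma:\mathcal{T}_{\mathrm{p}}(Q)\to\mathcal{I}(Q)\setminus\{\emptyset\}$ given by $\sigma(T):=\{\mathbf{j}\in J\mid T(\mathbf{j})\leq T\}$. By Lemma \ref{join-irre} the elements $T(\mathbf{j})$ are precisely the join-irreducibles of $\mathcal{T}_{\mathrm{p}}(Q)$, and by the note following the definition of $J$ the assignment $\mathbf{j}\mapsto T(\mathbf{j})$ is a poset isomorphism of $J=J(Q)$ onto them. Consequently $\sigma(T)$ is automatically down-closed, hence a poset-ideal, and the Remark after Theorem \ref{t0} yields $(T_a,a)\in\sigma(T)$ for every $a\in Q_0$ (take $r=T_a$ there), so $\sigma(T)\neq\emptyset$. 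Both $\rho$ and $\sigma$ are visibly order-preserving once $\rho$ is shown to be well-defined.

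First I would make $\rho$ well-defined and, crucially, reduce its defining join to a \emph{finite} one. Writing $T(\mathbf{i})=\bigoplus_{x}\tau^{-(s+l_Q(b,x))}P(x)$ for $\mathbf{i}=(s,b)$, and recalling from the proof of Theorem \ref{t1} that the embedding $\mathcal{T}_{\mathrm{p}}(Q)\hookrightarrow(\mathbb{Z}^{Q_0}_{\geq 0},\leq^{\mathrm{op}})$ carries $\vee$ to the componentwise minimum of exponents, the candidate join has $x$-exponent $m_x:=\min_{(s,b)\in I}(s+l_Q(b,x))$. Each $m_x$ is a minimum of non-negative integers, hence attained, say at $\mathbf{i}_x\in I$. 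Since $Q_0$ is finite, $F:=\{\mathbf{i}_x\mid x\in Q_0\}\subseteq I$ is finite, and one checks directly on exponents that $\bigvee_{\mathbf{i}\in F}T(\mathbf{i})$ is the least upper bound of $\{T(\mathbf{i})\mid\mathbf{i}\in I\}$. Thus the infinite join exists, equals $\bigvee_{\mathbf{i}\in F}T(\mathbf{i})$, and lies in $\mathcal{T}_{\mathrm{p}}(Q)$ since it is a finite join in the lattice of Theorem \ref{t1}.

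Next I would verify that $\rho$ and $\sigma$ are mutually inverse. For $\rho\circ\sigma=\mathrm{id}$, the $x$-exponent $m_x$ of $\rho(\sigma(T))$ is $\geq T_x$ because every joinand $T(\mathbf{j})\leq T$, and is $\leq T_x$ because $(T_x,x)\in\sigma(T)$ contributes $x$-exponent $T_x+l_Q(x,x)=T_x$; hence $\rho(\sigma(T))=T$. For $\sigma\circ\rho=\mathrm{id}$, the inclusion $I\subseteq\sigma(\rho(I))$ is immediate, and for the reverse I would exploit that $\rho(I)=\bigvee_{\mathbf{i}\in F}T(\mathbf{i})$ is a finite join: since $\mathcal{T}_{\mathrm{p}}(Q)$ is distributive and $T(\mathbf{j})$ is join-irreducible, the identity $T(\mathbf{j})=T(\mathbf{j})\wedge\bigvee_{\mathbf{i}\in F}T(\mathbf{i})=\bigvee_{\mathbf{i}\in F}\bigl(T(\mathbf{j})\wedge T(\mathbf{i})\bigr)$ shows $T(\mathbf{j})$ is join-prime, so $T(\mathbf{j})\leq\rho(I)$ forces $T(\mathbf{j})\leq T(\mathbf{i})$ for some $\mathbf{i}\in F\subseteq I$; then $\mathbf{j}\leq\mathbf{i}\in I$ gives $\mathbf{j}\in I$ by down-closedness.

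The main obstacle is exactly this reverse inclusion $\sigma(\rho(I))\subseteq I$: a priori the minimizer in $m_x=\min_{(s,b)\in I}(s+l_Q(b,x))$ depends on $x$, so $T(\mathbf{j})\leq\rho(I)$ only provides, for each coordinate $x$ separately, \emph{some} element of $I$ dominating $\mathbf{j}$ there, whereas $\mathbf{j}\in I$ requires a \emph{single} element of $I$ dominating $\mathbf{j}$ in all coordinates simultaneously. The finiteness of $Q_0$, which collapses the infinite join to $\bigvee_{\mathbf{i}\in F}T(\mathbf{i})$, together with join-primeness of $T(\mathbf{j})$ in the distributive lattice $\mathcal{T}_{\mathrm{p}}(Q)$, is precisely what bridges this gap; everything else is routine exponent bookkeeping. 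Finally, excluding $\emptyset$ is necessary because the empty join would be a bottom element, which the infinite lattice $\mathcal{T}_{\mathrm{p}}(Q)$ lacks, and $\sigma$ indeed lands in $\mathcal{I}(Q)\setminus\{\emptyset\}$ by the nonemptiness established above.
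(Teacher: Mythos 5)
Your proposal is correct, and at the decisive step it takes a genuinely different route from the paper. The paper never constructs the inverse $\sigma$; it proves directly that $\rho$ is an order-reflecting injection, and its key tool is the Remark after Theorem~\ref{t0}: $T((r,x))=\tau^{-r}T(x)$ is the \emph{minimum} of $\{T\in\mathcal{T}_{\mathrm{p}}(Q)\mid T_{x}\leq r\}$. From $T((r,x))\leq\rho(I^{'})$, comparing only the single coordinate $x$ produces some $\mathbf{i}^{'}=(r^{'},x^{'})\in I^{'}$ with $T(\mathbf{i}^{'})_{x}=r^{'}+l_{Q}(x^{'},x)\leq r$, and the minimality property upgrades this one-coordinate bound to $T((r,x))\leq T(\mathbf{i}^{'})$, hence $(r,x)\in I^{'}$. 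You bridge exactly the same gap --- the one you flag as the main obstacle --- abstractly: after reducing $\rho(I)$ to a finite join, you combine distributivity (Theorem~\ref{t1}) with join-irreducibility (Lemma~\ref{join-irre}) to conclude that $T(\mathbf{j})$ is join-prime, which forces $T(\mathbf{j})\leq T(\mathbf{i})$ for a single $\mathbf{i}\in F\subseteq I$. Both arguments are sound. Yours is the honest Birkhoff-style argument and is, notably, the only one of the two that actually uses Lemma~\ref{join-irre} (the paper's written proof never invokes it, despite the statement being labelled a corollary); it would transfer verbatim to any distributive lattice whose join-irreducibles have been identified and whose joins over ideals reduce to finite ones. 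The paper's argument buys independence from distributivity at this step, trading it for the stronger module-theoretic minimality property, and it handles the infinite join directly (a coordinate minimum over an infinite ideal is still attained in $\mathbb{Z}_{\geq 0}$). Your remaining steps --- well-definedness via finitely many coordinate minimizers (where the paper instead uses that every ideal of $J(Q)$ is generated by finitely many elements) and the identity $\rho(\sigma(T))=T$ (which is precisely the paper's surjectivity computation $T=\bigvee_{x\in Q_{0}}\tau^{-T_{x}}T(x)$) --- agree with the paper's in substance.
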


\begin{proof}
Let $I\in\mathcal{I}(Q)\setminus\{\emptyset\}$. Then it is easy to check that there is a finite subset $\{\mathbf{i}_{1},\cdots \mathbf{i}_{m}\}$ of $I$ such that $I=\{\mathbf{j}\in J\mid \mathbf{j}\leq \mathbf{i}_{t}\ \mathrm{for\ some\ }t \}$. Then $\bigvee_{\mathbf{i}\in I} T(\mathbf{i})=\bigvee_{t=1}^{m}T(\mathbf{i}_{t})$. In particular 
a map $\rho:\mathcal{I}(Q)\setminus\{\emptyset\}\ni I\mapsto \bigvee_{\mathbf{i}\in I} T(\mathbf{i})\in \mathcal{T}_{\mathrm{p}}(Q)$ well-defined. It is obvious that $\rho$ is an order-preserving
map. Let $I,I^{'}\in \mathcal{I}(Q)\setminus\{\emptyset\}$ with $\rho(I)\leq \rho(I^{'})$ and $(r,x)\in I$. Then $T((r,x))\leq \bigvee_{\mathbf{i}\in I} T(\mathbf{i})\leq \bigvee_{\mathbf{i}\in I^{'}} T(\mathbf{i})$ implies $r^{'}+l_{Q}(x^{'},x)\leq r$ for some $\mathbf{i}^{'}:=(r^{'},x^{'})\in I^{'}$.
Since $T((r,x))$ is the minimum element of $\{T\in \mathcal{T}_{\mathrm{p}}(Q)\mid T_{x}\leq r\}$, we obtain $T((r,x))\leq T(\mathbf{i}^{'})$. Therefore we have $(r,x)\leq \mathbf{i}^{'}$. In particular we obtain $(r,x)\in I^{'}$.

We  show that $\rho$ is bijection. If $\rho(I)=\rho(I^{'})$, then $I\subset I^{'}$ and $I^{'}\subset I$. Therefore $\rho$ is injection. Let $T\in \mathcal{T}_{\mathrm{p}}(Q)$. Then it is easy to check that $T=\bigvee_{x\in Q_{0}}\tau^{-T_{x}}T(x)$. Indeed $\tau^{-T_{x}}T(x)$ is the minimum element of $\{T^{'}\mid T^{'}_{x}\leq T_{x}\}$. Therefore we obtain $T\geq \tau^{-T_{x}}T(x)$ for any $x\in Q_{0}$. In particular we have $T\geq \bigvee_{x\in Q_{0}}\tau^{-T_{x}}T(x)$. Since $(\bigvee_{x\in Q_{0}}\tau^{-T_{x}}T(x))_{a}\leq T_{a}$ for any $a\in Q_{0}$, we obtain $T\leq \bigvee_{x\in Q_{0}}\tau^{-T_{x}}T(x)$. Therefore we obtain $\rho(I)=T$ for $I:=\{\mathbf{j}\in J\mid \mathbf{j}\leq (T_{x},x)\ \mathrm{for\ some\ }x\in Q_{0}\}$. In particular $\rho$ is bijection.

\end{proof}

\begin{lem}
For $\mathbf{j}=(r,a)\in \mathbb{Z}_{\geq 0}\times Q_{0}$,  set $P(\mathbf{j}):=\tau^{-r}P(a)$.
Then $\mathbf{j}_{1}\leq \mathbf{j}_{2}$ if and only if there is a path from $P(\mathbf{j}_{2})$ to $P(\mathbf{j}_{1})$ 
in $\mathrm{Auslander}$-$\mathrm{Reiten}$ quiver of $kQ$. 

\end{lem}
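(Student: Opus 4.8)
The plan is to collapse both sides of the stated equivalence to the single inequality $r_{1}\geq r_{2}+l_{Q}(a_{2},a_{1})$, where $\mathbf{j}_{t}=(r_{t},a_{t})$.

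First I would read off the arrows of the pre-projective component of the Auslander--Reiten quiver from the mesh recurrence displayed before Lemma~\ref{l}. Applying $\Ext^{1}(-,Y)$ to the almost split sequence $0\to\tau^{-r+1}P(x)\to E\to\tau^{-r}P(x)\to 0$, that recurrence identifies the middle term as $E=\bigoplus_{\alpha:s(\alpha)=x}\tau^{-r}P(t(\alpha))\oplus\bigoplus_{\alpha:t(\alpha)=x}\tau^{-r+1}P(s(\alpha))$. Hence, writing $(r,a)$ for $\tau^{-r}P(a)$, every arrow of the component has one of two shapes: a \emph{horizontal} arrow $(r,y)\to(r,x)$ coming from an arrow $x\to y$ of $Q$, or a \emph{shift} arrow $(r-1,z)\to(r,x)$ coming from an arrow $z\to x$ of $Q$ (the projective case $r=0$ leaving only the horizontal arrows, i.e.\ the radical inclusions). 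That these exhaust the arrows rests on the pre-projective component being the full translation quiver $\Z_{\leq 0}Q$ under $(\mathrm{C})$.

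Next I would translate a directed path in the component into a walk in the doubled quiver $\tilde{Q}$. A horizontal step $(r,y)\to(r,x)$ traverses the reverse arrow $-\alpha:y\to x$ (with $c^{+}=0$ and $r$ unchanged), while a shift step $(r-1,z)\to(r,x)$ traverses the forward arrow $\alpha:z\to x$ (with $c^{+}=1$ and $r$ increased by one). Thus an Auslander--Reiten path from $P(\mathbf{j}_{2})$ to $P(\mathbf{j}_{1})$ is precisely a $\tilde{Q}$-path $w$ from $a_{2}$ to $a_{1}$ with $c^{+}(w)=r_{1}-r_{2}$; all intermediate $r$-values stay $\geq r_{2}\geq 0$, so lifting a $\tilde{Q}$-path back to the component is unobstructed. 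The minimum of $c^{+}$ over such paths is $l_{Q}(a_{2},a_{1})$, and I can inflate $c^{+}$ by any positive amount by inserting at the start vertex $a_{2}$ the closed detour $a_{2}\to v\to a_{2}$ built from a forward and a reverse pass along an arrow incident to $a_{2}$ (such an arrow exists since $\delta(a_{2})\geq 2$ under $(\mathrm{C})$). Hence the achievable values of $c^{+}$ are exactly the integers $\geq l_{Q}(a_{2},a_{1})$, so the Auslander--Reiten path exists iff $r_{1}-r_{2}\geq l_{Q}(a_{2},a_{1})$.

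Finally I would match this with the order on $J$. By definition $\mathbf{j}_{1}\leq\mathbf{j}_{2}$ means $r_{1}-r_{2}\geq l_{Q}(a_{2},x)-l_{Q}(a_{1},x)$ for every $x\in Q_{0}$, i.e.\ $r_{1}-r_{2}\geq\max_{x}\bigl(l_{Q}(a_{2},x)-l_{Q}(a_{1},x)\bigr)$. The triangle inequality $l_{Q}(a_{2},x)\leq l_{Q}(a_{2},a_{1})+l_{Q}(a_{1},x)$, obtained by concatenating minimal $\tilde{Q}$-paths, bounds each summand by $l_{Q}(a_{2},a_{1})$, while equality is attained at $x=a_{1}$ because $l_{Q}(a_{1},a_{1})=0$; thus the maximum equals $l_{Q}(a_{2},a_{1})$. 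Combining the two reductions gives the equivalence. I expect the main obstacle to be the first step: correctly extracting the two arrow types from the $\Ext$-recurrence and verifying they are all the arrows, together with the inflation argument ensuring that every value of $c^{+}$ above the minimum is realized.
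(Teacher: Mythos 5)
Your proof is correct and takes essentially the same route as the paper's: both rest on identifying the arrows of the preprojective component $\Z_{\leq 0}Q$ (the horizontal and shift arrows), translating paths in the Auslander--Reiten quiver into walks in $\tilde{Q}$ weighted by $c^{+}$, and reducing the order on $J$ to the single inequality $r_{1}-r_{2}\geq l_{Q}(a_{2},a_{1})$. The paper merely organizes this differently---verifying the defining inequalities of $\leq$ arrow-by-arrow (plus transitivity) for the forward direction, and padding the lifted minimal walk with a path from $\tau^{-s-t}P(a)$ down to $\tau^{-r}P(a)$ rather than inserting detours at the start---but the substance is identical.
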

\begin{proof}
Let $\mathbf{j}_{1}=(r,a)$ and $\mathbf{j}_{2}=(s,b)$. First we assume that there exists an arrow $P(\mathbf{j}_{2})\rightarrow P(\mathbf{j}_{1})$ in $\Gamma(kQ)$. Then we have $(1)$\;$a\rightarrow b$ in $Q$ and $r=s$ or $(2)$\;$b\rightarrow a$ and $r=s+1$.  In both of two cases, we have $l_{Q}(a,x)+r\geq l_{Q}(b,x)+s$ for any $x\in Q_{0}$.

Next we assume that $\mathbf{j}_{1}\leq \mathbf{j}_{2}$ and let $t:=l_{Q}(b,a)$. Then we have  $r\geq s+t$. By definition of $l_{Q}$, we can take a sub-quiver
\[b\leftarrow \cdots \leftarrow b_{1}\rightarrow a_{1}\leftarrow\cdots \leftarrow b_{2}\rightarrow a_{2}\cdots b_{t}\rightarrow a_{t}\leftarrow \cdots \leftarrow a \] of $Q$.
In particular we obtain a path
\[P(\mathbf{j}_{2})\rightarrow \cdots \rightarrow \tau^{-s}P(b_{1})\rightarrow \tau^{-s-1}P(a_{1})\rightarrow\cdots \rightarrow \tau^{-s-t}P(a_{t})\rightarrow \cdots \rightarrow \tau^{-s-t}P(a)\]
in $\Gamma(kQ)$.
Now $r\geq s+t$ implies that there is a path from $\tau^{-s-t}P(a)$ to $\tau^{-r}P(a)=P(\mathbf{j}_{1})$ in $\Gamma(kQ)$.
\end{proof}

\begin{defn}
For any acyclic quiver $\Gamma$, we define a poset $\mathcal{P}(\Gamma)$ as follows:\\
$\bullet$\;$\mathcal{P}(\Gamma)=\Gamma_{0}$ as a set.\\
$\bullet$\;$x\leq y$ if there is a path from $y$ to $x$ in $\Gamma$.

\end{defn}

\begin{cor}
Let $\Gamma_{\mathrm{p}}(Q)$ be the pre-projective component of $\mathrm{Auslander}$-$\mathrm{Reiten}$ quiver of $kQ$. 
Then  the poset $\mathcal{T}_{\mathrm{p}}(Q)$ is isomorphic to $\mathcal{I}(\mathcal{P}(\Gamma_{\mathrm{p}}(Q)))\setminus\{\emptyset\}$.
\end{cor}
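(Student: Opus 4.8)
The plan is to realize the desired isomorphism as a composite of two identifications already in hand, so that essentially no new computation is needed. First I would record that, since $Q$ satisfies $(\mathrm{C})$, the vertices of the pre-projective component $\Gamma_{\mathrm{p}}(Q)$ are exactly the modules $\tau^{-r}P(a)$ with $r\in\mathbb{Z}_{\geq 0}$ and $a\in Q_{0}$, each occurring once; this is the description of the pre-projective part of the Auslander-Reiten quiver as the translation quiver $\mathbb{Z}_{\leq 0}Q$ recalled before Lemma~\ref{l}. Hence $\mathbf{j}=(r,a)\mapsto P(\mathbf{j})=\tau^{-r}P(a)$ is a bijection from the underlying set $\mathbb{Z}_{\geq 0}\times Q_{0}$ of $J(Q)$ onto the vertex set of $\Gamma_{\mathrm{p}}(Q)$, hence onto the underlying set of $\mathcal{P}(\Gamma_{\mathrm{p}}(Q))$.

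Next I would upgrade this bijection to a poset isomorphism $J(Q)\simeq\mathcal{P}(\Gamma_{\mathrm{p}}(Q))$. By the definition of $\mathcal{P}$, one has $P(\mathbf{j}_{1})\leq P(\mathbf{j}_{2})$ in $\mathcal{P}(\Gamma_{\mathrm{p}}(Q))$ precisely when there is a path from $P(\mathbf{j}_{2})$ to $P(\mathbf{j}_{1})$ in the Auslander-Reiten quiver. The preceding Lemma asserts that this happens if and only if $\mathbf{j}_{1}\leq\mathbf{j}_{2}$ in $J(Q)$. Because the lemma is an ``if and only if'', the bijection is simultaneously order-preserving and order-reflecting, so it is the required isomorphism of posets.

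The remaining steps are formal. Isomorphic posets have isomorphic ideal-posets, so $\mathcal{I}(\mathcal{P}(\Gamma_{\mathrm{p}}(Q)))\simeq\mathcal{I}(J(Q))=\mathcal{I}(Q)$, and this isomorphism sends the empty ideal to the empty ideal; deleting it gives $\mathcal{I}(\mathcal{P}(\Gamma_{\mathrm{p}}(Q)))\setminus\{\emptyset\}\simeq\mathcal{I}(Q)\setminus\{\emptyset\}$. Finally I would invoke the preceding Corollary, which under $(\mathrm{C})$ provides the poset isomorphism $\mathcal{I}(Q)\setminus\{\emptyset\}\simeq\mathcal{T}_{\mathrm{p}}(Q)$ given by $I\mapsto\bigvee_{\mathbf{i}\in I}T(\mathbf{i})$. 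Composing the three identifications yields $\mathcal{T}_{\mathrm{p}}(Q)\simeq\mathcal{I}(\mathcal{P}(\Gamma_{\mathrm{p}}(Q)))\setminus\{\emptyset\}$, as claimed.

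I do not expect a genuine obstacle: the argument is a chain of equivalences established earlier, and the only point that repays a moment's attention is the translation of ``path in $\Gamma_{\mathrm{p}}(Q)$'' into ``order relation in $J(Q)$'', which is exactly the content of the preceding Lemma, together with the bijectivity of $\mathbf{j}\mapsto P(\mathbf{j})$, which relies on $(\mathrm{C})$ via the shape of the pre-projective component. One could even take this corollary as a reformulation that makes the combinatorial nature of $\mathcal{T}_{\mathrm{p}}(Q)$ manifest, $J(Q)$ being literally the poset $\mathcal{P}(\Gamma_{\mathrm{p}}(Q))$ under the above identification.
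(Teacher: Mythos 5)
Your proof is correct and is exactly the argument the paper intends: the preceding lemma identifies $J(Q)$ with $\mathcal{P}(\Gamma_{\mathrm{p}}(Q))$ as posets (both orders being characterized by paths in the Auslander--Reiten quiver, using that under condition $(\mathrm{C})$ the pre-projective component is $\mathbb{Z}_{\leq 0}Q$ so that $\mathbf{j}\mapsto P(\mathbf{j})$ is bijective), and composing the induced isomorphism of ideal-posets with the preceding corollary $\mathcal{I}(Q)\setminus\{\emptyset\}\simeq\mathcal{T}_{\mathrm{p}}(Q)$ gives the claim. The paper leaves this chain implicit, and your write-up fills it in faithfully.
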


\begin{exmp}
Let $Q$ be a quiver:
\[%WinTpicVersion3.08
\unitlength 0.1in
\begin{picture}( 17.5000,  1.5000)( 10.0000, -7.8000)
% STR 2 0 3 0
% 3 1000 710 1000 810 2 0
% $Q:$
\put(10.0000,-8.1000){\makebox(0,0)[lb]{$Q:$}}%
% STR 2 0 3 0
% 3 1350 700 1350 800 2 0
% $0$
\put(13.5000,-8.0000){\makebox(0,0)[lb]{$0$}}%
% VECTOR 2 0 3 0
% 2 1560 775 1960 775
% 
\special{pn 8}%
\special{pa 1560 776}%
\special{pa 1960 776}%
\special{fp}%
\special{sh 1}%
\special{pa 1960 776}%
\special{pa 1894 756}%
\special{pa 1908 776}%
\special{pa 1894 796}%
\special{pa 1960 776}%
\special{fp}%
% VECTOR 2 0 3 0
% 2 1560 725 1960 725
% 
\special{pn 8}%
\special{pa 1560 726}%
\special{pa 1960 726}%
\special{fp}%
\special{sh 1}%
\special{pa 1960 726}%
\special{pa 1894 706}%
\special{pa 1908 726}%
\special{pa 1894 746}%
\special{pa 1960 726}%
\special{fp}%
% STR 2 0 3 0
% 3 2050 710 2050 810 2 0
% $1$
\put(20.5000,-8.1000){\makebox(0,0)[lb]{$1$}}%
% VECTOR 2 0 3 0
% 2 2260 775 2660 775
% 
\special{pn 8}%
\special{pa 2260 776}%
\special{pa 2660 776}%
\special{fp}%
\special{sh 1}%
\special{pa 2660 776}%
\special{pa 2594 756}%
\special{pa 2608 776}%
\special{pa 2594 796}%
\special{pa 2660 776}%
\special{fp}%
% VECTOR 2 0 3 0
% 2 2260 725 2660 725
% 
\special{pn 8}%
\special{pa 2260 726}%
\special{pa 2660 726}%
\special{fp}%
\special{sh 1}%
\special{pa 2660 726}%
\special{pa 2594 706}%
\special{pa 2608 726}%
\special{pa 2594 746}%
\special{pa 2660 726}%
\special{fp}%
% STR 2 0 3 0
% 3 2750 710 2750 810 2 0
% $2$
\put(27.5000,-8.1000){\makebox(0,0)[lb]{$2$}}%
\end{picture}%\] 
Then $\mathcal{I}(\mathcal{P}(\Gamma_{\mathrm{p}}(kQ)))\setminus\{\emptyset\}$ is given by the following:
\[\input{dr-example-4}\]
\end{exmp}

Let $L$ be an infinite distributive lattice with the maximum element $o$ and $\tau^{-1}$ be a lattice inclusion $L\rightarrow L$ which induces a quiver inclusion
 $\overrightarrow{L}\rightarrow \overrightarrow{L}$. 
\begin{defn}
Let $\sim$ be an equivalence relation on $\overrightarrow{L}_{1}$ generated by the following:\\
(a)$\alpha\sim \tau^{-1}\alpha$.\\
(b)$\alpha\sim \beta$ if there is a full sub-quiver $S(\alpha,\beta)$ of $\overrightarrow{L}$. 
\[%WinTpicVersion3.08
\unitlength 0.1in
\begin{picture}( 13.8500,  9.2000)( 15.1000, -9.9000)
% STR 2 0 3 0
% 3 2170 260 2170 360 2 0
% $s(\alpha)$
\put(21.7000,-3.6000){\makebox(0,0)[lb]{$s(\alpha)$}}%
% VECTOR 2 0 3 0
% 2 2480 300 2750 300
% 
\special{pn 8}%
\special{pa 2480 300}%
\special{pa 2750 300}%
\special{fp}%
\special{sh 1}%
\special{pa 2750 300}%
\special{pa 2684 280}%
\special{pa 2698 300}%
\special{pa 2684 320}%
\special{pa 2750 300}%
\special{fp}%
% STR 2 0 3 0
% 3 2830 260 2830 360 2 0
% $t(\alpha)$
\put(28.3000,-3.6000){\makebox(0,0)[lb]{$t(\alpha)$}}%
% STR 2 0 3 0
% 3 2590 140 2590 240 2 0
% $\alpha$
\put(25.9000,-2.4000){\makebox(0,0)[lb]{$\alpha$}}%
% VECTOR 2 0 3 0
% 2 2320 420 2320 790
% 
\special{pn 8}%
\special{pa 2320 420}%
\special{pa 2320 790}%
\special{fp}%
\special{sh 1}%
\special{pa 2320 790}%
\special{pa 2340 724}%
\special{pa 2320 738}%
\special{pa 2300 724}%
\special{pa 2320 790}%
\special{fp}%
% VECTOR 2 0 3 0
% 2 2490 920 2760 920
% 
\special{pn 8}%
\special{pa 2490 920}%
\special{pa 2760 920}%
\special{fp}%
\special{sh 1}%
\special{pa 2760 920}%
\special{pa 2694 900}%
\special{pa 2708 920}%
\special{pa 2694 940}%
\special{pa 2760 920}%
\special{fp}%
% STR 2 0 3 0
% 3 2180 860 2180 960 2 0
% $s(\beta)$
\put(21.8000,-9.6000){\makebox(0,0)[lb]{$s(\beta)$}}%
% STR 2 0 3 0
% 3 2830 880 2830 980 2 0
% $t(\beta)$
\put(28.3000,-9.8000){\makebox(0,0)[lb]{$t(\beta)$}}%
% STR 2 0 3 0
% 3 2590 1060 2590 1160 2 0
% $\beta$
\put(25.9000,-11.6000){\makebox(0,0)[lb]{$\beta$}}%
% VECTOR 2 0 3 0
% 2 2890 420 2890 790
% 
\special{pn 8}%
\special{pa 2890 420}%
\special{pa 2890 790}%
\special{fp}%
\special{sh 1}%
\special{pa 2890 790}%
\special{pa 2910 724}%
\special{pa 2890 738}%
\special{pa 2870 724}%
\special{pa 2890 790}%
\special{fp}%
% STR 2 0 3 0
% 3 1510 570 1510 670 2 0
% $S(\alpha,\beta)$
\put(15.1000,-6.7000){\makebox(0,0)[lb]{$S(\alpha,\beta)$}}%
\end{picture}%\] 
 
Then we put $\Lambda=\Lambda(L,\tau^{-1}):=\overrightarrow{L}_{1}/\!\raisebox{-2pt}{$\sim$}$.
\end{defn}

Let $P:=L\setminus \tau^{-1}L$.
\begin{prop}
\label{p1}
Assume that $(L,\tau^{-1})$ satisfies the following conditions,\\
$(\mathrm{c}_{0})$\;$P\neq \emptyset$ is finite.\\
$(\mathrm{c}_{1})$\;$L=\coprod_{r\geq 0}\tau^{-r}P$.\\
$(\mathrm{c}_{2})$\;$x\not< \tau^{-r}y$ for any $x,y\in P$ and $r>0$.        \\
$(\mathrm{c}_{3})$\;$x\rightarrow y$ implies $y\geq \tau^{-1}x$.\\
$(\mathrm{c}_{4})$\;For any $x\in P_{0}$ there exists a path $w:x\stackrel{\alpha_{1}}{\rightarrow}x_{1}\stackrel{\alpha_{2}}{\rightarrow}\cdots \stackrel{\alpha_{r}}{\rightarrow}\tau^{-1}x$
in $\overrightarrow{L}$ such that $\{\alpha_{i}\}_{1\leq i\leq r}$ is a minimal representable of $\Lambda$.

Then the following assertions hold.\\
$(1)$\; If $x>y$ in $L$, then there is a path from $x$ to $y$ in $\overrightarrow{L}$. In particular $\overrightarrow{L}$ is a connected quiver. \\
$(2)$\; For any path $w:x_{0}\stackrel{\alpha_{1}}{\rightarrow}x_{1}\stackrel{\alpha_{2}}{\rightarrow}\cdots \stackrel{\alpha_{r}}{\rightarrow}x_{r}$ in $\overrightarrow{L}$ and $\lambda\in \Lambda$ we put $\phi(w,\lambda):=\#\{i\mid \alpha_{i}/\!\raisebox{-2pt}{$\sim$}=\lambda\}$. If $s(w)=s(w^{'})$ and $t(w)=t(w^{'})$, then 
$\phi(w,\lambda)=\phi(w^{'},\lambda)$.\\
$(3)$\;For any $x\in L$ we put $\phi(x):=(\phi(w,\lambda))_{\lambda\in \Lambda}$, where $w$ is a path from $o$ to $x$.
Then $\phi$ induces a quiver inclusion from  $\overrightarrow{L}$ to the Hasse-quiver of the poset $(\mathbb{Z}_{\geq 0}^{\Lambda},\leq^{\mathrm{op}})$.\\
$(4)$\;Let $x,y\in L$. Then $x<y$ if and only if $\phi(x)\leq^{\mathrm{op}}\phi(y)$   \\
$(5)$\;Let $x,y\in L$ and $\lambda\in \Lambda$. Then $\phi(x\vee y)_{\lambda}=\mathrm{min}\{\phi(x)_{\lambda},\phi(y)_{\lambda}\}$ and $\phi(x\wedge y)_{\lambda}=\mathrm{max}\{\phi(x)_{\lambda},\phi(y)_{\lambda}\}$.
\end{prop}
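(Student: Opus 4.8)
The plan is to prove that $\phi$ is a lattice embedding of $(L,\leq)$ into $(\mathbb{Z}_{\geq 0}^{\Lambda},\leq^{\mathrm{op}})$, in direct analogy with Theorem\;\ref{t0}; assertions $(1)$--$(5)$ then come out in a fixed order, the only genuinely combinatorial inputs being the local finiteness of $L$ and the path-independence in $(2)$. First I would record a local-finiteness lemma. By $(\mathrm{c}_1)$ every element has a well-defined level $N(x)=r$ with $x\in\tau^{-r}P$; each level has at most $|P|$ elements, since $\tau^{-1}$ is a poset inclusion (hence injective) and $P$ is finite by $(\mathrm{c}_0)$; and $(\mathrm{c}_2)$, together with the fact that $\tau^{-1}$ reflects the order, forces $u<v\Rightarrow N(v)\leq N(u)$. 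Consequently, for $y\leq x$ every $z\in[y,x]$ satisfies $N(x)\leq N(z)\leq N(y)$, so $[y,x]$ meets only finitely many levels and is finite. Granting this, $(1)$ is immediate: if $x>y$, choose a maximal chain in the finite distributive lattice $[y,x]$; it is saturated and yields a directed path $x\to\cdots\to y$ in $\overrightarrow{L}$. Since $o$ is the maximum, every $x$ receives a path from $o$, so $\overrightarrow{L}$ is connected.

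For $(2)$ I would use the Birkhoff picture. Every interval $[y,x]$ is a finite distributive lattice, so its covering pairs carry the standard join-irreducible labelling: a covering pair $v>u$ is labelled by the unique join-irreducible $j\leq v$ with $j\not\leq u$, and the multiset of labels along a maximal chain of $[y,x]$ equals $\{\,j:\ j\leq s(w),\ j\not\leq t(w)\,\}$, independent of the chain. It remains to observe that each generating relation of $\sim$ relates edges whose labels agree modulo the $\tau^{-1}$-shift: relation $(\mathrm{b})$ equates the two parallel edges of a diamond $S(\alpha,\beta)$, which in a distributive square carry the \emph{same} join-irreducible label, while $(\mathrm{a})$ identifies the label $j$ with $\tau^{-1}j$. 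Hence $\phi(w,\lambda)$ is the sum, over the finitely many labels lying in the class $\lambda$, of counts each independent of $w$; this proves $(2)$ and shows $\phi(x):=(\phi(w,\lambda))_{\lambda}$ (with $w$ any path from $o$ to $x$) is well defined. I expect the main obstacle to sit here, in the supporting fact that the label-multiset of a maximal chain is a chain-invariant of a finite distributive lattice (equivalently, that any two maximal chains are connected through elementary diamond moves, each of which preserves every $\phi(\cdot,\lambda)$ by $(\mathrm{b})$); this is standard but is the one step that must be handled with care.

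The remaining assertions follow formally. For $(3)$: an edge $x\to y$ extends a path from $o$ to $x$, so $\phi(y)=\phi(x)+e_{[\alpha]}$, which is exactly a covering step of $(\mathbb{Z}_{\geq 0}^{\Lambda},\leq^{\mathrm{op}})$; combined with the injectivity proved below, this makes $\phi$ a quiver inclusion. The forward half of $(4)$ is monotonicity: if $x<y$ then by $(1)$ a path from $y$ to $x$ gives $\phi(x)=\phi(y)+(\text{a nonnegative vector})$, i.e.\ $\phi(x)\leq^{\mathrm{op}}\phi(y)$, and the total count $\sum_{\lambda}\phi(\cdot)_{\lambda}$ strictly increases as one descends, so $\phi$ separates strictly comparable elements.

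For $(5)$ I would invoke the distributive product decomposition $[x\wedge y,\,x\vee y]\cong[x\wedge y,\,x]\times[x\wedge y,\,y]$: reading colour counts across the two factors and using $(2)$ yields the valuation identity $\phi(x\vee y)_{\lambda}+\phi(x\wedge y)_{\lambda}=\phi(x)_{\lambda}+\phi(y)_{\lambda}$; together with the monotonicity bounds $\phi(x\vee y)_{\lambda}\leq\min\{\phi(x)_{\lambda},\phi(y)_{\lambda}\}$ and $\phi(x\wedge y)_{\lambda}\geq\max\{\phi(x)_{\lambda},\phi(y)_{\lambda}\}$ this forces equality in both, which is $(5)$. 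Injectivity is then a corollary: if $\phi(x)=\phi(y)$ then $\phi(x\wedge y)=\max=\phi(x)$, and since $x\wedge y\leq x$ with equal total counts forces $x\wedge y=x$, we get $x\leq y$ and symmetrically $y\leq x$. Finally the reverse half of $(4)$: $\phi(x)\leq^{\mathrm{op}}\phi(y)$ means $\max\{\phi(x),\phi(y)\}=\phi(x)$, so by $(5)$ $\phi(x\wedge y)=\phi(x)$, whence $x\wedge y=x$ and $x\leq y$. I note that $(\mathrm{c}_3)$ and $(\mathrm{c}_4)$ are not strictly required for $(1)$--$(5)$ as stated; their role is to pin down the finer structure (that $\Lambda$ is finite and that $\tau^{-1}$ adds the all-ones vector), which is what identifies the ambient lattice with the intended one and feeds the subsequent reconstruction of $Q$.
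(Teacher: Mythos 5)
Your local-finiteness lemma and parts $(1)$, $(2)$ are sound, and your route to $(2)$ via the join-irreducible labelling of covers in the finite intervals $[y,x]$ is a genuinely different and cleaner argument than the paper's induction on path length with diamond exchanges. But the proposal breaks down at $(5)$, and since you derive injectivity (hence $(3)$) and the reverse implication of $(4)$ from $(5)$, the failure propagates to all three. Concretely: from the valuation identity $\phi(x\vee y)_{\lambda}+\phi(x\wedge y)_{\lambda}=\phi(x)_{\lambda}+\phi(y)_{\lambda}$ together with the bounds $\phi(x\vee y)_{\lambda}\leq m:=\min\{\phi(x)_{\lambda},\phi(y)_{\lambda}\}$ and $\phi(x\wedge y)_{\lambda}\geq M:=\max\{\phi(x)_{\lambda},\phi(y)_{\lambda}\}$ you conclude equality, but $A+B=m+M$, $A\leq m$, $B\geq M$ is also satisfied by $A=m-1$, $B=M+1$; nothing is forced. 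What is actually needed is that the $\sim$-classes occurring on a descent from $x\vee y$ to $x$ are \emph{disjoint} from those occurring on a descent from $x\vee y$ to $y$. In your Birkhoff picture the two label sets are disjoint sets of join-irreducibles, but the relation $\sim$ (through relation $(\mathrm{a})$, and through diamond chains leaving the interval) can identify a label of one set with a label of the other; ruling this out is exactly the content of the paper's ``$I=\emptyset$'' argument in $(5)$ and of the base case $r=1$ of its injectivity proof in $(3)$, and both steps use $(c_{3})$ and $(c_{4})$ essentially.

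This also refutes your closing remark that $(c_{3})$ and $(c_{4})$ are not needed for $(1)$--$(5)$; since your proofs of $(3)$--$(5)$ never invoke them, they cannot be complete. Counterexample: let $L=\{(a,b)\in\mathbb{Z}_{\geq 0}^{2}\mid |a-b|\leq 2\}$ with the componentwise-opposite order $\leq^{\mathrm{op}}$ (so $o=(0,0)$ is the maximum), and $\tau^{-1}(a,b):=(b+1,a+1)$. This is an infinite distributive lattice, $\tau^{-1}$ is a lattice inclusion mapping covers to covers, $P=\{(0,0),(1,0),(2,0),(0,1),(0,2)\}$ is finite, $\tau^{-r}P=\{(a,b)\in L\mid \min\{a,b\}=r\}$, and $(c_{0})$, $(c_{1})$, $(c_{2})$ all hold; but $(c_{3})$ fails (for the arrow $(1,0)\rightarrow(2,0)$ one has $(2,0)\not\geq\tau^{-1}(1,0)=(1,2)$) and $(c_{4})$ fails ($(2,0)$ and $\tau^{-1}(2,0)=(1,3)$ are incomparable, so no path exists at all). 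Here the squares force all horizontal arrows with fixed first coordinate $a$ into one class $H_{a}$ and all vertical arrows with fixed source second coordinate $b$ into one class $V_{b}$, while relation $(\mathrm{a})$ gives $H_{a}\sim V_{a+1}$ and $V_{b}\sim H_{b+1}$; hence $\Lambda$ has exactly two classes and $\phi((2,0))=(1,1)=\phi((1,1))$. So $\phi$ is not injective, $(3)$ and $(4)$ fail, and $(5)$ fails for $x=(2,0)$, $y=(1,1)$: indeed $x\vee y=(1,0)$ has $\phi((1,0))=(1,0)\neq(1,1)=\min$. Note that $(1)$, $(2)$ and your valuation identity all hold in this example, which isolates the gap precisely: without $(c_{3})$ and $(c_{4})$ the disjointness of colours on the two descents from a join, and with it everything from injectivity onward, is simply false, so any correct proof of $(3)$--$(5)$ must use these hypotheses as the paper does.
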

\begin{proof}
Let  $L(x):=\{y\in L_{0}\mid y\geq x\}$. We note that $L(x)$ is a distributive lattice and its Hasse-quiver $\overrightarrow{L}(x)$ is a full sub-quiver of $\overrightarrow{L}$. We claim that $L(x)$ is finite. Indeed the condition\;$(\mathrm{c}_{1})$ implies there exists $r\geq 0$ such that $x\in \tau^{-r}P$ and then the condition $(c_{2})$ implies $L(x)\subset \coprod_{i=0}^{r}\tau^{-i}P$. Therefore the condition $(c_{0})$ implies $L(x)$ is finite.  In particular $L(x)$ is a finite distributive lattice for any $x\in L$.
 \\
(1) Let $x,y\in L$ with $x>y$. Since $\overrightarrow{L}(y)$ is a finite full sub-quiver of $\overrightarrow{L}$ and $x\in L(y)$, there is a path from $x$ to $y$ in $\overrightarrow{L}$.\\     
(2) Let $w:x\stackrel{\alpha_{1}}{\rightarrow}x_{1}\stackrel{\alpha_{2}}{\rightarrow}\cdots \stackrel{\alpha_{r}}{\rightarrow}x_{r}=y$ and $w^{'}:x\stackrel{\beta_{1}}{\rightarrow}x^{'}_{1}\stackrel{\beta_{2}}{\rightarrow}\cdots \stackrel{\beta_{r}}{\rightarrow}x^{'}_{r}=y$.
We prove with the using of an induction on $l(w)=l(w^{'})=r$.

$(r=1)$ In this case the assertion is obvious.

$(r>1)$ Without loss of generality, we can assume  $x_{1}\neq x^{'}_{1}$. Put $s=\mathrm{min}\{i\mid x_{i+1}\leq x^{'}_{1}\}$ then $x_{s}\wedge x^{'}_{1}=x_{s+1}$. We put \[x^{''}_{i}:=\left\{\begin{array}{ll}
x_{i-1}\wedge x^{'}_{1} & \mathrm{if}\ i\leq s  \\ 
x_{i} & \mathrm{if}\ i\geq s+1
\end{array}\right.\]
Then we get the following diagram,
\[%WinTpicVersion3.08
\unitlength 0.1in
\begin{picture}( 52.0000,  8.7000)(  3.7000, -9.8000)
% STR 2 0 3 0
% 3 740 290 740 390 2 0
% $x$
\put(7.4000,-3.9000){\makebox(0,0)[lb]{$x$}}%
% STR 2 0 3 0
% 3 1280 300 1280 400 2 0
% $x_{1}$
\put(12.8000,-4.0000){\makebox(0,0)[lb]{$x_{1}$}}%
% STR 2 0 3 0
% 3 1860 300 1860 400 2 0
% $x_{2}$
\put(18.6000,-4.0000){\makebox(0,0)[lb]{$x_{2}$}}%
% VECTOR 2 0 3 0
% 2 900 340 1200 340
% 
\special{pn 8}%
\special{pa 900 340}%
\special{pa 1200 340}%
\special{fp}%
\special{sh 1}%
\special{pa 1200 340}%
\special{pa 1134 320}%
\special{pa 1148 340}%
\special{pa 1134 360}%
\special{pa 1200 340}%
\special{fp}%
% VECTOR 2 0 3 0
% 2 1520 340 1820 340
% 
\special{pn 8}%
\special{pa 1520 340}%
\special{pa 1820 340}%
\special{fp}%
\special{sh 1}%
\special{pa 1820 340}%
\special{pa 1754 320}%
\special{pa 1768 340}%
\special{pa 1754 360}%
\special{pa 1820 340}%
\special{fp}%
% VECTOR 2 0 3 0
% 2 2040 340 2340 340
% 
\special{pn 8}%
\special{pa 2040 340}%
\special{pa 2340 340}%
\special{fp}%
\special{sh 1}%
\special{pa 2340 340}%
\special{pa 2274 320}%
\special{pa 2288 340}%
\special{pa 2274 360}%
\special{pa 2340 340}%
\special{fp}%
% DOT 2 0 3 0
% 3 2470 340 2540 340 2610 340
% 
\special{pn 8}%
\special{sh 1}%
\special{ar 2470 340 10 10 0  6.28318530717959E+0000}%
\special{sh 1}%
\special{ar 2540 340 10 10 0  6.28318530717959E+0000}%
\special{sh 1}%
\special{ar 2610 340 10 10 0  6.28318530717959E+0000}%
% STR 2 0 3 0
% 3 2990 290 2990 390 2 0
% $x_{s-1}$
\put(29.9000,-3.9000){\makebox(0,0)[lb]{$x_{s-1}$}}%
% STR 2 0 3 0
% 3 3740 300 3740 400 2 0
% $x_{s}$
\put(37.4000,-4.0000){\makebox(0,0)[lb]{$x_{s}$}}%
% VECTOR 2 0 3 0
% 2 3350 340 3650 340
% 
\special{pn 8}%
\special{pa 3350 340}%
\special{pa 3650 340}%
\special{fp}%
\special{sh 1}%
\special{pa 3650 340}%
\special{pa 3584 320}%
\special{pa 3598 340}%
\special{pa 3584 360}%
\special{pa 3650 340}%
\special{fp}%
% VECTOR 2 0 3 0
% 2 2690 340 2690 340
% 
\special{pn 8}%
\special{pa 2690 340}%
\special{pa 2690 340}%
\special{fp}%
% VECTOR 2 0 3 0
% 2 2670 340 2970 340
% 
\special{pn 8}%
\special{pa 2670 340}%
\special{pa 2970 340}%
\special{fp}%
\special{sh 1}%
\special{pa 2970 340}%
\special{pa 2904 320}%
\special{pa 2918 340}%
\special{pa 2904 360}%
\special{pa 2970 340}%
\special{fp}%
% STR 2 0 3 0
% 3 740 890 740 990 2 0
% $x^{''}_{1}$
\put(7.4000,-9.9000){\makebox(0,0)[lb]{$x^{''}_{1}$}}%
% STR 2 0 3 0
% 3 1280 900 1280 1000 2 0
% $x^{''}_{2}$
\put(12.8000,-10.0000){\makebox(0,0)[lb]{$x^{''}_{2}$}}%
% STR 2 0 3 0
% 3 1860 900 1860 1000 2 0
% $x^{''}_{3}$
\put(18.6000,-10.0000){\makebox(0,0)[lb]{$x^{''}_{3}$}}%
% VECTOR 2 0 3 0
% 2 900 940 1200 940
% 
\special{pn 8}%
\special{pa 900 940}%
\special{pa 1200 940}%
\special{fp}%
\special{sh 1}%
\special{pa 1200 940}%
\special{pa 1134 920}%
\special{pa 1148 940}%
\special{pa 1134 960}%
\special{pa 1200 940}%
\special{fp}%
% VECTOR 2 0 3 0
% 2 1520 940 1820 940
% 
\special{pn 8}%
\special{pa 1520 940}%
\special{pa 1820 940}%
\special{fp}%
\special{sh 1}%
\special{pa 1820 940}%
\special{pa 1754 920}%
\special{pa 1768 940}%
\special{pa 1754 960}%
\special{pa 1820 940}%
\special{fp}%
% VECTOR 2 0 3 0
% 2 2040 940 2340 940
% 
\special{pn 8}%
\special{pa 2040 940}%
\special{pa 2340 940}%
\special{fp}%
\special{sh 1}%
\special{pa 2340 940}%
\special{pa 2274 920}%
\special{pa 2288 940}%
\special{pa 2274 960}%
\special{pa 2340 940}%
\special{fp}%
% DOT 2 0 3 0
% 3 2470 940 2540 940 2610 940
% 
\special{pn 8}%
\special{sh 1}%
\special{ar 2470 940 10 10 0  6.28318530717959E+0000}%
\special{sh 1}%
\special{ar 2540 940 10 10 0  6.28318530717959E+0000}%
\special{sh 1}%
\special{ar 2610 940 10 10 0  6.28318530717959E+0000}%
% STR 2 0 3 0
% 3 3070 900 3070 1000 2 0
% $x^{''}_{s}$
\put(30.7000,-10.0000){\makebox(0,0)[lb]{$x^{''}_{s}$}}%
% STR 2 0 3 0
% 3 3760 910 3760 1010 2 0
% $x^{''}_{s+1}$
\put(37.6000,-10.1000){\makebox(0,0)[lb]{$x^{''}_{s+1}$}}%
% STR 2 0 3 0
% 3 4430 890 4430 990 2 0
% $x^{''}_{s+2}$
\put(44.3000,-9.9000){\makebox(0,0)[lb]{$x^{''}_{s+2}$}}%
% VECTOR 2 0 3 0
% 2 3380 930 3680 930
% 
\special{pn 8}%
\special{pa 3380 930}%
\special{pa 3680 930}%
\special{fp}%
\special{sh 1}%
\special{pa 3680 930}%
\special{pa 3614 910}%
\special{pa 3628 930}%
\special{pa 3614 950}%
\special{pa 3680 930}%
\special{fp}%
% VECTOR 2 0 3 0
% 2 4030 930 4330 930
% 
\special{pn 8}%
\special{pa 4030 930}%
\special{pa 4330 930}%
\special{fp}%
\special{sh 1}%
\special{pa 4330 930}%
\special{pa 4264 910}%
\special{pa 4278 930}%
\special{pa 4264 950}%
\special{pa 4330 930}%
\special{fp}%
% VECTOR 2 0 3 0
% 2 4750 940 5050 940
% 
\special{pn 8}%
\special{pa 4750 940}%
\special{pa 5050 940}%
\special{fp}%
\special{sh 1}%
\special{pa 5050 940}%
\special{pa 4984 920}%
\special{pa 4998 940}%
\special{pa 4984 960}%
\special{pa 5050 940}%
\special{fp}%
% DOT 2 0 3 0
% 3 5130 930 5200 930 5270 930
% 
\special{pn 8}%
\special{sh 1}%
\special{ar 5130 930 10 10 0  6.28318530717959E+0000}%
\special{sh 1}%
\special{ar 5200 930 10 10 0  6.28318530717959E+0000}%
\special{sh 1}%
\special{ar 5270 930 10 10 0  6.28318530717959E+0000}%
% VECTOR 2 0 3 0
% 2 2690 940 2690 940
% 
\special{pn 8}%
\special{pa 2690 940}%
\special{pa 2690 940}%
\special{fp}%
% VECTOR 2 0 3 0
% 2 2670 940 2970 940
% 
\special{pn 8}%
\special{pa 2670 940}%
\special{pa 2970 940}%
\special{fp}%
\special{sh 1}%
\special{pa 2970 940}%
\special{pa 2904 920}%
\special{pa 2918 940}%
\special{pa 2904 960}%
\special{pa 2970 940}%
\special{fp}%
% VECTOR 2 0 3 0
% 2 5340 930 5540 930
% 
\special{pn 8}%
\special{pa 5340 930}%
\special{pa 5540 930}%
\special{fp}%
\special{sh 1}%
\special{pa 5540 930}%
\special{pa 5474 910}%
\special{pa 5488 930}%
\special{pa 5474 950}%
\special{pa 5540 930}%
\special{fp}%
% STR 2 0 3 0
% 3 5570 880 5570 980 2 0
% $x^{''}_{r}=y$
\put(55.7000,-9.8000){\makebox(0,0)[lb]{$x^{''}_{r}=y$}}%
% VECTOR 2 0 3 0
% 2 800 420 800 830
% 
\special{pn 8}%
\special{pa 800 420}%
\special{pa 800 830}%
\special{fp}%
\special{sh 1}%
\special{pa 800 830}%
\special{pa 820 764}%
\special{pa 800 778}%
\special{pa 780 764}%
\special{pa 800 830}%
\special{fp}%
% VECTOR 2 0 3 0
% 2 1310 420 1310 830
% 
\special{pn 8}%
\special{pa 1310 420}%
\special{pa 1310 830}%
\special{fp}%
\special{sh 1}%
\special{pa 1310 830}%
\special{pa 1330 764}%
\special{pa 1310 778}%
\special{pa 1290 764}%
\special{pa 1310 830}%
\special{fp}%
% VECTOR 2 0 3 0
% 2 1900 420 1900 830
% 
\special{pn 8}%
\special{pa 1900 420}%
\special{pa 1900 830}%
\special{fp}%
\special{sh 1}%
\special{pa 1900 830}%
\special{pa 1920 764}%
\special{pa 1900 778}%
\special{pa 1880 764}%
\special{pa 1900 830}%
\special{fp}%
% VECTOR 2 0 3 0
% 2 3100 430 3100 840
% 
\special{pn 8}%
\special{pa 3100 430}%
\special{pa 3100 840}%
\special{fp}%
\special{sh 1}%
\special{pa 3100 840}%
\special{pa 3120 774}%
\special{pa 3100 788}%
\special{pa 3080 774}%
\special{pa 3100 840}%
\special{fp}%
% VECTOR 2 0 3 0
% 2 3790 420 3790 830
% 
\special{pn 8}%
\special{pa 3790 420}%
\special{pa 3790 830}%
\special{fp}%
\special{sh 1}%
\special{pa 3790 830}%
\special{pa 3810 764}%
\special{pa 3790 778}%
\special{pa 3770 764}%
\special{pa 3790 830}%
\special{fp}%
% STR 2 0 3 0
% 3 370 880 370 980 2 0
% $x^{'}_{1}=$
\put(3.7000,-9.8000){\makebox(0,0)[lb]{$x^{'}_{1}=$}}%
% STR 2 0 3 0
% 3 970 200 970 300 2 0
% $\alpha_{1}$
\put(9.7000,-3.0000){\makebox(0,0)[lb]{$\alpha_{1}$}}%
% STR 2 0 3 0
% 3 1600 190 1600 290 2 0
% $\alpha_{2}$
\put(16.0000,-2.9000){\makebox(0,0)[lb]{$\alpha_{2}$}}%
% STR 2 0 3 0
% 3 3440 180 3440 280 2 0
% $\alpha_{s}$
\put(34.4000,-2.8000){\makebox(0,0)[lb]{$\alpha_{s}$}}%
% STR 2 0 3 0
% 3 560 540 560 640 2 0
% $\gamma_{1}$
\put(5.6000,-6.4000){\makebox(0,0)[lb]{$\gamma_{1}$}}%
% STR 2 0 3 0
% 3 970 1030 970 1130 2 0
% $\gamma_{2}$
\put(9.7000,-11.3000){\makebox(0,0)[lb]{$\gamma_{2}$}}%
% STR 2 0 3 0
% 3 1590 1050 1590 1150 2 0
% $\gamma_{3}$
\put(15.9000,-11.5000){\makebox(0,0)[lb]{$\gamma_{3}$}}%
% STR 2 0 3 0
% 3 3370 1030 3370 1130 2 0
% $\gamma_{s+1}$
\put(33.7000,-11.3000){\makebox(0,0)[lb]{$\gamma_{s+1}$}}%
% STR 2 0 3 0
% 3 3880 580 3880 680 2 0
% $\alpha_{s+1}$
\put(38.8000,-6.8000){\makebox(0,0)[lb]{$\alpha_{s+1}$}}%
\end{picture}%\]\\\\
in $\overrightarrow{L}$.
We consider a path $w^{''}:x\stackrel{\gamma_{1}}{\rightarrow}x^{'}_{1}= x^{''}_{1}\stackrel{\gamma_{2}}{\rightarrow} \cdots \stackrel{\gamma_{r}}{\rightarrow} x^{''}_{r}=y$. By hypothesis of induction we get $\phi(w^{'},\lambda)=\phi(w^{''},\lambda)$ for any $\lambda\in \Lambda$.
Therefore it is sufficient to show $\phi(w,\lambda)=\phi(w^{''},\lambda)$.
By the definition of $\sim$, we get \[\alpha_{i}\sim\left\{\begin{array}{ll}
\gamma_{i+1} & \mathrm{if}\ i\leq s \\ 
\gamma_{1} & \mathrm{if}\ i=s+1 \\
\gamma_{i} & \mathrm{if}\ i\geq s+2.
\end{array}\right. \]
Therefore we obtain $\phi(w,\lambda)=\phi(w^{''},\lambda)$ for any $\lambda\in \Lambda$. \\
(3) First we show that $\phi$ is injective. Let $w:x\stackrel{\alpha_{1}}{\rightarrow}x_{1}\stackrel{\alpha_{2}}{\rightarrow}\cdots\stackrel{\alpha_{r}}{\rightarrow}x_{r}$,
$w^{'}:x\stackrel{\beta_{1}}{\rightarrow}x^{'}_{1}\stackrel{\beta_{2}}{\rightarrow}\cdots\stackrel{\beta_{r}}{\rightarrow}x^{'}_{r}$ are paths in $\overrightarrow{L}$. We assume $(\phi(w,\lambda))_{\lambda\in \Lambda}=(\phi(w^{'},\lambda))_{\lambda\in \Lambda}$. Then we show 
$x_{r}=x^{'}_{r}$ with the using of an induction on $r$.

$(r=1)$\; We note that the condition $(c_{3})$ implies $x_{1}\wedge x^{'}_{1}\geq \tau^{-1} x$. Indeed, since $L(x_{1}\wedge x^{'}_{1})$ is a finite distributive lattice and there are arrows $x\rightarrow x_{1}$,\;$x\rightarrow x_{1}^{'}$ in $\overrightarrow{L}(x_{1}\wedge x^{'}_{1})$, we obtain either
$x_{1}=x_{1}\wedge x^{'}_{1}=x^{'}_{1}$ or there are arrows $x_{1}\rightarrow x_{1}\wedge x^{'}_{1},$\;$x^{'}_{1}\rightarrow x_{1}\wedge x^{'}_{1}$ in $\overrightarrow{L}(x_{1}\wedge x^{'}_{1})$\;(i.e.\;in $\overrightarrow{L}$). Therefore if
$x_{1}\neq x^{'}_{1}$, then $\phi(p,\alpha_{1}/\!\raisebox{-2pt}{$\sim$})\geq 2$ for any path $p$
from $x$ to $\tau^{-1}x$. This contradict to the condition $(c_{4})$.

$(r>1)$\;Without loss of generality, we can assume $\alpha_{1}\not\sim \beta_{1}$. Let $s:=\mathrm{min}\{i\mid \alpha_{s}\sim \beta_{1}\}$. Then there is an arrow $x_{i}\stackrel{\alpha^{'}_{i+1}}{\rightarrow} x_{i}\wedge x^{'}_{1}$ for any $i<s$. We note that $\alpha^{'}_{s}\sim \alpha^{'}_{s-1}\sim\cdots \sim\alpha^{'}_{2}\sim \beta_{1}\sim \alpha_{s}$.
Therefore we get $x_{s-1}\wedge x^{'}_{1}=x_{s}$. Now we take a path
\[w^{''}:x\stackrel{\beta_{1}}{\rightarrow} x^{'}_{1}\stackrel{\gamma_{2}}{\rightarrow} x_{1}\wedge x^{'}_{1}\stackrel{\gamma_{3}}{\rightarrow} x_{2}\wedge x^{'}_{1}\stackrel{\gamma_{4}}{\rightarrow}\cdots
\stackrel{\gamma_{s}}{\rightarrow} x_{s}\stackrel{\alpha_{s+1}}{\rightarrow} x_{s+1}\stackrel{\alpha_{s+2}}{\rightarrow}\cdots
\stackrel{\alpha_{r}}{\rightarrow} x_{r},  \]
with $\gamma_{i}\sim \alpha_{i-1}\;(2\leq i<s)$. Since $(\phi(w^{''},\lambda))_{\lambda\in \Lambda}=(\phi(w,\lambda))_{\lambda\in \Lambda}=
(\phi(w^{'},\lambda))_{\lambda\in \Lambda}$, we obtain $x_{r}=x^{'}_{r}$\;(we use a hypothesis of an induction).    
 
 By applying $(2)$ of this Proposition, we obtain that $\phi$ is injective. Now the assertion follows from the definition of $\phi$.\\
(4)\;We only show that $\phi(x)>^{\mathrm{op}} \phi(y)$ implies $x> y$. Let $x,y\in L$ with $\phi(x)>^{\mathrm{op}} \phi(y)$. Suppose $x\not >y$ then there are two paths $x\vee y\stackrel{\alpha_{1}}{\rightarrow}x_{1}\stackrel{\alpha_{2}}{\rightarrow}\cdots \stackrel{\alpha_{s}}{\rightarrow}x_{s}=x$ and
$x\vee y\stackrel{\beta_{1}}{\rightarrow}y_{1}\stackrel{\beta_{2}}{\rightarrow}\cdots \stackrel{\beta_{t}}{\rightarrow}y_{t}=y$. Then $\phi(x)>^{\mathrm{op}}\phi(y)$ implies $I:=\{i\mid \beta_{i}\sim \alpha_{1} \}\neq \emptyset$. We put $i:=\mathrm{min}\;I$. Then
we obtain  following diagram,
\[%WinTpicVersion3.08
\unitlength 0.1in
\begin{picture}( 32.4000,  8.2500)(  3.7000, -9.4500)
% STR 2 0 3 0
% 3 590 300 590 400 2 0
% $x\vee y$
\put(5.9000,-4.0000){\makebox(0,0)[lb]{$x\vee y$}}%
% STR 2 0 3 0
% 3 1280 300 1280 400 2 0
% $y_{1}$
\put(12.8000,-4.0000){\makebox(0,0)[lb]{$y_{1}$}}%
% STR 2 0 3 0
% 3 1860 300 1860 400 2 0
% $y_{2}$
\put(18.6000,-4.0000){\makebox(0,0)[lb]{$y_{2}$}}%
% VECTOR 2 0 3 0
% 2 900 340 1200 340
% 
\special{pn 8}%
\special{pa 900 340}%
\special{pa 1200 340}%
\special{fp}%
\special{sh 1}%
\special{pa 1200 340}%
\special{pa 1134 320}%
\special{pa 1148 340}%
\special{pa 1134 360}%
\special{pa 1200 340}%
\special{fp}%
% VECTOR 2 0 3 0
% 2 1520 340 1820 340
% 
\special{pn 8}%
\special{pa 1520 340}%
\special{pa 1820 340}%
\special{fp}%
\special{sh 1}%
\special{pa 1820 340}%
\special{pa 1754 320}%
\special{pa 1768 340}%
\special{pa 1754 360}%
\special{pa 1820 340}%
\special{fp}%
% VECTOR 2 0 3 0
% 2 2040 340 2340 340
% 
\special{pn 8}%
\special{pa 2040 340}%
\special{pa 2340 340}%
\special{fp}%
\special{sh 1}%
\special{pa 2340 340}%
\special{pa 2274 320}%
\special{pa 2288 340}%
\special{pa 2274 360}%
\special{pa 2340 340}%
\special{fp}%
% DOT 2 0 3 0
% 3 2470 340 2540 340 2610 340
% 
\special{pn 8}%
\special{sh 1}%
\special{ar 2470 340 10 10 0  6.28318530717959E+0000}%
\special{sh 1}%
\special{ar 2540 340 10 10 0  6.28318530717959E+0000}%
\special{sh 1}%
\special{ar 2610 340 10 10 0  6.28318530717959E+0000}%
% STR 2 0 3 0
% 3 2990 290 2990 390 2 0
% $y_{i-1}$
\put(29.9000,-3.9000){\makebox(0,0)[lb]{$y_{i-1}$}}%
% VECTOR 2 0 3 0
% 2 2690 340 2690 340
% 
\special{pn 8}%
\special{pa 2690 340}%
\special{pa 2690 340}%
\special{fp}%
% VECTOR 2 0 3 0
% 2 2670 340 2970 340
% 
\special{pn 8}%
\special{pa 2670 340}%
\special{pa 2970 340}%
\special{fp}%
\special{sh 1}%
\special{pa 2970 340}%
\special{pa 2904 320}%
\special{pa 2918 340}%
\special{pa 2904 360}%
\special{pa 2970 340}%
\special{fp}%
% STR 2 0 3 0
% 3 740 890 740 990 2 0
% $y^{'}_{1}$
\put(7.4000,-9.9000){\makebox(0,0)[lb]{$y^{'}_{1}$}}%
% STR 2 0 3 0
% 3 1280 900 1280 1000 2 0
% $y^{'}_{2}$
\put(12.8000,-10.0000){\makebox(0,0)[lb]{$y^{'}_{2}$}}%
% STR 2 0 3 0
% 3 1860 900 1860 1000 2 0
% $y^{'}_{3}$
\put(18.6000,-10.0000){\makebox(0,0)[lb]{$y^{'}_{3}$}}%
% VECTOR 2 0 3 0
% 2 900 940 1200 940
% 
\special{pn 8}%
\special{pa 900 940}%
\special{pa 1200 940}%
\special{fp}%
\special{sh 1}%
\special{pa 1200 940}%
\special{pa 1134 920}%
\special{pa 1148 940}%
\special{pa 1134 960}%
\special{pa 1200 940}%
\special{fp}%
% VECTOR 2 0 3 0
% 2 1520 940 1820 940
% 
\special{pn 8}%
\special{pa 1520 940}%
\special{pa 1820 940}%
\special{fp}%
\special{sh 1}%
\special{pa 1820 940}%
\special{pa 1754 920}%
\special{pa 1768 940}%
\special{pa 1754 960}%
\special{pa 1820 940}%
\special{fp}%
% VECTOR 2 0 3 0
% 2 2040 940 2340 940
% 
\special{pn 8}%
\special{pa 2040 940}%
\special{pa 2340 940}%
\special{fp}%
\special{sh 1}%
\special{pa 2340 940}%
\special{pa 2274 920}%
\special{pa 2288 940}%
\special{pa 2274 960}%
\special{pa 2340 940}%
\special{fp}%
% DOT 2 0 3 0
% 3 2470 940 2540 940 2610 940
% 
\special{pn 8}%
\special{sh 1}%
\special{ar 2470 940 10 10 0  6.28318530717959E+0000}%
\special{sh 1}%
\special{ar 2540 940 10 10 0  6.28318530717959E+0000}%
\special{sh 1}%
\special{ar 2610 940 10 10 0  6.28318530717959E+0000}%
% STR 2 0 3 0
% 3 3070 900 3070 1000 2 0
% $y^{'}_{i}$
\put(30.7000,-10.0000){\makebox(0,0)[lb]{$y^{'}_{i}$}}%
% VECTOR 2 0 3 0
% 2 2670 940 2970 940
% 
\special{pn 8}%
\special{pa 2670 940}%
\special{pa 2970 940}%
\special{fp}%
\special{sh 1}%
\special{pa 2970 940}%
\special{pa 2904 920}%
\special{pa 2918 940}%
\special{pa 2904 960}%
\special{pa 2970 940}%
\special{fp}%
% VECTOR 2 0 3 0
% 2 800 420 800 830
% 
\special{pn 8}%
\special{pa 800 420}%
\special{pa 800 830}%
\special{fp}%
\special{sh 1}%
\special{pa 800 830}%
\special{pa 820 764}%
\special{pa 800 778}%
\special{pa 780 764}%
\special{pa 800 830}%
\special{fp}%
% VECTOR 2 0 3 0
% 2 1310 420 1310 830
% 
\special{pn 8}%
\special{pa 1310 420}%
\special{pa 1310 830}%
\special{fp}%
\special{sh 1}%
\special{pa 1310 830}%
\special{pa 1330 764}%
\special{pa 1310 778}%
\special{pa 1290 764}%
\special{pa 1310 830}%
\special{fp}%
% VECTOR 2 0 3 0
% 2 1900 420 1900 830
% 
\special{pn 8}%
\special{pa 1900 420}%
\special{pa 1900 830}%
\special{fp}%
\special{sh 1}%
\special{pa 1900 830}%
\special{pa 1920 764}%
\special{pa 1900 778}%
\special{pa 1880 764}%
\special{pa 1900 830}%
\special{fp}%
% VECTOR 2 0 3 0
% 2 3100 430 3100 840
% 
\special{pn 8}%
\special{pa 3100 430}%
\special{pa 3100 840}%
\special{fp}%
\special{sh 1}%
\special{pa 3100 840}%
\special{pa 3120 774}%
\special{pa 3100 788}%
\special{pa 3080 774}%
\special{pa 3100 840}%
\special{fp}%
% STR 2 0 3 0
% 3 370 880 370 980 2 0
% $x_{1}=$
\put(3.7000,-9.8000){\makebox(0,0)[lb]{$x_{1}=$}}%
% STR 2 0 3 0
% 3 970 200 970 300 2 0
% $\beta_{1}$
\put(9.7000,-3.0000){\makebox(0,0)[lb]{$\beta_{1}$}}%
% STR 2 0 3 0
% 3 1600 190 1600 290 2 0
% $\beta_{2}$
\put(16.0000,-2.9000){\makebox(0,0)[lb]{$\beta_{2}$}}%
% STR 2 0 3 0
% 3 560 580 560 680 2 0
% $\alpha_{1}$
\put(5.6000,-6.8000){\makebox(0,0)[lb]{$\alpha_{1}$}}%
% STR 2 0 3 0
% 3 3140 580 3140 680 2 0
% $\gamma$
\put(31.4000,-6.8000){\makebox(0,0)[lb]{$\gamma$}}%
% STR 2 0 3 0
% 3 3610 610 3610 710 2 0
% $y^{'}_{l}:=y_{l-1}\wedge x_{1}\;(l=1,2\cdots i)$
\put(36.1000,-7.1000){\makebox(0,0)[lb]{$y^{'}_{l}:=y_{l-1}\wedge x_{1}\;(l=1,2\cdots i)$}}%
\end{picture}%\]
in $\overrightarrow{L}$.
 Since $\gamma\sim \alpha$ we get $x_{1}\wedge y_{i-1}=y_{i}$. Therefore we obtain  $x\vee y> x_{1}\geq y_{i}\geq y$ and $x_{1}\geq x$. We now get a contradiction. \\ 
(5)\; Since $x\vee y \geq x,y$ there are paths,
$x\vee y\stackrel{\alpha_{1}}{\rightarrow}x_{1}\stackrel{\alpha_{2}}{\rightarrow}\cdots \stackrel{\alpha_{s}}{\rightarrow}x_{s}=x$ and $x\vee y\stackrel{\beta_{1}}{\rightarrow}y_{1}\stackrel{\beta_{2}}{\rightarrow}\cdots \stackrel{\beta_{t}}{\rightarrow}y_{t}=y$.

Suppose $I:=\{i\mid \alpha_{i}\sim \beta_{k}\ \mathrm{for\ some}\ k \}\neq \emptyset$. Let $i:=\mathrm{min}\;I$,  $j:=\mathrm{min}\{k\mid \alpha_{i}\sim \beta_{k}\}$ and $\lambda:=\alpha_{i}/\!\raisebox{-2pt}{$\sim$}$. 

 We claim $x_{i}\wedge y_{j-1}= x_{i-1}\wedge y_{j}$.  Since $x_{i-1}\wedge y_{j}\leq x_{i-1}$, $x_{i-1}\wedge y_{j}\leq y_{j}$, we obtain 
\[\phi(x_{i-1}\wedge y_{j})_{\lambda^{'}}\geq \left\{\begin{array}{ll}
\phi(x_{i-1})_{\lambda}+1 & \mathrm{if\ }\lambda^{'}=\lambda \\ 
\phi(x_{i-1})_{\lambda^{'}} & \mathrm{if\ }\lambda^{'}\neq\lambda.
\end{array}\right. \] Above inequalities imply $\phi(x_{i-1}\wedge y_{j})\leq^{\mathrm{op}} \phi(x_{i})$. Therefore we obtain $x_{i-1}\wedge y_{j}\leq x_{i}$. In particular we get $x_{i-1}\wedge y_{j}\leq x_{i}\wedge y_{j-1}$. Similarly we obtain $x_{i}\wedge y_{j-1}\leq x_{i-1}\wedge y_{j}$.    

Since 
\[\begin{array}{lll}
y_{j-1} & = & (x\vee y)\wedge y_{j-1} \\ 
 & = & (x_{i}\vee y_{j})\wedge y_{j-1} \\ 
 & = & (x_{i}\wedge y_{j-1})\vee (y_{j}\wedge y_{j-1}) \\ 
 & = & (x_{i-1}\wedge y_{j})\vee y_{j} \\ 
 & = & y_{j}, \\ 
\end{array} \]
  we get a contradiction. In particular $I=\emptyset$. 
  
Since $\phi(x\vee y)\geq^{\mathrm{op}} (\mathrm{min}\{\phi(x)_{\lambda},\phi(y)_{\lambda}\})_{\lambda}$, it is sufficient to show that $\phi(x\vee y)\not >^{\mathrm{op}} (\mathrm{min}\{\phi(x)_{\lambda},\phi(y)_{\lambda}\})_{\lambda}$. If $\phi(x\vee y) >^{\mathrm{op}} (\mathrm{min}\{\phi(x)_{\lambda},\phi(y)_{\lambda}\})_{\lambda}$, then there exists $\lambda\in \Lambda$ such that $\phi(x\vee y)_{\lambda}<\mathrm{min}\{\phi(x)_{\lambda},\phi(y)_{\lambda}\}$.
This implies there exists $(i,j)$ such that $\alpha_{i}/\!\raisebox{-2pt}{$\sim$}=\lambda=\beta_{j}/\!\raisebox{-2pt}{$\sim$}$. In particular $I\neq \emptyset$. We obtain a contradiction.

\end{proof}

\begin{cor}
\label{c2}
Assume that $(L,\tau^{-1})$ satisfies the conditions $(c_{0})\sim(c_{4})$. Then a map $\phi$ defined in Proposition\;\ref{p1} induces a lattice inclusion \[L\rightarrow (\mathbb{Z}^{\Lambda},\leq^{\mathrm{op}}).\] 
\end{cor}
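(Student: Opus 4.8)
The plan is to obtain Corollary~\ref{c2} directly from the five assertions of Proposition~\ref{p1}, which already isolate every property demanded by the definition of a lattice inclusion: $\phi$ must be (i)~order-preserving, (ii)~a poset inclusion, in the sense that $\phi(x)\leq^{\mathrm{op}}\phi(y)$ forces $x\leq y$, and (iii)~compatible with $\vee$ and $\wedge$.

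First I would record the lattice structure of the target $(\mathbb{Z}^{\Lambda},\leq^{\mathrm{op}})$. By the same elementary observation used in the proof of Theorem~\ref{t1}, for $a,b\in\mathbb{Z}^{\Lambda}$ one has $a\vee b=(\min\{a_{\lambda},b_{\lambda}\})_{\lambda\in\Lambda}$ and $a\wedge b=(\max\{a_{\lambda},b_{\lambda}\})_{\lambda\in\Lambda}$, since $\leq^{\mathrm{op}}$ is the coordinate-wise reversed order and joins and meets in a product of chains are computed coordinate-wise. I would also note that by Proposition~\ref{p1}(3) the image of $\phi$ lies in $\mathbb{Z}_{\geq 0}^{\Lambda}$, and that this sub-poset is closed under the coordinate-wise minima and maxima above, so that $\vee$ and $\wedge$ may be computed indifferently in $\mathbb{Z}_{\geq 0}^{\Lambda}$ or in $\mathbb{Z}^{\Lambda}$.

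With this in hand the three requirements are immediate. Order-preservation follows from Proposition~\ref{p1}(1) and~(3): if $x\geq y$ there is a path from $x$ to $y$ in $\overrightarrow{L}$, which $\phi$ carries to a path in the Hasse-quiver of the target, whence $\phi(x)\leq^{\mathrm{op}}\phi(y)$. The poset-inclusion property is the converse direction supplied by Proposition~\ref{p1}(4), the non-strict version being recovered from the strict one together with the injectivity of $\phi$ established in Proposition~\ref{p1}(3). Finally, the lattice-homomorphism property is read off from Proposition~\ref{p1}(5): since $\phi(x\vee y)_{\lambda}=\min\{\phi(x)_{\lambda},\phi(y)_{\lambda}\}$ and $\phi(x\wedge y)_{\lambda}=\max\{\phi(x)_{\lambda},\phi(y)_{\lambda}\}$ for every $\lambda\in\Lambda$, the coordinate-wise description of the target gives $\phi(x\vee y)=\phi(x)\vee\phi(y)$ and $\phi(x\wedge y)=\phi(x)\wedge\phi(y)$.

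Since every ingredient has already been established in Proposition~\ref{p1}, I anticipate no substantive obstacle. The only point needing care is the bookkeeping of the reversed order: one must check that the join in $\leq^{\mathrm{op}}$ really corresponds to the coordinate-wise minimum and the meet to the coordinate-wise maximum, so that assertion~(5) is matched to the correct operations and the inequalities in assertion~(4) are not accidentally reversed.
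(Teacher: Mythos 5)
Your proposal is correct and matches the paper's treatment: the paper states Corollary\;\ref{c2} with no separate proof, precisely because it is the immediate assembly of parts $(1)$, $(3)$, $(4)$, $(5)$ of Proposition\;\ref{p1} together with the coordinate-wise description of $\vee$ and $\wedge$ in $(\mathbb{Z}^{\Lambda},\leq^{\mathrm{op}})$ that you spell out, and your use of injectivity from $(3)$ to pass between the strict statement actually proved in $(4)$ and the non-strict poset-inclusion property is the right way to handle that point. One correction: in your order-preservation step the conclusion is inverted --- from $x\geq y$, a path from $x$ to $y$ in $\overrightarrow{L}$ is carried by $\phi$ to a path from $\phi(x)$ to $\phi(y)$ in the Hasse-quiver of $(\mathbb{Z}_{\geq 0}^{\Lambda},\leq^{\mathrm{op}})$, which yields $\phi(x)\geq^{\mathrm{op}}\phi(y)$ (equivalently $\phi(x)_{\lambda}\leq\phi(y)_{\lambda}$ for all $\lambda$, since counts of arrows only accumulate along descending paths from $o$), not $\phi(x)\leq^{\mathrm{op}}\phi(y)$; this is exactly the reversal you warn about in your closing paragraph, and with it fixed the argument is complete.
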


\begin{lem}
\label{l0}
Assume that $Q$ satisfies the condition $(\mathrm{C})$. Then 
$(\mathcal{T}_{\mathrm{p}}(Q),\tau_{Q}^{-1})$ satisfies the conditions $(c_{0})\sim (c_{4})$. Moreover we get $\Lambda=Q_{0}$ and $\phi(T)=(r_{x})_{x\in Q_{0}}$ for a basic pre-projective tilting module $T\simeq \oplus_{x\in Q_{0}}\tau^{-r_{x}}P(x)$.
\end{lem}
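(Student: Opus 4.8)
The plan is to use Theorem~\ref{t0} to transport the whole problem into $(\mathbb{Z}_{\geq 0}^{Q_0},\leq^{\mathrm{op}})$ and then read off each condition as an elementary statement about integer vectors. Write $L=\mathcal{T}_{\mathrm{p}}(Q)$ and identify $T\simeq\bigoplus_x\tau^{-r_x}P(x)$ with its coordinate vector $(T_x)_x=(r_x)_x$; by Theorem~\ref{t0} this is an order- and quiver-embedding into $(\mathbb{Z}_{\geq 0}^{Q_0},\leq^{\mathrm{op}})$, under which $\tau^{-1}_Q$ becomes the translation $v\mapsto v+\mathbf 1$ (with $\mathbf 1=(1,\dots,1)$) and the maximum element $o=kQ$ becomes $\mathbf 0$. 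Since translation by $\mathbf 1$ commutes with coordinatewise $\min$ and $\max$, $\tau^{-1}_Q$ is a lattice inclusion inducing a quiver inclusion, so $(\mathcal{T}_{\mathrm{p}}(Q),\tau^{-1}_Q)$ is a legitimate instance of the abstract setup. First I would record, using minimality of $T(a)$ from the Remark after Theorem~\ref{t0} together with $\Ext^1_{kQ}(\tau^{-1}M,\tau^{-1}N)\simeq\Ext^1_{kQ}(M,N)$ from Proposition~\ref{arthm}, that $T'=\tau^{-1}_QT$ is tilting if and only if every coordinate of $T'$ is positive; hence $P=L\setminus\tau^{-1}_QL=\{T\mid \min_x T_x=0\}$.

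With this dictionary the five conditions are immediate. For $(\mathrm c_0)$: $kQ\in P$ gives $P\neq\emptyset$, and if $T_a=0$ then $T\geq T(a)$ forces $T_x\leq l_Q(a,x)<\infty$ for all $x$, bounding the coordinates of every element of $P$ and making $P$ finite. For $(\mathrm c_1)$: $T\in\tau^{-r}P$ holds exactly when $\min_x T_x=r$, so the union is a disjoint decomposition of $L$. For $(\mathrm c_2)$: $x<\tau^{-r}y$ with $r>0$ and $x,y\in P$ would force $x_i\geq y_i+r\geq r>0$ for all $i$, contradicting $\min_i x_i=0$. For $(\mathrm c_3)$: because the embedding is a quiver inclusion, a Hasse arrow $x\to y$ maps to a covering relation of $(\mathbb{Z}_{\geq 0}^{Q_0},\leq^{\mathrm{op}})$, i.e.\ $y=x+e_a$ for a single $a\in Q_0$; then $y_i\leq x_i+1=(\tau^{-1}x)_i$ for all $i$, which is precisely $y\geq\tau^{-1}x$. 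For $(\mathrm c_4)$: the set $L(\tau^{-1}x)=\{T\geq\tau^{-1}x\}$ is a finite distributive lattice, so the interval $[\tau^{-1}x,x]$ admits a saturated chain $x\to x_1\to\cdots\to\tau^{-1}x$; each arrow raises one coordinate by one and the total change is $+\mathbf 1$, so the chain increments each coordinate exactly once. Since the generating relations of $\sim$ preserve the incremented coordinate (for (a) because $\tau^{-1}$ shifts all coordinates uniformly, for (b) because opposite arrows of the square $S(\alpha,\beta)$ raise the same coordinate), arrows incrementing distinct coordinates lie in distinct $\sim$-classes; hence the chain has pairwise non-equivalent arrows, which is the content of $(\mathrm c_4)$.

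The remaining and most delicate point is the identification $\Lambda=Q_0$. Sending an arrow $\alpha$ to the unique coordinate $c(\alpha)\in Q_0$ it increments gives a map $c:\overrightarrow{L}_1\to Q_0$ that, by the observation just made, is constant on $\sim$-classes and so descends to $\bar c:\Lambda\to Q_0$. Surjectivity of $\bar c$ follows from the saturated chains of $(\mathrm c_4)$, whose arrows hit every coordinate. The crux is injectivity: any two arrows incrementing the same coordinate $a$ must be $\sim$-equivalent. Here I would argue by a sliding procedure. Whenever both $S\to S+e_a$ and $S\to S+e_b$ are arrows, distributivity of $L$ places $S+e_a+e_b=(S+e_a)\wedge(S+e_b)$ in $L$ and produces a square $S(\alpha,\cdot)$ identifying the $a$-arrow at source $S$ with the $a$-arrow at source $S+e_b$; combining such slides with the translations $\alpha\sim\tau^{-1}\alpha$ and the connectedness of $\overrightarrow{L}$ (Proposition~\ref{p1}(1)) moves the source of any $a$-arrow to that of any other. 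The difference-bounded description of $L$ coming from Proposition~\ref{p} and Corollary~\ref{c1}, namely $L=\{v\geq 0\mid v_i-v_j\leq l_Q(j,i)\ \forall i,j\}$, should guarantee that the sources of the $a$-arrows form a single region connected under these moves. Making this connectedness precise is where the real work lies, and I expect it to be the main obstacle.

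Finally, once $\bar c$ is shown to be a bijection I identify $\Lambda$ with $Q_0$ along it. By Proposition~\ref{p1}(3), $\phi(T)_\lambda$ equals the number of arrows of class $\lambda$ in a path from $o=kQ$ to $T$; along such a path coordinate $a=\bar c(\lambda)$ is incremented exactly $T_a$ times, and since its fibre $\bar c^{-1}(a)$ is a single class this count is $T_a=r_a$. Therefore $\phi(T)=(r_x)_{x\in Q_0}$, which is the asserted formula. Corollary~\ref{c2}, applicable now that $(\mathrm c_0)$--$(\mathrm c_4)$ hold, then yields the lattice inclusion $\phi:L\to(\mathbb{Z}^{\Lambda},\leq^{\mathrm{op}})$, consistent with the lattice inclusion of Theorem~\ref{t0}, completing the proof.
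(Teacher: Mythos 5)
Your coordinate dictionary via Theorem~\ref{t0}, the identification $P=\{T\mid \min_{x}T_{x}=0\}$, and the checks of $(c_{0})$--$(c_{3})$ are all correct, and you have located the crux accurately. But the crux itself is left unproven: the claim that any two arrows $\alpha:T\to T+e_{a}$ and $\beta:T''\to T''+e_{a}$ incrementing the \emph{same} coordinate are $\sim$-equivalent. This is not a minor loose end --- it is exactly the assertion $\Lambda=Q_{0}$ that the lemma makes, and your own text concedes it (``making this connectedness precise is where the real work lies''). The gap also feeds back into your verification of $(c_{4})$: pairwise inequivalence of the arrows along a saturated chain from $x$ to $\tau^{-1}x$ only shows that each class occurs \emph{at most} once; for those arrows to form a system of representatives of $\Lambda$ you need every class to occur, which again requires $\#\Lambda=\#Q_{0}$, i.e.\ the missing injectivity.

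The paper closes this gap with a two-step reduction that your ``sliding'' sketch lacks. First, using $\alpha\sim\tau_{Q}^{-m}\alpha$, translate so that $T_{a}=T''_{a}$. Second --- the organizing idea --- pass through the meet: since $\wedge$ is the coordinatewise maximum, $(T\wedge T'')_{a}=T_{a}=T''_{a}$, and $\gamma:T\wedge T''\to T'\wedge T'''$ is again an arrow incrementing $a$; hence it suffices to treat \emph{comparable} sources, say $T\geq T''$ with $T_{a}=T''_{a}$. In that case choose any Hasse path $T=X^{0}\to X^{1}\to\cdots\to X^{r}=T''$ in $\ptiltq(Q)$. Coordinates only increase along such a path and the $a$-coordinates of its endpoints agree, so no arrow of the path increments $a$; consequently $X^{i}\wedge T'=X^{i}+e_{a}$ for every $i$, and the path together with its meet with $T'$ forms a ladder of squares, each an instance of the generating relation (b) of $\sim$. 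Chaining these squares gives $\alpha\sim\gamma$, and symmetrically $\beta\sim\gamma$, so $\alpha\sim\beta$. This explicit ladder is precisely the connectedness statement your argument needs; without it (or some substitute for it) the proposal does not prove the lemma.
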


\begin{proof}
For any $T\simeq \oplus_{x\in Q_{0}}\tau^{-r_{x}}P(x)$, we set $T_{x}:=r_{x}$. For any arrow $\alpha:T\rightarrow T^{'}$ in $\ptiltq(Q)$ we set $v(\alpha)\in Q_{0}$ such that $T^{'}_{v(\alpha)}=T_{v(\alpha)}+1$. 

First we will show that $\alpha\sim \beta $ if and only if $v(\alpha)=v(\beta)$. Let
$\alpha:T\rightarrow T^{'}$ and $\beta:T^{''}\rightarrow T^{'''}$. We note that $v(\alpha)=v(\beta)$ if either $\beta=\tau_{Q}^{-1}\alpha $ or there is the diagram,
\[%WinTpicVersion3.08
\unitlength 0.1in
\begin{picture}( 13.8500,  9.4000)( 15.1000,-10.1000)
% STR 2 0 3 0
% 3 2280 260 2280 360 2 0
% $T$
\put(22.8000,-3.6000){\makebox(0,0)[lb]{$T$}}%
% VECTOR 2 0 3 0
% 2 2480 300 2750 300
% 
\special{pn 8}%
\special{pa 2480 300}%
\special{pa 2750 300}%
\special{fp}%
\special{sh 1}%
\special{pa 2750 300}%
\special{pa 2684 280}%
\special{pa 2698 300}%
\special{pa 2684 320}%
\special{pa 2750 300}%
\special{fp}%
% STR 2 0 3 0
% 3 2830 260 2830 360 2 0
% $T^{'}$
\put(28.3000,-3.6000){\makebox(0,0)[lb]{$T^{'}$}}%
% STR 2 0 3 0
% 3 2570 140 2570 240 2 0
% $\alpha$
\put(25.7000,-2.4000){\makebox(0,0)[lb]{$\alpha$}}%
% VECTOR 2 0 3 0
% 2 2320 420 2320 790
% 
\special{pn 8}%
\special{pa 2320 420}%
\special{pa 2320 790}%
\special{fp}%
\special{sh 1}%
\special{pa 2320 790}%
\special{pa 2340 724}%
\special{pa 2320 738}%
\special{pa 2300 724}%
\special{pa 2320 790}%
\special{fp}%
% VECTOR 2 0 3 0
% 2 2490 920 2760 920
% 
\special{pn 8}%
\special{pa 2490 920}%
\special{pa 2760 920}%
\special{fp}%
\special{sh 1}%
\special{pa 2760 920}%
\special{pa 2694 900}%
\special{pa 2708 920}%
\special{pa 2694 940}%
\special{pa 2760 920}%
\special{fp}%
% STR 2 0 3 0
% 3 2250 860 2250 960 2 0
% $T^{''}$
\put(22.5000,-9.6000){\makebox(0,0)[lb]{$T^{''}$}}%
% STR 2 0 3 0
% 3 2830 880 2830 980 2 0
% $T^{'''}$
\put(28.3000,-9.8000){\makebox(0,0)[lb]{$T^{'''}$}}%
% STR 2 0 3 0
% 3 2590 1080 2590 1180 2 0
% $\beta$
\put(25.9000,-11.8000){\makebox(0,0)[lb]{$\beta$}}%
% VECTOR 2 0 3 0
% 2 2890 420 2890 790
% 
\special{pn 8}%
\special{pa 2890 420}%
\special{pa 2890 790}%
\special{fp}%
\special{sh 1}%
\special{pa 2890 790}%
\special{pa 2910 724}%
\special{pa 2890 738}%
\special{pa 2870 724}%
\special{pa 2890 790}%
\special{fp}%
% STR 2 0 3 0
% 3 1510 570 1510 670 2 0
% $S(\alpha,\beta)$
\put(15.1000,-6.7000){\makebox(0,0)[lb]{$S(\alpha,\beta)$}}%
\end{picture}%\]\\\\
in $\ptiltq(Q)$. Therefore $\alpha\sim \beta$ implies $v(\alpha)=v(\beta)$. 

We assume $v(\alpha)=v(\beta)=x$. We  show  $\alpha\sim \beta$.    At first we assume $T_{x}=T^{''}_{x}$ and $T\geq T^{''}$. Let $w:T=X^{0}\rightarrow X^{1}\rightarrow\cdots \rightarrow T^{r}=T^{''}$ be a path in $\ptiltq(Q)$. Since $v(X^{i-1}\rightarrow X^{i})\neq x$ for any $i>0$, we obtain a path
$w^{'}:T=Y^{0}\rightarrow Y^{1}\rightarrow Y^{2}\rightarrow \cdots\rightarrow Y^{r}=T^{'''}$, where $Y^{i}:=X^{i-1}\wedge T^{'}\;(1\leq i \leq r)$. Now we note that there is a diagram,
\[%WinTpicVersion3.08
\unitlength 0.1in
\begin{picture}( 18.4000,  9.2000)( 22.5000, -9.9000)
% STR 2 0 3 0
% 3 2260 260 2260 360 2 0
% $X^{i-1}$
\put(22.6000,-3.6000){\makebox(0,0)[lb]{$X^{i-1}$}}%
% VECTOR 2 0 3 0
% 2 2550 310 2820 310
% 
\special{pn 8}%
\special{pa 2550 310}%
\special{pa 2820 310}%
\special{fp}%
\special{sh 1}%
\special{pa 2820 310}%
\special{pa 2754 290}%
\special{pa 2768 310}%
\special{pa 2754 330}%
\special{pa 2820 310}%
\special{fp}%
% STR 2 0 3 0
% 3 2910 270 2910 370 2 0
% $Y^{i-1}$
\put(29.1000,-3.7000){\makebox(0,0)[lb]{$Y^{i-1}$}}%
% STR 2 0 3 0
% 3 2570 140 2570 240 2 0
% $\alpha_{i-1}$
\put(25.7000,-2.4000){\makebox(0,0)[lb]{$\alpha_{i-1}$}}%
% VECTOR 2 0 3 0
% 2 2320 420 2320 790
% 
\special{pn 8}%
\special{pa 2320 420}%
\special{pa 2320 790}%
\special{fp}%
\special{sh 1}%
\special{pa 2320 790}%
\special{pa 2340 724}%
\special{pa 2320 738}%
\special{pa 2300 724}%
\special{pa 2320 790}%
\special{fp}%
% VECTOR 2 0 3 0
% 2 2550 910 2820 910
% 
\special{pn 8}%
\special{pa 2550 910}%
\special{pa 2820 910}%
\special{fp}%
\special{sh 1}%
\special{pa 2820 910}%
\special{pa 2754 890}%
\special{pa 2768 910}%
\special{pa 2754 930}%
\special{pa 2820 910}%
\special{fp}%
% STR 2 0 3 0
% 3 2250 860 2250 960 2 0
% $X_{i}$
\put(22.5000,-9.6000){\makebox(0,0)[lb]{$X_{i}$}}%
% STR 2 0 3 0
% 3 2900 860 2900 960 2 0
% $Y^{i}$
\put(29.0000,-9.6000){\makebox(0,0)[lb]{$Y^{i}$}}%
% STR 2 0 3 0
% 3 2590 1060 2590 1160 2 0
% $\alpha_{i}$
\put(25.9000,-11.6000){\makebox(0,0)[lb]{$\alpha_{i}$}}%
% VECTOR 2 0 3 0
% 2 2970 420 2970 790
% 
\special{pn 8}%
\special{pa 2970 420}%
\special{pa 2970 790}%
\special{fp}%
\special{sh 1}%
\special{pa 2970 790}%
\special{pa 2990 724}%
\special{pa 2970 738}%
\special{pa 2950 724}%
\special{pa 2970 790}%
\special{fp}%
% STR 2 0 3 0
% 3 4090 580 4090 680 2 0
% $(\forall i>0)$
\put(40.9000,-6.8000){\makebox(0,0)[lb]{$(\forall i>0)$}}%
\end{picture}% \]
In particular we obtain $\alpha\sim \beta$. 

Next we show $\alpha\sim \beta$ in arbitrary case. Since $\alpha\sim \tau_{Q}^{-\mathrm{max}\{0,T^{''}_{x}-T_{x}\}}\alpha$ and $\beta\sim \tau _{Q}^{-\mathrm{max}\{0,T_{x}-T^{''}_{x}\}}\beta$,
  we can assume that $T_{x}=T^{''}_{x}$. Then there is an arrow $\gamma:T\wedge T^{''}\rightarrow T^{'}\wedge T^{'''}$ with $v(\gamma)=x$. Since $(T\wedge T^{''})_{x}=T_{x}=T^{'}_{x}$
 and $T\geq T\wedge T^{''}\leq T^{''}$, we obtain $\alpha\sim \gamma \sim \beta$. 
 
 Therefore $v$ induces $Q_{0}\simeq\Lambda$. In particular, we obtain $\phi(T)_{x}=T_{x}$. Now, by applying Theorem\;\ref{t0}, we can easily check that $(\mathcal{T}_{\mathrm{p}}(Q),\tau_{Q}^{-1})$ satisfies the conditions $(c_{0})\sim (c_{4})$.

\end{proof}

From now on we assume that $(L,\tau^{-1})$ satisfies the conditions $(c_{0})\sim (c_{4})$ in Proposition\;\ref{p1}. Put $\Lambda:=\Lambda(L,\tau^{-1})$. Then we can identify $L$ with its Hasse-quiver $\overrightarrow{L}$. Indeed Proposition\;\ref{p1}\;(1) shows that $x>y$ in $L$ if and only if there exists a
path from $x$ to $y$ in $\overrightarrow{L} $. Moreover, by Proposition\;\ref{p1} and Corollary\;\ref{c2}, we can regard $L$ as a sub-lattice of $(\mathbb{Z}^{\Lambda},\leq^{\mathrm{op}})$. Then for $x\in L$ and $\lambda\in \Lambda$ we
denote by $x_{\lambda}$  the $\lambda$-th entry of $x$. We note that $(\tau^{-1}x)_{\lambda}=x_{\lambda}+1$ for any $x\in L$ and $\lambda\in \Lambda$. Now we define  $\tau x:=(x_{\lambda}-1)_{\lambda\in \Lambda}\in L $ for any $x\in \tau^{-1}L$. 

\begin{lem}
\label{l1}
Let $\lambda\in \Lambda$. Then there is the minimum element $x(\lambda)$
of the set $L(\lambda):=\{x\in L_{0}\mid x_{\lambda}=0\}$. Moreover,
 $\lambda\mapsto x(\lambda)$ induces an inclusion $\Lambda\rightarrow P_{0}$.  

\end{lem}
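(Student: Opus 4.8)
The plan is to first locate the set $L(\lambda)$ inside the coordinate embedding $L\hookrightarrow(\Z^{\Lambda},\leq^{\mathrm{op}})$ and extract its minimum by a finiteness argument, and then to read off injectivity from the local behaviour of $\overrightarrow{L}$ at the point $x(\lambda)$. Throughout I use the identifications set up just before the lemma: $o$ is the zero vector, $\wedge$ is coordinatewise maximum, and $(\tau^{-1}y)_{\mu}=y_{\mu}+1$ for all $\mu$, whence $(\tau^{-r}y)_{\mu}=y_{\mu}+r$.

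First I would show $L(\lambda)\subseteq P_{0}$. Writing $x=\tau^{-r}x'$ with $x'\in P$ and $r\geq 0$ as provided by $(\mathrm{c}_{1})$, the coordinate formula gives $x_{\mu}=x'_{\mu}+r$ for every $\mu\in\Lambda$; in particular $x_{\lambda}=x'_{\lambda}+r\geq r$, so $x_{\lambda}=0$ forces $r=0$ and hence $x=x'\in P_{0}$. Since $P$ is finite by $(\mathrm{c}_{0})$ and $L(\lambda)$ contains $o$ (as $o_{\lambda}=0$), the set $L(\lambda)$ is finite and nonempty. It is also closed under $\wedge$, because $(x\wedge y)_{\lambda}=\max\{x_{\lambda},y_{\lambda}\}=0$ whenever $x_{\lambda}=y_{\lambda}=0$. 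Therefore $x(\lambda):=\bigwedge_{x\in L(\lambda)}x$ is a finite meet lying in $L(\lambda)$, and being $\leq$ every element of $L(\lambda)$ it is the asserted minimum; by the first step $x(\lambda)\in P_{0}$.

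For injectivity I would analyse the arrows leaving $z:=x(\lambda)$ in $\overrightarrow{L}$. By the definition of the coordinates (Proposition \ref{p1}(2),(3)), a single arrow $z\to w$ increases exactly one coordinate by one: if $\nu$ denotes its $\sim$-class then $w_{\nu}=z_{\nu}+1$ and $w_{\mu}=z_{\mu}$ for $\mu\neq\nu$. Were $\nu\neq\lambda$, we would get $w_{\lambda}=z_{\lambda}=0$, i.e.\ $w\in L(\lambda)$ with $w<z$, contradicting the minimality of $z$; hence \emph{every} arrow out of $x(\lambda)$ carries the class $\lambda$. Moreover $x(\lambda)$ really does have an outgoing arrow: since $(\tau^{-1}x(\lambda))_{\mu}=x(\lambda)_{\mu}+1$ for all $\mu$ we have $\tau^{-1}x(\lambda)<x(\lambda)$, and Proposition \ref{p1}(1) supplies a path from $x(\lambda)$ to $\tau^{-1}x(\lambda)$ whose first arrow leaves $x(\lambda)$. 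Consequently, if $x(\lambda)=x(\mu)$ then this common element has an out-arrow whose class is simultaneously $\lambda$ and $\mu$, so $\lambda=\mu$. Thus $\lambda\mapsto x(\lambda)$ is the desired injection $\Lambda\to P_{0}$.

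The only step needing real care is this injectivity argument: the existence half is a soft finiteness statement (the containment $L(\lambda)\subseteq P_{0}$ together with $(\mathrm{c}_{0})$), whereas injectivity rests on the observation that minimality of $x(\lambda)$ in $L(\lambda)$ rigidly forces the $\sim$-class of every covering arrow to be $\lambda$, combined with the easy but genuinely necessary verification, via $\tau^{-1}x(\lambda)<x(\lambda)$ and Proposition \ref{p1}(1), that at least one covering arrow exists.
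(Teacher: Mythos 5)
Your proof is correct and follows essentially the same route as the paper's: existence via the observation that $L(\lambda)\subseteq P_{0}$ is finite and closed under $\wedge$, and injectivity via the fact that any covering arrow out of $x(\lambda)$ must have $\sim$-class $\lambda$ by minimality. The only difference is that you explicitly verify (via $\tau^{-1}x(\lambda)<x(\lambda)$ and Proposition~\ref{p1}(1)) that an outgoing arrow exists, a small point the paper leaves implicit.
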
  
\begin{proof}
Since $x_{\lambda}\geq 1$ for any $x\in L_{0}\setminus P_{0}$, we obtain
$\#L(\lambda)<\infty$. Therefore we can take $x(\lambda):=\wedge_{x\in L(\lambda)}x$. 

Now we assume $x(\lambda_{1})=x(\lambda_{2})$. Let $\alpha$ be an arrow
$x(\lambda_{1})\stackrel{\alpha}{\rightarrow} y$ in $L$. Since $y_{\lambda_{1}}\neq 0$ and $y_{\lambda_{2}}\neq 0$, we obtain $\lambda_{1}=\alpha/\!\raisebox{-2pt}{$\sim$}=\lambda_{2}$. 
\end{proof}

\begin{rem}
$\tau^{-r}x(\lambda)$ is the minimum element of $\{x\in L\mid x_{\lambda}\leq r\}$. Indeed for any $x\in L$ with $x_{\lambda}=s\leq r$, we have $x^{'}:=\tau^{-s}o\wedge x\in \tau^{-s}P$ and $\tau^{s}x^{'}_{\lambda}=0$.
Therefore $\tau^{s}x^{'}\geq x(\lambda)$. In particular we have \[x\geq x^{'}\geq \tau^{-s}x(\lambda)\geq \tau^{-r}x(\lambda).\]
\end{rem}

For any $\lambda\in \Lambda$ we denote by $y(\lambda)$ the unique direct successor of $x(\lambda)$.

\begin{deflem}
\label{dl1}
We can define a partial order $\leq$ on $\Lambda$ as follows:
\[\lambda_{1}\leq \lambda_{2}\stackrel{\mathrm{def}}{\Leftrightarrow} x_{\lambda_{1} }\geq x_{\lambda_{2}}\ \forall x\in L.\]

\end{deflem}

\begin{proof}
It is obvious that (1)\;$\lambda\leq \lambda$ for any $\lambda\in \Lambda$ and (2)\;$\lambda_{1}\leq \lambda_{2}\leq \lambda_{3}$ implies $\lambda_{1}\leq \lambda_{3}$. Therefore it is sufficient to show that (3)\;$\lambda_{1}\leq \lambda_{2} \leq \lambda_{1}$ implies $\lambda_{1}=\lambda_{2}$. Since $\lambda_{1}\leq \lambda_{2}$, we get $x(\lambda_{1})_{\lambda_{2}}\leq x(\lambda_{1})_{\lambda_{1}}=0$. Therefore we obtain $x(\lambda_{1})\geq x(\lambda_{2})$. Similarly $\lambda_{2}\leq \lambda_{1}$ implies $x(\lambda_{2})\geq x(\lambda_{1})$. Then Lemma\;\ref{l1} shows $\lambda_{1}=\lambda_{2}$.   

\end{proof}
\begin{lem}
Let $\lambda_{1},\lambda_{2}\in \Lambda$. Then $\lambda_{1}\leq\lambda_{2}$ if and only if $x(\lambda_{1})_{\lambda_{2}}=0$.
\end{lem}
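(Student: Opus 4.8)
The plan is to treat the two implications separately, leaning on the explicit realization of $L$ as a sub-lattice of $(\mathbb{Z}^{\Lambda},\leq^{\mathrm{op}})$ in which every coordinate of every element is a \emph{non-negative} integer. This non-negativity is automatic: by construction $\phi(x)_{\lambda}=\phi(w,\lambda)$ counts certain arrows of a path $w$ from $o$ to $x$, and $\phi(o)=0$. I shall also use the identity $(\tau^{-s}y)_{\lambda}=y_{\lambda}+s$, obtained by iterating $(\tau^{-1}y)_{\lambda}=y_{\lambda}+1$, and the fact that $x(\lambda)_{\lambda}=0$ since $x(\lambda)\in L(\lambda)$.

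For the forward direction, I assume $\lambda_{1}\leq\lambda_{2}$. By Definition-Lemma \ref{dl1} this means $x_{\lambda_{1}}\geq x_{\lambda_{2}}$ for every $x\in L$. Specializing to $x=x(\lambda_{1})$ gives $x(\lambda_{1})_{\lambda_{1}}\geq x(\lambda_{1})_{\lambda_{2}}$, i.e.\ $0\geq x(\lambda_{1})_{\lambda_{2}}$. Non-negativity of the entries then forces $x(\lambda_{1})_{\lambda_{2}}=0$. (This is precisely the inequality already extracted inside the proof of Definition-Lemma \ref{dl1}, so this half is essentially in place.)

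For the reverse direction, I assume $x(\lambda_{1})_{\lambda_{2}}=0$ and fix an arbitrary $x\in L$; the goal is to show $x_{\lambda_{2}}\leq x_{\lambda_{1}}$, which by Definition-Lemma \ref{dl1} yields $\lambda_{1}\leq\lambda_{2}$. I would set $s:=x_{\lambda_{1}}$, so that $x$ lies in $\{y\in L\mid y_{\lambda_{1}}\leq s\}$. By the Remark following Lemma \ref{l1} the minimum element of this set is $\tau^{-s}x(\lambda_{1})$, whence $\tau^{-s}x(\lambda_{1})\leq x$ in $L$; translated into $(\mathbb{Z}^{\Lambda},\leq^{\mathrm{op}})$ this reads $(\tau^{-s}x(\lambda_{1}))_{\mu}\geq x_{\mu}$ for all $\mu\in\Lambda$. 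Reading off the $\lambda_{2}$-entry and using $(\tau^{-s}x(\lambda_{1}))_{\lambda_{2}}=x(\lambda_{1})_{\lambda_{2}}+s=s$ gives $x_{\lambda_{2}}\leq s=x_{\lambda_{1}}$, as required.

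I do not anticipate a genuine obstacle here: both halves are short consequences of results already available. The only point demanding care is bookkeeping the order reversal built into $\leq^{\mathrm{op}}$ — namely that the minimum of a subset of $L$ carries the \emph{largest} coordinate vector in $(\mathbb{Z}^{\Lambda},\leq^{\mathrm{op}})$, so that $\tau^{-s}x(\lambda_{1})\leq x$ in $L$ becomes a coordinatewise $\geq$ of vectors. Once that translation is made correctly, the result drops out by direct substitution.
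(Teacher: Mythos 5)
Your proof is correct. The forward direction coincides with the paper's: specialize $x_{\lambda_1}\geq x_{\lambda_2}$ to $x=x(\lambda_1)$ and use non-negativity of the coordinates. The backward direction is where you diverge. The paper argues by contrapositive: assuming some $x\in L$ has $x_{\lambda_1}<x_{\lambda_2}$, it constructs the element $c:=\tau^{x_{\lambda_1}}(\tau^{-x_{\lambda_1}}o\wedge x)\in P$, checks $c_{\lambda_1}=0$ and $c_{\lambda_2}=x_{\lambda_2}-x_{\lambda_1}>0$, and then uses minimality of $x(\lambda_1)$ in $L(\lambda_1)$ to force $x(\lambda_1)_{\lambda_2}\geq c_{\lambda_2}>0$. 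You instead argue directly: with $s:=x_{\lambda_1}$, the Remark after Lemma \ref{l1} gives $x\geq\tau^{-s}x(\lambda_1)$, and reading off the $\lambda_2$-coordinate (using $(\tau^{-s}y)_\lambda=y_\lambda+s$ and the hypothesis $x(\lambda_1)_{\lambda_2}=0$) yields $x_{\lambda_2}\leq s=x_{\lambda_1}$. The two arguments are close relatives — the Remark you invoke is itself proved by exactly the paper's $\tau^{s}(\tau^{-s}o\wedge x)$ construction — so you have in effect routed the paper's computation through an already-established statement rather than redoing it. What your version buys is brevity and a positive (non-contrapositive) formulation that makes the quantifier structure of Definition-Lemma \ref{dl1} transparent; what the paper's version buys is self-containedness, since it does not lean on the Remark (which the author states with only a sketch of proof). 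Your bookkeeping of the order reversal in $\leq^{\mathrm{op}}$ is handled correctly, which is indeed the only delicate point.
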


\begin{proof}
First we assume that $\lambda_{1}\leq \lambda_{2}$. Then we get $x(\lambda_{1})_{\lambda_{2}}\leq x(\lambda_{1})_{\lambda_{1}}=0$.

Next we assume that there exists $x\in L$ such that $x_{\lambda_{1}}<x_{\lambda_{2}}$. We consider an element
$c:=\tau^{x_{\lambda_{1}}}(\tau^{-x_{\lambda_{1}}}o\wedge x)\in P$\;(note that $\tau^{-x_{\lambda_{1}}}o\wedge x \in \tau^{-x_{\lambda_{1}}}P$). Since $c_{\lambda_{1}}=0$ and $c_{\lambda_{2}}=x_{\lambda_{2}}-x_{\lambda_{1}}>0$, we obtain $0<c_{\lambda_{2}}\leq x(\lambda_{1})_{\lambda_{2}}$.

\end{proof} 
 
\begin{defn} Let $(L,\tau^{-1})$ be a pair satisfying the conditions $(c_{0})\sim (c_{4})$. We define a quiver
$Q=Q(L,\tau^{-1})$ having $\Lambda$ as the
set of vertices as follows:

We draw an arrow $\lambda\rightarrow \lambda^{'}$ in $Q$ if 
$x(\lambda)_{\lambda^{'}}=1$ and $x(\lambda^{'})_{\lambda}=0$.
\end{defn}

Now we denote by $G(\lambda,\tau^{-1})$ the underlying graph of $Q(\lambda,\tau^{-1})$. Note that there is an edge $\lambda-\lambda^{'}$ in $G(\Lambda,\tau^{-1})$ if and only if $x(\lambda)_{\lambda^{'}}+x(\lambda^{'})_{\lambda}=1$.

\begin{defn}
\label{d1}
Let  $(L,\tau^{-1})$ be a pair satisfying the conditions $(c_{0})\sim (c_{4})$. We define a graph
$G^{'}=G^{'}(L,\tau^{-1})$ having $\Lambda$ as a
set of vertices as follows:

We draw an edge  $\lambda-\lambda^{'}$ in $G^{'}$ if one of the following hold.\\
$(1)$\;there is an edge $\lambda-\lambda^{'}$ in the underlying graph of the Hasse quiver of $(\Lambda,\leq)$.\\
$(2)$\;there is an arrow $\alpha\in L_{1}$ such that $s(\alpha)=y(\lambda)$ and $\alpha/\!\raisebox{-2pt}{$\sim$}=\lambda^{'}$.\\
$(3)$\;there is an arrow $\beta\in L_{1}$ such that $s(\beta)=y(\lambda^{'})$ and $\beta/\!\raisebox{-2pt}{$\sim$}=\lambda$.

\end{defn}

 For any quiver $Q\in\mathcal{Q}$ we define a quiver $\overline{Q}$ satisfying the condition $(\mathrm{C})$ as follows:\\
$(1)$\;$\overline{Q}_{0}=Q_{0}$.\\
$(2)$\;For any pair $(x,\alpha)\in Q_{0}\times Q_{1}$ with $\delta (x)=1$ and $\alpha$ being an edge satisfying either $s(\alpha)=x$ or $t(\alpha)=x$,
 draw new edge $\alpha^{c}:s(\alpha)\rightarrow t(\alpha)$ in $\overline{Q}$. For example, if we consider the following quiver $Q$:
\[%WinTpicVersion3.08
\unitlength 0.1in
\begin{picture}( 11.3000,  0.6500)( 11.7000,-10.3000)
% CIRCLE 2 0 3 0
% 4 1200 1000 1200 1030 1200 1030 1200 1030
% 
\special{pn 8}%
\special{ar 1200 1000 30 30  0.0000000 6.2831853}%
% VECTOR 2 0 3 0
% 2 1260 990 1660 990
% 
\special{pn 8}%
\special{pa 1260 990}%
\special{pa 1660 990}%
\special{fp}%
\special{sh 1}%
\special{pa 1660 990}%
\special{pa 1594 970}%
\special{pa 1608 990}%
\special{pa 1594 1010}%
\special{pa 1660 990}%
\special{fp}%
% CIRCLE 2 0 3 0
% 4 1720 1000 1720 1030 1720 1030 1720 1030
% 
\special{pn 8}%
\special{ar 1720 1000 30 30  0.0000000 6.2831853}%
% VECTOR 2 0 3 0
% 2 1780 970 2180 970
% 
\special{pn 8}%
\special{pa 1780 970}%
\special{pa 2180 970}%
\special{fp}%
\special{sh 1}%
\special{pa 2180 970}%
\special{pa 2114 950}%
\special{pa 2128 970}%
\special{pa 2114 990}%
\special{pa 2180 970}%
\special{fp}%
% VECTOR 2 0 3 0
% 2 1780 1020 2180 1020
% 
\special{pn 8}%
\special{pa 1780 1020}%
\special{pa 2180 1020}%
\special{fp}%
\special{sh 1}%
\special{pa 2180 1020}%
\special{pa 2114 1000}%
\special{pa 2128 1020}%
\special{pa 2114 1040}%
\special{pa 2180 1020}%
\special{fp}%
% CIRCLE 2 0 3 0
% 4 2270 1000 2270 1030 2270 1030 2270 1030
% 
\special{pn 8}%
\special{ar 2270 1000 30 30  0.0000000 6.2831853}%
\end{picture}%\]
then $\overline{Q}$ is given by the following:
\[%WinTpicVersion3.08
\unitlength 0.1in
\begin{picture}( 11.3000,  0.6500)( 11.7000,-10.3000)
% CIRCLE 2 0 3 0
% 4 1200 1000 1200 1030 1200 1030 1200 1030
% 
\special{pn 8}%
\special{ar 1200 1000 30 30  0.0000000 6.2831853}%
% VECTOR 2 0 3 0
% 2 1260 970 1660 970
% 
\special{pn 8}%
\special{pa 1260 970}%
\special{pa 1660 970}%
\special{fp}%
\special{sh 1}%
\special{pa 1660 970}%
\special{pa 1594 950}%
\special{pa 1608 970}%
\special{pa 1594 990}%
\special{pa 1660 970}%
\special{fp}%
% CIRCLE 2 0 3 0
% 4 1720 1000 1720 1030 1720 1030 1720 1030
% 
\special{pn 8}%
\special{ar 1720 1000 30 30  0.0000000 6.2831853}%
% VECTOR 2 0 3 0
% 2 1780 970 2180 970
% 
\special{pn 8}%
\special{pa 1780 970}%
\special{pa 2180 970}%
\special{fp}%
\special{sh 1}%
\special{pa 2180 970}%
\special{pa 2114 950}%
\special{pa 2128 970}%
\special{pa 2114 990}%
\special{pa 2180 970}%
\special{fp}%
% VECTOR 2 0 3 0
% 2 1780 1020 2180 1020
% 
\special{pn 8}%
\special{pa 1780 1020}%
\special{pa 2180 1020}%
\special{fp}%
\special{sh 1}%
\special{pa 2180 1020}%
\special{pa 2114 1000}%
\special{pa 2128 1020}%
\special{pa 2114 1040}%
\special{pa 2180 1020}%
\special{fp}%
% CIRCLE 2 0 3 0
% 4 2270 1000 2270 1030 2270 1030 2270 1030
% 
\special{pn 8}%
\special{ar 2270 1000 30 30  0.0000000 6.2831853}%
% VECTOR 2 0 3 0
% 2 1260 1020 1660 1020
% 
\special{pn 8}%
\special{pa 1260 1020}%
\special{pa 1660 1020}%
\special{fp}%
\special{sh 1}%
\special{pa 1660 1020}%
\special{pa 1594 1000}%
\special{pa 1608 1020}%
\special{pa 1594 1040}%
\special{pa 1660 1020}%
\special{fp}%
\end{picture}%\] 

Now we give an  necessary and sufficient condition for $(L,\tau^{-1})$ being isomorphic to $(\mathcal{T}_{\mathrm{p}}(Q),\tau_{Q}^{-1})$\;(i.e.\;there exists poset isomorphism $\rho:L\simeq \mathcal{T}_{\mathrm{p}}(Q)$ such that $\rho(\tau^{-1}x)=\tau^{-1}_{Q}\rho(x)$ holds for any $x\in L$) for some quiver $Q\in \mathcal{Q}$.

\begin{thm}
\label{t2}
Let $L$ be an infinite distributive lattice with the maximum element $o$ and $\tau^{-1}$ be an inner lattice inclusion of $L$ which induces an inner quiver inclusion of $\overrightarrow{L}$.  Then the following are equivalent.\\
$(a)$\;$(L,\tau^{-1})\simeq(\mathcal{T}_{\mathrm{p}}(Q),\tau_{Q}^{-1})$ for some quiver $Q\in\mathcal{Q}$.\\
$(b)$\;$(L,\tau^{-1})$ satisfies the conditions $(c_{0})\sim (c_{4})$ and, \[(c_{5}): G^{'}(L,\tau^{-1})_{1}\subset G(L,\tau^{-1})_{1}.\]

In this case we can take $Q=\overline{Q(L,\tau^{-1})}$. 

 \end{thm}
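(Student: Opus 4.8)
The plan is to funnel both implications through a single combinatorial identity. First I record two descriptions of the relevant posets as subposets of $(\mathbb{Z}^{\bullet},\leq^{\mathrm{op}})$. By Proposition~\ref{p} together with Theorem~\ref{t0}, $T\simeq\bigoplus_x\tau^{-r_x}P(x)$ is tilting exactly when $r_{\lambda'}\leq r_\lambda+l_Q(\lambda,\lambda')$ for all $\lambda,\lambda'$, so that
\[
\mathcal{T}_{\mathrm{p}}(Q)=\{(r_x)_{x\in Q_0}\in\mathbb{Z}_{\geq0}^{Q_0}\mid r_{\lambda'}\leq r_\lambda+l_Q(\lambda,\lambda')\ \forall\,\lambda,\lambda'\}
\]
depends, as a subposet of $(\mathbb{Z}_{\geq0}^{Q_0},\leq^{\mathrm{op}})$, only on the function $l_Q$. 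Dually, for $L$ satisfying $(c_0)$–$(c_4)$ the decomposition $x=\bigvee_\lambda\tau^{-x_\lambda}x(\lambda)$ together with Proposition~\ref{p1}(5) (joins are coordinatewise minima) shows that $\phi(L)\subset(\mathbb{Z}^{\Lambda},\leq^{\mathrm{op}})$ is exactly $\{(r_\lambda)_\lambda\mid r_{\lambda'}\leq r_\lambda+x(\lambda)_{\lambda'}\ \forall\,\lambda,\lambda'\}$, using $x(\lambda')_{\lambda'}=0$ to see that the defining minimum is attained at $\lambda=\lambda'$. Hence, once $\Lambda$ is identified with $Q_0$, the pair $(L,\tau^{-1})$ is isomorphic to $(\mathcal{T}_{\mathrm{p}}(Q),\tau_Q^{-1})$ as soon as $x(\lambda)_{\lambda'}=l_Q(\lambda,\lambda')$ for all $\lambda,\lambda'$: the common image then furnishes the order isomorphism, and since both $\tau^{-1}$ and $\tau_Q^{-1}$ act by $(r_\lambda)\mapsto(r_\lambda+1)$ this isomorphism is automatically $\tau$-equivariant.

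For $(a)\Rightarrow(b)$ I would argue as follows. Since $L$ is an infinite distributive lattice, Theorem~\ref{t1} forces $Q$ to satisfy $(\mathrm{C})$; Lemma~\ref{l0} then transports $(c_0)$–$(c_4)$ from $(\mathcal{T}_{\mathrm{p}}(Q),\tau_Q^{-1})$ to $(L,\tau^{-1})$ and identifies $\Lambda=Q_0$ with $x(\lambda)_{\lambda'}=l_Q(\lambda,\lambda')$ (because $x(\lambda)$ is the minimum tilting module containing $P(\lambda)$, namely $T(\lambda)=\bigoplus_x\tau^{-l_Q(\lambda,x)}P(x)$). It remains to check $(c_5)$, i.e.\ that every generating edge $\lambda-\lambda'$ of $G'(L,\tau^{-1})$ satisfies the defining relation $l_Q(\lambda,\lambda')+l_Q(\lambda',\lambda)=1$ of a $G(L,\tau^{-1})$-edge. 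Type~(1) edges are covers of $(\Lambda,\leq)$ and correspond to single arrows of $Q$; types~(2) and~(3) come from an arrow of $\ptiltq(Q)$ issuing from $y(\lambda)$ and incrementing coordinate $\lambda'$, and I would verify the required relation by translating such an arrow into the exchange/mutation rule for $\ptiltq(Q)$ and invoking condition $(\mathrm{C})$. This is a finite local case analysis rather than a conceptual difficulty.

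For $(b)\Rightarrow(a)$ I would set $Q:=\overline{Q(L,\tau^{-1})}$. By construction $Q\in\mathcal{Q}$ satisfies $(\mathrm{C})$, and because the edges added in forming $\overline{\;\cdot\;}$ are parallel to existing ones they leave distances unchanged, so $l_Q=l_{Q(L,\tau^{-1})}$; combined with the first paragraph, everything reduces to the identity $l_Q(\lambda,\lambda')=x(\lambda)_{\lambda'}$. The easy half is $x(\lambda)_{\lambda'}\leq l_Q(\lambda,\lambda')$: from $x(\lambda)\geq\tau^{-x(\lambda)_\mu}x(\mu)$ (since $x(\lambda)$ lies in $\{y\mid y_\mu\leq x(\lambda)_\mu\}$, whose minimum is $\tau^{-x(\lambda)_\mu}x(\mu)$) one reads off the triangle inequality $x(\lambda)_{\lambda'}\leq x(\lambda)_\mu+x(\mu)_{\lambda'}$. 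A forward arrow $\mu\to\mu'$ of $Q(L,\tau^{-1})$ has $x(\mu)_{\mu'}=1$ while its reverse contributes $x(\mu')_\mu=0$, so summing the triangle inequality along any path $w$ from $\lambda$ to $\lambda'$ in $\widetilde{Q(L,\tau^{-1})}$ gives $x(\lambda)_{\lambda'}\leq c^{+}(w)$, hence $x(\lambda)_{\lambda'}\leq l_Q(\lambda,\lambda')$. Granting the reverse inequality, the image comparison yields $\phi(L)=\mathcal{T}_{\mathrm{p}}(Q)$ and therefore $(L,\tau^{-1})\simeq(\mathcal{T}_{\mathrm{p}}(Q),\tau_Q^{-1})$.

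The main obstacle is the remaining inequality $l_Q(\lambda,\lambda')\leq x(\lambda)_{\lambda'}$, that is, exhibiting a path in $\widetilde{Q(L,\tau^{-1})}$ from $\lambda$ to $\lambda'$ with exactly $x(\lambda)_{\lambda'}$ forward arrows; this is precisely where $(c_5)$ is indispensable. I would prove it by induction on $n=x(\lambda)_{\lambda'}$. For $n=0$ one has $\lambda\leq\lambda'$, so a saturated chain of covers in $(\Lambda,\leq)$ joins them; by $(c_5)(1)$ each cover is an edge of $Q(L,\tau^{-1})$, and since forward arrows strictly decrease the order, each such edge is traversed as a reverse arrow, producing a path with $c^{+}=0$. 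For $n>0$ the task is to find a neighbour $\mu$ of $\lambda$ with $\lambda\to\mu$ a forward arrow and $x(\mu)_{\lambda'}=n-1$, after which induction closes the argument; such a $\mu$ is to be extracted from the $(c_4)$-path leaving $y(\lambda)$ (whose arrow-classes realize each element of $\Lambda$ exactly once), the adjacency of the relevant class to $\lambda$ being guaranteed by $(c_5)(2)$–$(3)$, whereupon the triangle inequality forces $x(\mu)_{\lambda'}=n-1$. Carrying out this descent rigorously—matching the multiset of $\sim$-classes along the $(c_4)$-path with the adjacencies supplied by $(c_5)$—is the delicate point on which the construction $Q=\overline{Q(L,\tau^{-1})}$ ultimately rests.
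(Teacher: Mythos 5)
Your overall architecture is essentially the paper's own: describe $\mathcal{T}_{\mathrm{p}}(Q)$ as $\{r\in\mathbb{Z}_{\geq 0}^{Q_{0}}\mid r_{\lambda^{'}}\leq r_{\lambda}+l_{Q}(\lambda,\lambda^{'})\}$ via Proposition~\ref{p} and Theorem~\ref{t0}, describe $\phi(L)$ as $\{r\mid r_{\lambda^{'}}\leq r_{\lambda}+x(\lambda)_{\lambda^{'}}\}$ (this is exactly the paper's $L^{\ast}$, and your join-decomposition argument for the reverse inclusion is correct), and reduce both implications to the identity $x(\lambda)_{\lambda^{'}}=l_{Q}(\lambda,\lambda^{'})$; your proof of the inequality $x(\lambda)_{\lambda^{'}}\leq l_{Q}(\lambda,\lambda^{'})$ is the same chaining of triangle inequalities as the paper's claim $(\ast\ast)$. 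But the two genuinely hard steps are not carried out. In $(a)\Rightarrow(b)$, the verification of $(c_{5})$ for edges of type $(2)$ and $(3)$ is dismissed as a ``finite local case analysis,'' whereas in the paper it is a substantive argument: for an arrow $\alpha$ with $s(\alpha)=y(a)$, $v(\alpha)=b$, one unpacks a minimal path realizing $l_{Q}(a,b)$ into segments $a\leq a_{1}$, $a_{i}\rightarrow b_{i}\leq a_{i+1}$, $b_{l}\leq b$, and uses the tilting inequalities on $t(\alpha)$ to force $a=a_{1}$ and $b=b_{1}$. Nothing in your text supplies this, and it is not automatic.

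More seriously, your inductive scheme for the remaining inequality $l_{Q}(\lambda,\lambda^{'})\leq x(\lambda)_{\lambda^{'}}$ is based on a false structural assumption: that whenever $x(\lambda)_{\lambda^{'}}=n>0$ there is a forward arrow $\lambda\rightarrow\mu$ in $Q(L,\tau^{-1})$ with $x(\mu)_{\lambda^{'}}=n-1$. Take $Q$ with vertices $\lambda,a,b$ and doubled arrows $a\to\lambda$, $a\to b$ (so $(\mathrm{C})$ holds) and $L=\mathcal{T}_{\mathrm{p}}(Q)$. Then $x(\lambda)_{b}=l_{Q}(\lambda,b)=1>0$, but $x(\lambda)_{a}=l_{Q}(\lambda,a)=0$ and $x(b)_{\lambda}=l_{Q}(b,\lambda)=1$, so the only arrows of $Q(L,\tau^{-1})$ are $a\to\lambda$ and $a\to b$: there is no forward arrow out of $\lambda$ at all. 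Any path realizing $l_{Q}(\lambda,b)$ must begin with a reverse arrow ($\lambda\leftarrow a\to b$), and passing from $\lambda$ to $a$ does not decrease the quantity you induct on, since $x(a)_{b}=1=x(\lambda)_{b}$; the class extracted from the $(c_{4})$-path at $y(\lambda)$ is $a$, and $(c_{5})$ only gives the edge $a\to\lambda$ pointing the wrong way. So your descent cannot even start, and fixing it would require a different well-founded induction (the dichotomy ``either a forward arrow drops $n$, or one can move up a cover in $(\Lambda,\leq)$'' is precisely what would have to be proved). The paper avoids this entirely: it proves $L^{\ast}\subseteq L$ by a minimal-counterexample argument over $\Delta^{\ast}$ combined with the claim $(\dagger)$ about two consecutive arrows $x\stackrel{\alpha}{\rightarrow}y\stackrel{\beta}{\rightarrow}z$ of $\overrightarrow{L}$, proved by induction on $\max\{z_{\lambda},z_{\lambda^{'}}\}$ with three cases, invoking $(c_{5})$ in the base case and in case $(2)$. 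As written, your proof of $(b)\Rightarrow(a)$ is therefore not merely incomplete but rests on a step that fails in simple examples.
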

\begin{proof}
$((a)\Rightarrow (b))$\;Let $Q\in \mathcal{Q}$ such that $(L,\tau^{-1})\simeq (\mathcal{T}_{\mathrm{p}}(Q)),\tau^{-1}_{Q}$. Then Theorem\;\ref{t1} implies that $Q$ satisfies the conditions $(\mathrm{C})$. Therefore Lemma\;\ref{l0} implies that $(\mathcal{T}_{\mathrm{p}}(Q),\tau_{Q}^{-1})$ satisfies the conditions $(c_{0})\sim(c_{4})$. We show that
$(\mathcal{T}_{\mathrm{p}}(Q),\tau_{Q}^{-1})$ satisfies the condition $(c_{5})$.

In this case we note that $\Lambda=Q_{0}$, $x(a)_{b}=l_{Q}(a,b)$ and $a\leq b\Leftrightarrow l_{Q}(a,b)=0$. Let $a,b\in Q_{0}$ satisfying one of the following\;(see Definition\;\ref{d1}):\\
(1)There is an edge $a-b$ in the underlying graph of the Hasse-quiver of $(Q_{0},\leq)$.\\
(2)\;There is an arrow $\alpha\in \ptiltq(Q)_{1}$ such that $s(\alpha)=y(a)$ and $v(\alpha)=b$.\\
(3)\;There is an arrow $\beta\in \ptiltq(Q)_{1}$ such that $s(\beta)=y(b)$ and $\beta/\!\raisebox{-2pt}{$\sim$}=a$.

It is sufficient to show that $l_{Q}(a,b)+l_{Q}(b,a)=1$.
First we assume $(a,b)$ satisfies (1). Then it is obvious that $l_{Q}(a,b)+l_{Q}(b,a)=1$. Next we assume $(a,b)$ satisfies (2). Let $l_{Q}(a,b)=l$. If $l=0$, then there exists a path $a\leftarrow a_{1}\leftarrow \cdots\leftarrow a_{r}=b$ in $Q$. If $a_{1}\neq b$, then 
\[t(\alpha)_{b}=l_{Q}(a,b)+1>0=l_{Q}(a,a_{1})+l_{Q}(a_{1},b)=t(\alpha)_{a_{1}}+l_{Q}(a_{1},b).\] Therefore we get a contradiction. In particular $b=a_{1}$. We assume $l>0$. 
 Then there exists $(a_{1},a_{2}\cdots,a_{l})\in Q_{0}^{l}$ and $(b_{1},\cdots b_{l})\in Q_{0}^{l}$ such that $a\leq a_{1}$, $a_{i}\rightarrow b_{i}\leq a_{i+1}$\;$(i=1,\cdots l)$ and $b_{l}\leq b$ in $Q$. 
If $a\neq a_{1}$, then we obtain \[t(\alpha)_{b}=l_{Q}(a,b)+1>l_{Q}(a,b)=l_{Q}(a,a_{1})+l_{Q}(a_{1},b)=t(\alpha)_{a_{1}}+l_{Q}(a_{1},b).\] Therefore we get a contradiction. If $b\neq b_{1}$, then we obtain \[t(\alpha)_{b}=l_{Q}(a,b)+1>l_{Q}(a,b)=l_{Q}(a,b_{1})+l_{Q}(b_{1},b)=t(\alpha)_{b_{1}}+l_{Q}(b_{1},b).\] We get a contradiction. Therefore $a=a_{1}$ and $b=b_{1}$. In particular we get $l_{Q}(a,b)+l_{Q}(b,a)=1$. Similarly we obtain
$l_{Q}(a,b)+l_{Q}(b,a)=1$ in the case of (3).

$((b)\Rightarrow (a))$\;Let $Q=\overline{Q(L,\tau^{-1})}$. It is sufficient to show that $x\in \mathbb{Z}^{\Lambda}_{\geq 0}$ is in $L$ if and only if
\[(\ast)\cdots x_{\lambda}\leq x_{\lambda^{'}}+l_{Q}(\lambda^{'},\lambda),\ \forall \lambda,\lambda^{'}\in \Lambda.\]

Let $L^{\ast}:=\{z\in \mathbb{Z}_{\geq 0}^{\Lambda}\mid z\ \mathrm{satisfies\ }(\ast)\}(\simeq \mathcal{T}_{\mathrm{p}}(Q))$. For any $x\in L$ we consider $f_{\lambda^{'}}(x):=\tau^{x_{\lambda^{'}}}(\tau^{-x_{\lambda^{'}}}o\wedge x)$.
It is easy to check that, 
\[f_{\lambda^{'}}(x)_{\lambda}=\mathrm{max}\{0,x_{\lambda}-x_{\lambda^{'}}\}.\] Therefore we obtain $f_{\lambda^{'}}(x)\geq x(\lambda^{'})$ and $x_{\lambda}-x_{\lambda^{'}}\leq f_{\lambda^{'}}(x)_{\lambda}\leq x(\lambda^{'})_{\lambda}$. Then we claim, \[(\ast \ast)\cdots x(\lambda)_{\lambda^{'}}\leq l_{Q}(\lambda,\lambda^{'})\ (\forall \lambda,\lambda^{'}\in \Lambda).\]
We note that $(\ast \ast)$ implies $L\subset L^{\ast}$.
Let $l=l_{Q}(\lambda,\lambda^{'})$. Without loss of generality, we can assume $0<l<\infty$. Then there exists $(\lambda_{1},\lambda_{2}\cdots,\lambda_{l})\in \Lambda^{l}$ and $(\lambda^{'}_{1},\cdots \lambda^{'}_{l})\in \Lambda^{l}$ such that $\lambda\leq \lambda_{1}$, $\lambda_{i}\rightarrow \lambda^{'}_{i}\leq \lambda_{i+1}$\;$(i=1,\cdots l)$ in $Q$ and $\lambda^{'}_{l}\leq \lambda^{'}$. In particular we obtain 
\[x(\lambda)_{\lambda^{'}}\leq x(\lambda_{1})_{\lambda^{'}}\leq x(\lambda^{'}_{1})_{\lambda^{'}}+1\leq x(\lambda_{2})_{\lambda^{'}}+1 \leq\cdots \leq x(\lambda^{l})_{\lambda^{'}}+l-1\leq x(\lambda^{'}_{l})_{\lambda^{'}}+l=l.\]

Therefore we obtain $(\ast\ast)$. In particular $L\subset L^{\ast}$. 

We show that $L^{\ast}\setminus L=\emptyset$. Let $z\in L^{\ast}\setminus\{o\}$ and $\lambda^{'}$ be a maximal element of
$\Lambda(z):=\{\lambda\in \Lambda\mid z_{\lambda}\geq z_{\lambda^{''}} \ \forall \lambda^{''}\in \Lambda\}$.
We define $z^{'}\in \mathbb{Z}_{\geq 0}^{\Lambda}$ as follows:
\[z_{\lambda}:=\left\{\begin{array}{ll}
z_{\lambda^{'}}-1 & \mathrm{if\ }\lambda=\lambda^{'} \\ 
z_{\lambda} & \mathrm{if\ }\lambda\neq \lambda^{'}.
\end{array}\right. \] We can easily check that $z^{'}\in L^{\ast}$. Indeed it is sufficient to check
\[z_{\lambda}\leq z_{\lambda^{'}}-1+x(\lambda^{'})_{\lambda},\  \forall\lambda\in \Lambda(z)\setminus \{\lambda^{'}\}).\]
Now above inequalities followed from maximality of $\lambda^{'}$.
In particular there is a path  
\[w:o=z_{0}\rightarrow z_{1}\rightarrow\cdots \rightarrow z_{r}=z,\]
in $\mathbb{Z}_{\geq 0}^{\Lambda}$ such that $z_{i}\in L^{\ast}$. This implies that if 
$L^{\ast}\setminus L\neq \emptyset$, then $\Delta^{\ast}:=\{z\in L^{\ast}\setminus L\mid\ \exists x\in L\ \mathrm{such\ that\ }x\rightarrow z\;\mathrm{in }\ \mathbb{Z}_{\geq 0}^{\Lambda}\}\neq \emptyset$.

 Suppose $L^{\ast}\setminus L\neq \emptyset$ and let
$z\in \Delta^{\ast}$. Then, by the definition of $\Delta^{\ast}$, There exists $(x,\lambda)\in L\times \Lambda$ such that  $x\rightarrow z$ in $\mathbb{Z}_{\geq 0}^{\Lambda}$ and $z_{\lambda}=x_{\lambda}+1$. We consider a path, 
\[x=x^{0}\rightarrow x^{1}\rightarrow \cdots \rightarrow x^{r} =\tau^{-1}x, \] in $L$.
Put $s:=\mathrm{min}\{i\mid x^{i}_{\lambda}=x_{\lambda}+1=z_{\lambda}\}$. Then there is a path,
\[z=z^{0}\rightarrow z^{1}\rightarrow \cdots z^{s-1}=x^{s}\in L,\] in $L^{\ast}$ where $z^{i}:=x^{i}\wedge z$. In particular there exists $t<s$ such that $z^{t-1}\in L^{\ast}\setminus L$ and $z^{t}\in L$. Let $\lambda^{'}\in \Lambda$ such that $x^{t}_{\lambda^{'}}=x^{t-1}_{\lambda^{'}}+1$.  Then we note that 
\[\begin{array}{lllllllll}
x^{t-1}_{\lambda}+1 & = & z^{t-1}_{\lambda} & \leq  z^{t-1}_{\lambda^{'}}+l_{Q}(\lambda^{'},\lambda)&= & x^{t-1}_{\lambda^{'}}+l_{Q}(\lambda^{'},\lambda), \\ 
x^{t-1}_{\lambda^{'}}+1 & = & x^{t}_{\lambda^{'}}  & \leq  x^{t}_{\lambda}+l_{Q}(\lambda,\lambda^{'})&= & x^{t-1}_{\lambda}+l_{Q}(\lambda,\lambda^{'}),
\end{array} \] 
In particular, we obtain $2\leq l_{Q}(\lambda,\lambda^{'})+l_{Q}(\lambda^{'},\lambda)$.

Therefore it is sufficient to prove the following claim $(\dagger)$:\\
$(\dagger)$\ Let $w:x\stackrel{\alpha}{\rightarrow}y\stackrel{\beta}{\rightarrow}z$ be a path in $\overrightarrow{L}$ with $\alpha/\!\raisebox{-2pt}{$\sim$}=\lambda$ and $\beta/\!\raisebox{-2pt}{$\sim$}=\lambda^{'}$. Let $p=(p_{\lambda})_{\lambda\in \Lambda}\in \mathbb{Z}_{\geq 0}^{\Lambda}$ such that $p_{\lambda^{''}}:=\left\{\begin{array}{ll}
x_{\lambda^{'}}+1 &\mathrm{if\ }\lambda^{''}=\lambda^{'} \\ 
x_{\lambda^{''}} & \mathrm{if\ }\lambda^{''}\neq\lambda^{'}.
\end{array}\right. $ If $p\not\in L$, then $l_{Q}(\lambda,\lambda^{'})+l_{Q}(\lambda^{'},\lambda)=1$. 

We prove $(\dagger)$ with the using of an induction on $r:=\mathrm{max}\{z_{\lambda},z_{\lambda^{'}}\}$.\\
$(r=1)$\;We note that $x\geq x(\lambda)$. 
If $x(\lambda)_{\lambda^{'}}>0$, then
\[(x(\lambda)\vee z)_{\lambda^{''}}:=\left\{\begin{array}{ll}
0 &\mathrm{if\ }\lambda^{''}=\lambda \\
1 &\mathrm{if\ }\lambda^{''}=\lambda^{'}\\ 
x_{\lambda^{''}} & \mathrm{otherwise }.
\end{array}\right.\] 
In particular we get $p=x(\lambda)\vee z\in L$. Therefore we get a contradiction. Since $x(\lambda)_{\lambda^{'}}=0$, we obtain $\lambda<\lambda^{'}$.
If there exists $\lambda^{''}\in \Lambda$ such that $\lambda<\lambda^{''}<\lambda^{'}$, then $z_{\lambda^{''}}=x_{\lambda^{''}}\leq x_{\lambda}=0<1=z_{\lambda^{'}}$. This is a contradiction. Therefore the condition $(c_{5})$ implies that there is an arrow $\lambda^{'}\rightarrow \lambda$ in $Q$. In particular we get $l_{Q}(\lambda,\lambda^{'})+l_{Q}(\lambda^{'},\lambda)=1$.\\
$(r>1)$\;We consider the following three cases:\\
(1)\;$x_{\lambda}>0$ and $x_{\lambda^{'}}>0$.
(2)\;$x_{\lambda}=0$ and $x_{\lambda^{'}}>0$.
(3)\;$x_{\lambda}>0$ and $x_{\lambda^{'}}=0$.

In the case of $(1)$ let $x^{'}:=\tau(\tau^{-1}o\wedge x)$, $y^{'}:=\tau(\tau^{-1}o\wedge y)$, and $z^{'}:=\tau(\tau^{-1}\wedge z)$. Then there exists a path $x^{'}\stackrel{\alpha^{'}}{\rightarrow} y^{'}\stackrel{\beta^{'}}{\rightarrow} z^{'}$. Let $p^{'}\in \mathbb{Z}_{\geq 0}^{\Lambda} $ with $p^{'}_{\lambda^{''}}:=\left\{\begin{array}{ll}
x^{'}_{\lambda^{'}}+1 &\mathrm{if\ }\lambda^{''}=\lambda^{'} \\ 
x^{'}_{\lambda^{''}} & \mathrm{if\ }\lambda^{''}\neq\lambda^{'}.
\end{array}\right.$ We note that $\alpha^{'}/\!\raisebox{-2pt}{$\sim$}=\lambda$ and $\beta^{'}/\!\raisebox{-2pt}{$\sim$}=\lambda^{'}$. If $p^{'}\in L$, then $p=(\tau^{-1}p^{'}\vee z)\wedge x \in L$. This is a contradiction.
Therefore we obtain $p^{'}\not\in L$. Since $z^{'}_{\lambda}=z_{\lambda}-1$ and
$z^{'}_{\lambda^{'}}=z_{\lambda^{'}}-1$, the assertion follows from the hypothesis of induction.

In the case of (2), we first show that $x_{\lambda^{'}}=x(\lambda)_{\lambda^{'}}$. If $x(\lambda)_{\lambda^{'}}>x_{\lambda^{'}}$, then we get $p=x(\lambda)\vee z\in L$. This is a contradiction. Therefore, since $x\geq x(\lambda)$, we obtain $x_{\lambda^{'}}=x(\lambda)_{\lambda^{'}}$.  Now let $z^{'}:=y(\lambda)\wedge z$. Since 
\[z^{'}_{\lambda^{''}}=\left\{ \begin{array}{ll}
1 &\mathrm{if\ }\lambda^{''}=\lambda \\ 
x(\lambda)_{\lambda^{'}}+1 & \mathrm{if\ }\lambda^{''}=\lambda^{'}\\
x(\lambda)_{\lambda^{''}} & \mathrm{otherwise},
\end{array}\right.  \]   we obtain a path $x(\lambda)\rightarrow y(\lambda)\stackrel{\beta^{'}}{\rightarrow} z^{'}$ with $\beta^{'}/\!\raisebox{-2pt}{$\sim$}=\lambda^{'}$. Therefore the assertion follows from the condition $(c_{5})$.

Finally we consider the case of (3). Let $p{'}:=(\tau^{-1}o\vee z)\wedge x$. Then it is easy to check that $p_{\lambda^{''}}=p^{'}_{\lambda^{''}}$ for any $\lambda^{''}\in \Lambda$. In particular we get $p\in L$. This is a contradiction.

\end{proof}

\begin{cor}
An infinite distributive lattice $L$ is isomorphic to $\mathcal{T}_{\mathrm{p}}(Q)$ for some $Q\in \mathcal{Q}$ 
if and only if there is a poset inclusion $\tau^{-1}:L\rightarrow L$ which induces a quiver inclusion $\overrightarrow{L}\rightarrow \overrightarrow{L} $ and satisfies the conditions $(c_{0})\sim(c_{5})$. 
\end{cor}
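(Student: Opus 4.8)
The plan is to derive this corollary from Theorem~\ref{t2} by decoupling the abstract lattice $L$ from the extra datum $\tau^{-1}$, treating the latter existentially and transporting all auxiliary structure along a lattice isomorphism. The two implications are nearly symmetric, so the only genuine content beyond quoting Theorem~\ref{t2} will be an invariance check.

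For the implication ``$\Leftarrow$'', suppose there is a poset inclusion $\tau^{-1}:L\to L$ inducing a quiver inclusion $\overrightarrow{L}\to\overrightarrow{L}$ such that $(L,\tau^{-1})$ satisfies $(c_{0})\sim(c_{5})$. I would first record that, granted $(c_{0})\sim(c_{4})$, such a $\tau^{-1}$ is automatically a lattice inclusion: the coordinate realization of Proposition~\ref{p1} and Corollary~\ref{c2} embeds $L$ as a sublattice of $(\mathbb{Z}^{\Lambda},\leq^{\mathrm{op}})$ in which $\tau^{-1}$ acts by $(\tau^{-1}x)_{\lambda}=x_{\lambda}+1$, and adding a constant to every coordinate commutes with coordinatewise $\min$ and $\max$, so $\tau^{-1}$ preserves $\vee$ and $\wedge$. (This avoids circularity because the parts of Proposition~\ref{p1}/Corollary~\ref{c2} used here rely only on $\tau^{-1}$ being an order-embedding that induces a quiver inclusion, together with $(c_{0})\sim(c_{4})$, not on $\tau^{-1}$ preserving meets and joins a priori.) Hence $(L,\tau^{-1})$ meets the hypotheses of Theorem~\ref{t2}, and the implication $(b)\Rightarrow(a)$ there yields $(L,\tau^{-1})\simeq(\mathcal{T}_{\mathrm{p}}(Q),\tau_{Q}^{-1})$ with $Q=\overline{Q(L,\tau^{-1})}\in\mathcal{Q}$. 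Forgetting the inner maps gives $L\simeq\mathcal{T}_{\mathrm{p}}(Q)$ as lattices, as required.

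For ``$\Rightarrow$'', suppose $\rho:\mathcal{T}_{\mathrm{p}}(Q)\to L$ is a lattice isomorphism for some $Q\in\mathcal{Q}$. Since $L$ is infinite, $\mathcal{T}_{\mathrm{p}}(Q)$ is an infinite distributive lattice, so Theorem~\ref{t1} forces $Q$ to satisfy $(\mathrm{C})$; then Lemma~\ref{l0} shows $(\mathcal{T}_{\mathrm{p}}(Q),\tau_{Q}^{-1})$ satisfies $(c_{0})\sim(c_{4})$, and applying $(a)\Rightarrow(b)$ of Theorem~\ref{t2} to the pair $(\mathcal{T}_{\mathrm{p}}(Q),\tau_{Q}^{-1})$ (which satisfies $(a)$ tautologically) supplies $(c_{5})$ as well. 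I would then transport the inner map by setting $\tau^{-1}:=\rho\circ\tau_{Q}^{-1}\circ\rho^{-1}$; being the conjugate of a lattice inclusion by a lattice isomorphism it is a lattice, hence poset, inclusion of $L$, and it induces a quiver inclusion of $\overrightarrow{L}$ since $\rho$ identifies $\overrightarrow{\mathcal{T}_{\mathrm{p}}(Q)}$ with $\overrightarrow{L}$. Thus $\rho$ is an isomorphism of pairs $(\mathcal{T}_{\mathrm{p}}(Q),\tau_{Q}^{-1})\to(L,\tau^{-1})$, and it remains to conclude that $(L,\tau^{-1})$ inherits $(c_{0})\sim(c_{5})$.

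The one point deserving care, and the step I expect to be the main (if routine) obstacle, is the isomorphism-invariance of $(c_{0})\sim(c_{5})$, since these are phrased through several auxiliary gadgets that must be tracked through $\rho$. I would verify invariance by noting that every such gadget is functorial in the pair: an isomorphism of pairs carries $P=L\setminus\tau^{-1}L$ onto its counterpart (so $(c_{0}),(c_{1})$ transport), respects the order and the $\tau^{-1}$-translates (so $(c_{2}),(c_{3})$ transport), and preserves the arrow-equivalence $\sim$ generated by $\alpha\sim\tau^{-1}\alpha$ and the squares $S(\alpha,\beta)$, hence induces a bijection $\Lambda(\mathcal{T}_{\mathrm{p}}(Q),\tau_{Q}^{-1})\to\Lambda(L,\tau^{-1})$ matching minimal representatives (so $(c_{4})$ transports). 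Finally the induced poset $(\Lambda,\leq)$, the minimal elements $x(\lambda)$, their successors $y(\lambda)$, and therefore the graphs $G$ and $G^{'}$ are defined purely from this data, so the inclusion $G^{'}_{1}\subset G_{1}$ transports and $(c_{5})$ holds for $(L,\tau^{-1})$. This completes both directions; everything outside the invariance bookkeeping is an immediate appeal to Theorems~\ref{t1} and \ref{t2} and to Lemma~\ref{l0}.
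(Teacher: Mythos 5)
Your proposal is correct and follows what is evidently the paper's intended route: the paper states this corollary with no proof at all, treating it as immediate from Theorem~\ref{t2} together with Theorem~\ref{t1} and Lemma~\ref{l0}, which is exactly the skeleton you use. The only substance you add beyond that is the bridging observation that, under $(c_{0})\sim(c_{4})$, a poset inclusion inducing a quiver inclusion is automatically a lattice inclusion (via the coordinate embedding of Proposition~\ref{p1} and Corollary~\ref{c2}, whose proofs indeed never use meet/join preservation of $\tau^{-1}$) and the routine isomorphism-invariance of the conditions — both of which are needed to reconcile the corollary's weaker hypothesis with the hypotheses of Theorem~\ref{t2}, and both of which you handle correctly.
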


\end{document}